\theoremstyle{plain}
\newtheorem{theorem}{Theorem}[chapter]
\newtheorem{lemma}[theorem]{Lemma}
\newtheorem{corollary}[theorem]{Corollary}
\newtheorem{conjecture}[theorem]{Conjecture}
\newtheorem{definition}[theorem]{Definition}
\theoremstyle{definition}
\newtheorem{remark}[theorem]{Remark}
\newtheorem{example}[theorem]{Example}
\newtheorem{construction}[theorem]{Construction}
\newtheorem{claim}[theorem]{Claim}
\newtheorem{rules}[theorem]{Rule}
\newcommand{\HRule}{\rule{\linewidth}{0.5mm}}
\begin{document}
  \begin{titlepage}
  \begin{center}
    \textsc{\LARGE University of Durham} \\[1.5cm]

    \textsc{\Large Final year project} \\[0.5cm]

    \HRule \\[0.4cm]
    
    { \huge Empire Maps on Surfaces } \\[0.4cm]
    
    { \large An exploration into colouring empire maps on the sphere and higher genus surfaces } \\[0.4cm]

    \HRule \\[1.5cm]
    
\begin{minipage}{0.4\textwidth}
\begin{flushleft} \large
\emph{Author:}\\
Caspar de Haes
\end{flushleft}
\end{minipage}
\begin{minipage}{0.4\textwidth}
\begin{flushright} \large
\emph{Supervisor:} \\
Dr Vitaliy Kurlin
\end{flushright}
\end{minipage}

    \vfill
    
    \begin{figure}[h]
	  \begin{center}
	  	\includegraphics[scale=0.3]{./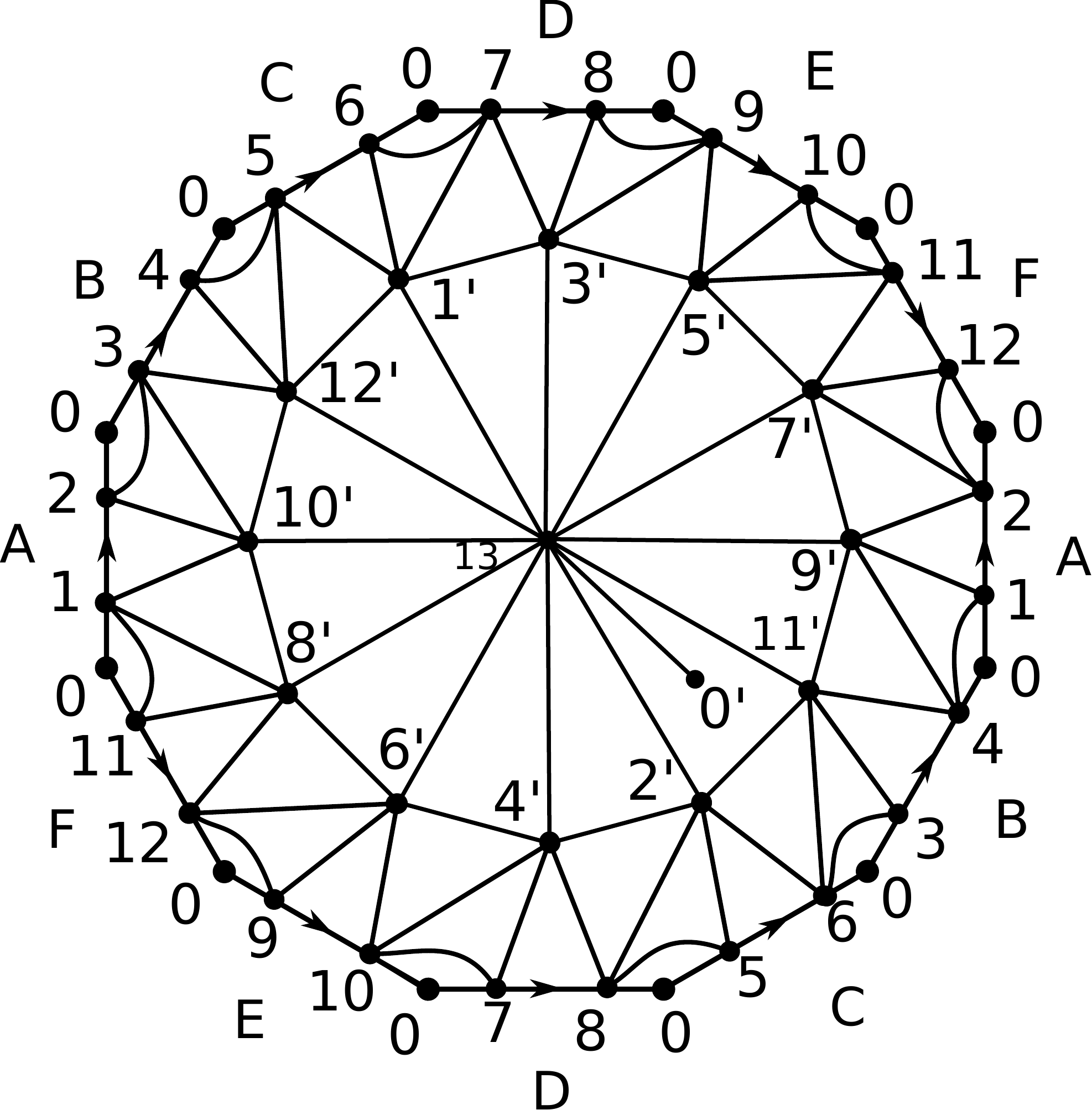}
	  \end{center}
	\end{figure}
	
	\vfill
  
    {\large April 28, 2011 }

  \end{center}
\end{titlepage}
  
  \begin{abstract}
    This report is an introduction to mathematical map colouring and the problems posed by Heawood in his paper of 1890.
	
	There will be a brief discussion of the Map Colour Theorem; then we will move towards investigating empire maps in the plane and the recent contributions by Wessel. Finally we will conclude with a discussion of all known results for empire maps on higher genus surfaces and prove Heawood’s Empire Conjecture in a previously unknown case.
  \end{abstract}
  
  \tableofcontents

  \chapter{Introduction} \label{chp:introduction}
    \section{Overview of the Problem}
  To get a feel for the problems discussed in this report is very simple; take an as yet un-coloured map of the counties of England and find yourself a box of coloured pencils. How many colours do you need to colour the map?

  Now, to make things clear, you could colour each county a different colour. However even with the counties of England one might struggle to have enough colours to hand, and consider what would happen if I gave you a map of the world! So this isn’t very practical, how about if we just coloured neighbouring counties a different colour?

  This should then reduce our problem, but how do we decide when two counties are neighbouring? What if they share any point in common? Well this is one possibility and indeed it would work fine for the counties of England, however consider a map where say 100 counties all met at a single point (like the centre of a spiral), you would then need 100 colours! This seems a little more far fetched but certainly it would be easier if one didn't have to consider this possibility.
  
  So instead we consider neighbouring counties to be ones that have a common line, this seems more sensible, how many colours do we need now? (The colouring in Figure~\ref{fig:counties-of-england} uses 4, can you do it with fewer colours?) Well this is the very problem we will be discussing in this report. Well, almost! We are not going to be considering the counties of England but instead we will be taking the role of a cartographer who has the job of travelling around colouring any possible type of map that people want him to. Since he travels a lot, he would like to carry round as few colours as possible, so given any possible map of the world that anyone could ever think of, how many colours will he need? It is these kind of questions that this report will be discussing.
	
	\begin{figure}[htb]
	  \begin{center}
		\includegraphics[scale=0.4]{./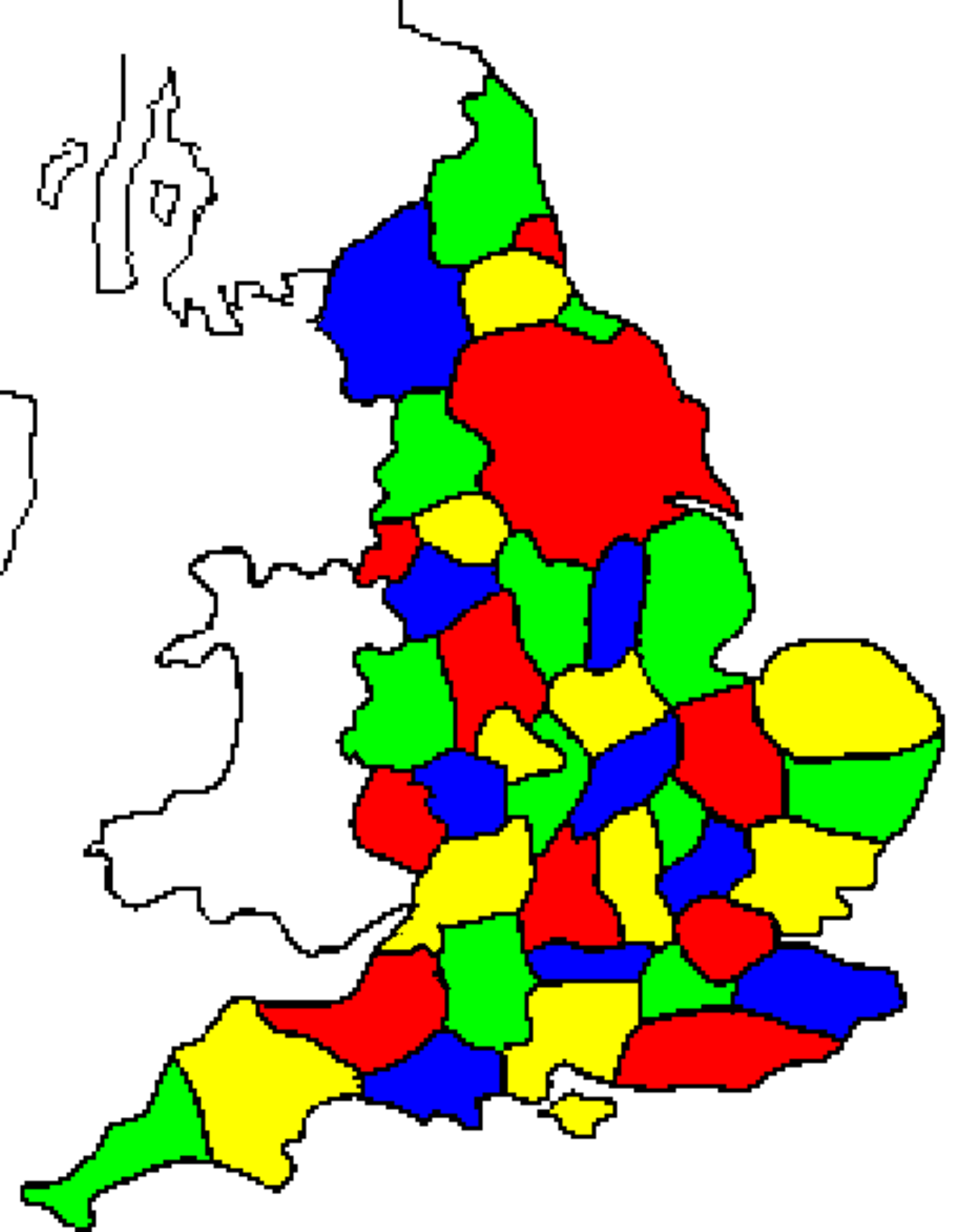}
	  \end{center}
	  \caption{A colouring of the counties of England using 4 colours. Copyright Mark Berry, \cite{windmill}.}
	  \label{fig:counties-of-england}
	\end{figure}

\section{History of the Problem} \label{sec:history}
  Map Colouring as a mathematical problem has a rather convoluted beginning. Initially it was noticed by Francis Guthrie when colouring a map of the counties of England, but Francis had no mathematical training (at the time) and so passed it onto his brother Frederick Guthrie. At the time Frederick was a student of de Morgan in London and so he posed the problem to de Morgan who in turn then mentioned it in a letter to a fellow mathematician of the time, William Hamilton:
  \begin{quote}
    ``If a figure be anyhow divided. . . four colours may be wanted but not more"
  \end{quote}
  However from this point on, neither of the Guthrie brothers, de Morgan nor Hamilton had anything to do with the problem! The conjecture was first published by Cayley in his paper of 1879, although he did attribute the conjecture to de Morgan.
  
  The first ``proof" was also published in 1879 by Alfred Kempe and it met with wide acclaim and it wasn't until 11 years later in 1890 that Heawood highlighted fatal flaws in Kempe's proof.
  
  In his paper of 1890 \cite{heawood}, Heawood found faults in Kempe's proof but was unable to fix them and so instead he tried to explore other areas of map colouring; it is these ideas that Heawood thought of back then that will form the basis for our discussions in this report. It was a prolific paper in which Heawood posed several extensions including:
  \begin{itemize}
    \item What if we change the surface? (i.e. a doughnut instead of a sphere)
    \item What if we allow ``empires" or colonies? (such as USA and Alaska)
    \item What if we allow colonies on another body such as the moon?
  \end{itemize}
  In fact Heawood managed to make considerable progress in many of these areas in the same paper, producing an upper bound on the number of colours required to in both the first two cases.
  
  Work on the matter took somewhat of a back seat for a while, with small contributions coming here are there but largely indirectly. It wasn't until Ringel arrived on the scene in the mid 20\textsuperscript{th} century that results were finally achieved. In fact Ringel was very prolific in the area proving a result for one class of surfaces (non-orientable) in 1954, then another (orientable) in 1968 with the help of Youngs but it took Appel and Haken until 1976 to complete the last case, the one where it all had started, the sphere. Even then they required the use of computers which was revolutionary at the time and which many people distrusted although the proof has gained wider acceptance over the years.
  
  All of the work mentioned above focused on the classic problem as posed by Francis Guthrie, but the extension mentioned above which allowed ``empires" also proved to be a rich area of mathematics. Once again Heawood provided an upper bound in 1890 but this time it wasn't until 1984 that a proof that the upper bound was sharp was finally given, again by the prolific Ringel, this time with the help of Jackson. They conclusive settled the empire problem on the sphere, however Heawood had even thought about combining both the first and the second change to allow empire maps on different surfaces and this problem has yet to be solved in its entirety. It is this unknown area which we will be working towards in this report and providing a proof of a previously unknown case.

\section{Chapter Plan}
  A brief summary of each of the chapters in this report is as follows:
  \begin{enumerate} [(a)]
    \item Chapters \ref{chp:graphs}- \ref{chp:colouring} concentrate on a mathematical introduction to graphs, surfaces and maps
    \item Chapter \ref{chp:heawood-higher-genus} proves Heawood’s upper bound for maps on higher genus surfaces
    \item Chapter \ref{chp:empire-plane} introduces empire maps and looks at Hutchinson’s proof and includes the proof of an essential claim that was left in \cite[p. 215]{hutchinson} as an exercise
	\item Chapter \ref{chp:empire-conjecture} provides a complete review of Wessel’s recent contribution to empire maps in the plane
	\item Chapter \ref{chp:torus} provides an overview of all known results for empire maps on higher genus surfaces and also proves Heawood’s Empire Conjecture in the previously unknown case of 2-pire maps on a triple torus
	\item Chapter \ref{chp:conclusion} is a brief conclusion highlighting key points from the report
	\item Appendix \ref{app:tools} provides some tools and idea
  \end{enumerate}

  \chapter{Graphs} \label{chp:graphs}

Before we can embark upon our exploration into map colouring we must first set up some mathematical objects that will allow us to discuss more precisely exactly what we mean by a map and colouring. We start by discussing mamthematical objects called graphs which will ultimately provide us with a means of mathematically describing a map.

\section{Terminology and Notation}
  We begin by defining an abstract combinatorial graph (not to be confused with a ``normal" graph with axis etc.):
  \begin{definition} \label{def:combinatorial-graph}
	  A finite general \textbf{graph} $G$ consists of three things:
	  \begin{itemize}
	    \item A finite set of \textit{vertices}
	    \item A finite set of \textit{edges}
	    \item An \textit{incidence} relation between vertices and edges
	  \end{itemize}
	  If $v$ a vertex and $e$ an edge and $(v,e)$ is a pair in the incidence relation, then we say that $v$ is incident with $e$ and $e$ is incident with $v$.
	\end{definition}

  \begin{remark}
    We will be dealing entirely with \emph{undirected graphs}, that is to say an edge between vertices $v$ and $w$ is equivalent to an edge between vertices $w$ and $v$. Thus for the purposes of this report a \emph{graph} will always refer to a \emph{finite undirected graph}.
  \end{remark}
  
	We are now in a position to cover some basic terminology relating to graphs:
	\begin{definition}
	  Two vertices $v, w$ with $v \neq w$ are \textbf{adjacent} if they are both incident to a common edge.
	\end{definition}
	\begin{definition}
		A \textbf{loop} is an edge which is incident with only one vertex.
	\end{definition}
	\begin{definition}
		A graph is said to have a \textbf{multiple edge} (or a \textbf{double edge}, \textbf{triple edge}, etc.) if there exists two vertices $v$ and $w$ such that the number of edges incident to both $v$ and $w$ is greater than 1.
	\end{definition}
	
	One common notion that consequently appears as a result of the above definitions is that of a simple graph:
	\begin{definition}
		If a graph contains no loops and no multiple edges then the graph is said to be a \textbf{simple graph}.
	\end{definition}
	
	\begin{figure}[htb]
	  \begin{center}
		\includegraphics[scale=0.7]{./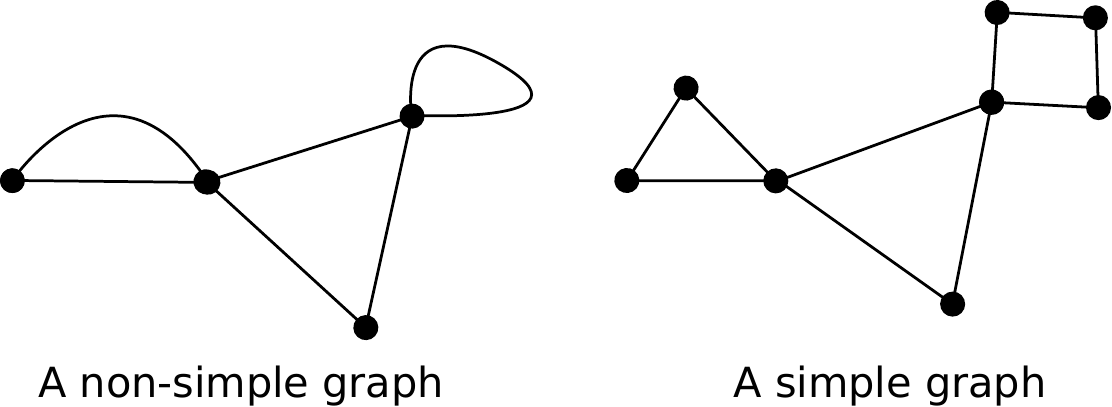}
	  \end{center}
	  \caption{An example of the differences between simple and non-simple graphs.}
	  \label{fig:simple-graph}
	\end{figure}
	
	\begin{example}
	  Figure~\ref{fig:simple-graph} shows two graphs. The left-hand graph demonstrates a non-simple graph since it contains a loop and a double edge, although only one of these features would be enough to prevent the graph from being simple. The right-hand graph is simple since it contains no loops and no multiple edges.
	\end{example}
	
	We will also be interested in how many edges are attached to each vertex:
	\begin{definition}
		For a vertex $v$ the \textbf{degree} of $v$ (denoted $\deg(v)$) is defined as the number of edges which $v$ is incident with, with the exception of a loop where the edge which forms the loop is counted twice.
	\end{definition}
	
	\begin{figure}[htb]
	  \begin{center}
		\includegraphics[scale=0.4]{./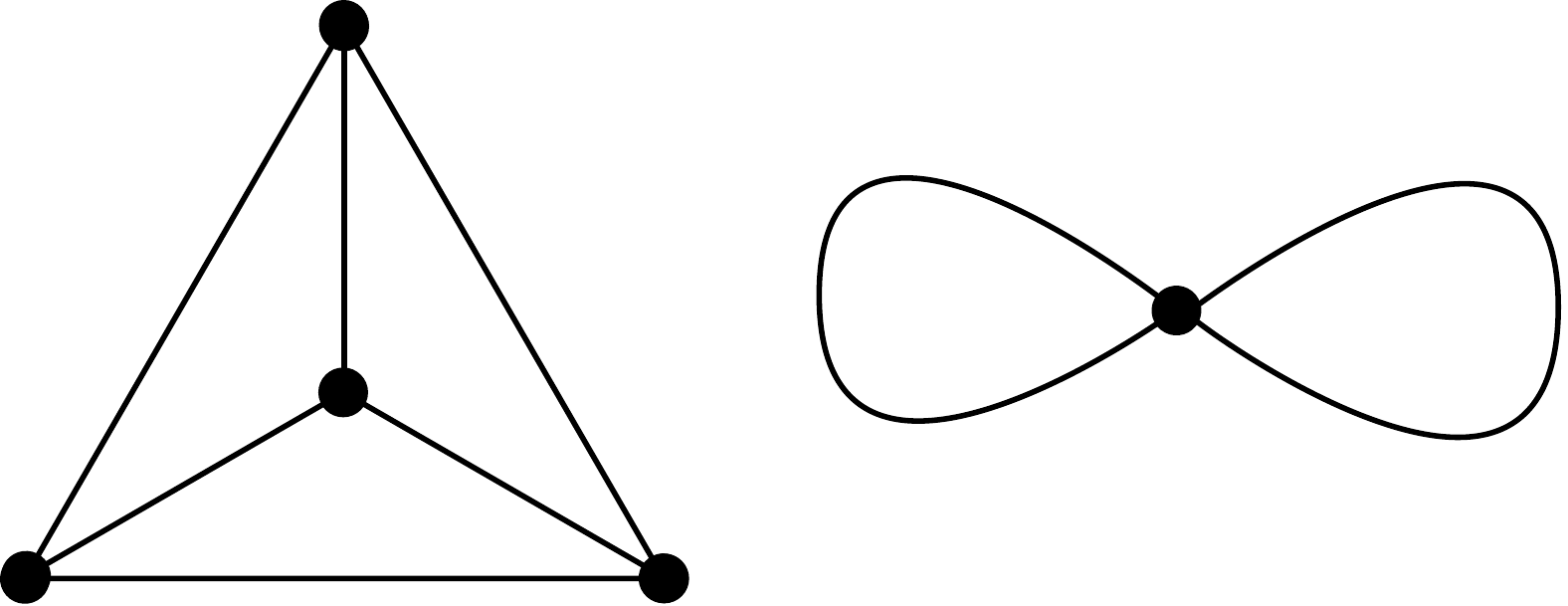}
	  \end{center}
	  \caption{In the left-hand graph all vertices have degree 3 and on the right-graph there is a single vertex of degree 4.}
	  \label{fig:vertex-degree}
	\end{figure}
	
	\begin{definition} \label{def:trivial-vertex}
		In a graph $G$ a vertex $v$ is called a \textbf{trivial vertex} if it has exactly two edges incident to it and has degree 2.
	\end{definition}
	
	\begin{figure}[htb]
	  \begin{center}
		\includegraphics[scale=0.7]{./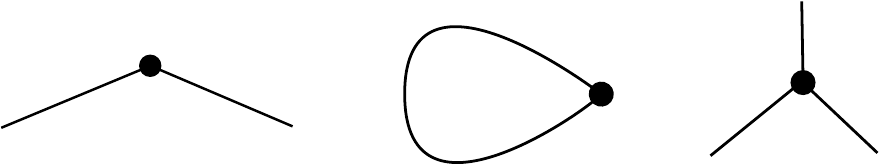}
	  \end{center}
	  \caption{Trivial vertex, non-trivial vertex and non-trivial vertex respectively.}
	  \label{fig:trivial-vertices}
	\end{figure}
	
	Recall that a homeomorphism is a bijective bicontinuous function and that if a homeomorphism exists between two topological objects then they are said to be homeomorphic. Thus we will consider a topological graph which is a way of thinking about a graph as a topological object, namely we let each edge be homeomorphic to an arc and vertices as the endpoints of arcs. There are two immediate consequences when thinking about topological graphs:
	\begin{remark}
	  Vertices are single points and edges are just arcs which are (infinite) collections of single points, so in reality an embedding would not have large circles for the vertices as is the case in our pictures. I will however persist with using such representations because it provides a much clearer picture.
	\end{remark}
	\begin{remark} \label{rem:no-trivial-vertices}
	  We do not have trivial vertices on a topological graph since a vertex incident with only two edges is equivalent to a single edge which is the union of the two edges and the trivial vertex.
	\end{remark}

\section{Complete Graphs} \label{sec:complete-graphs}
	We will define a common set of graphs called complete graphs:
	\begin{definition}
		A \textbf{complete graph on $n$ vertices} (denoted $K_n$) is a graph consisting of $n$ vertices such that every vertex is connected to every other vertex by a single edge.
	\end{definition}
	
	\begin{definition}
		A \textbf{complete bipartite graph} is a graph whose vertex set is partitioned into two disjoint sets such that every vertex in one set is joined to every vertex in the other set by a single edge, if the sizes of the two sets are $m, n$ then the graph is denoted $K_{m,n}$.
	\end{definition}
	
	\begin{figure}[htb]
	  \begin{center}
			\includegraphics[scale=0.7]{./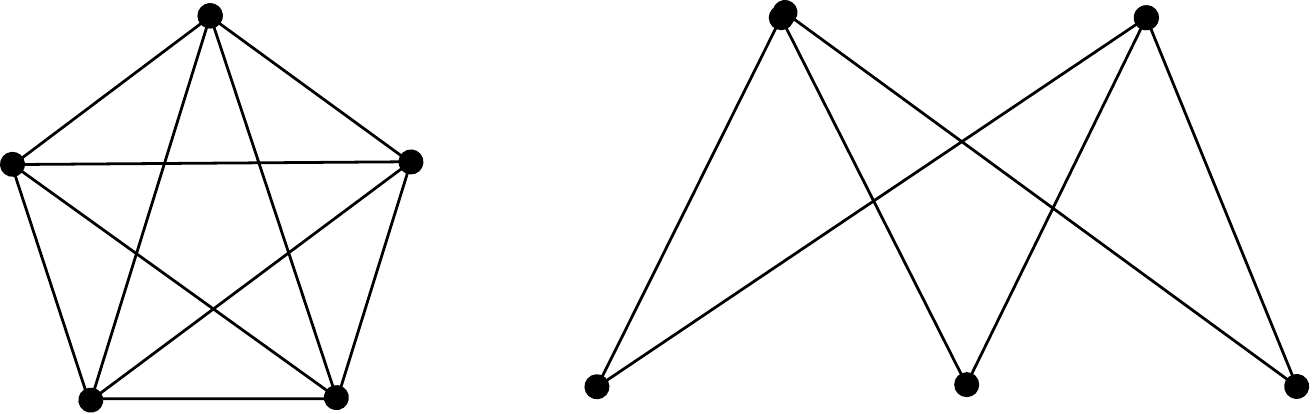}
	  \end{center}
	  \caption{$K_5$ (complete graph) and $K_{3,2}$ (complete bipartite graph).}
	  \label{fig:complete-and-bipartite}
	\end{figure}
  
  \chapter{Surfaces} \label{chp:surfaces}

We now introduce the concept of a surface onto which we can draw our maps. We will initially introduce the formal definition of a surface and then move onto ways to distinguishing between surfaces and calculating properties for each surface.

\section{Surfaces and Embeddings}
  We start by introducing an abstract surface which informally is a 2-dimensional object which looks everywhere locally like the plane:
  \begin{definition}
    A Hausdorff topological space $X$ is called a \textbf{surface} (without boundary) if at every point in $a \in X$ there exists an open neighbourhood which is homeomorphic to an open disc $\{(x,y) \in \mathbb{R}^2 | x^2+y^2 < 1\}$.
  \end{definition}
  \begin{remark}
    If a surface is closed and bounded then it is called a compact surface and for the purposes of this report we will only be considering compact surfaces thus from now on ``surface" will refer to a ``compact surface".
  \end{remark}
  
  We can then discuss the concept of an embedding of a graph on a surface $S$ as found in \cite[Section 1.3]{BWGT} recalling that a homeomorphism is a bijective bicontinuous function:
  \begin{definition}
		An \textbf{embedding} of a graph $G$ on a surface $S$  is a continuous function $f:G \to S$ taking $G$ homeomorphically to its image $f(G)$. \newline
	\end{definition}
	Intuitively an embedding is a drawing of a graph on a surface in which no edges cross.
	
	\begin{figure}[htb]
	  \begin{center}
	  	\includegraphics[scale=0.6]{./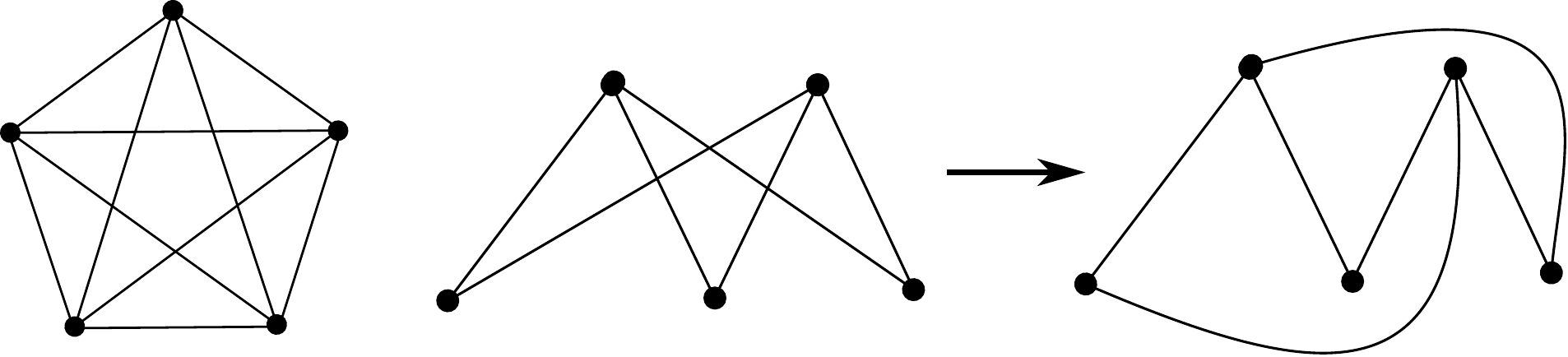}
	  \end{center}
	  \caption{$K_5$ and $K_{3,2}$ which cannot and can be embedded in the plane respectively.}
	  \label{fig:embeddable-graphs}
	\end{figure}
	
	\begin{example}
	  Figure \ref{fig:embeddable-graphs} shows two graphs we have seen before, $K_5$ and $K_{3,2}$. We can see that the natural representation of $K_{3,2}$ is not the image of an embedding in the plane since there are edges which cross, however we use the fact that we can deform these edges so that we obtain a representation where there are no edges which cross, thus $K_{3,2}$ is embeddable in the plane. We see also that the natural representation of $K_5$ is also not the image of an embedding but it is harder to see how to deform the edges to obtain one and in fact we shall see later on that no such embedding exists.
	\end{example}
	
	One question that springs to mind is whether all maps can be embedded into a surface - the answer is quite clearly no because for more complicated graphs with many edges it will become impossible to draw them on a simple surface without them crossing. However we will be interested largely in graphs which \textit{can} be embedded in a surface:
	\begin{definition}
	  A graph $G$ is called \textbf{planar} if it can be embedded in the plane.
	\end{definition}
	\begin{remark}
	  Embedding a graph in the plane is equivalent to embedding a graph on on the sphere.
	\end{remark}
	
	We can also now define the natural counterpart to vertices and edges which is the space on the surface enclosed by them:
	\begin{definition}
		The connected components of the complement of the image of an embedded graph are called \textbf{countries}.
	\end{definition}

\section{Euler Characteristic}
  We can now introduce a different way of thinking about a surface in a way that will make it easier for us to calculate the properties of the surface and compare it to other surfaces:
  \begin{definition}
  	A \textbf{combinatorial surface} is a surface $S$ with a fixed embedded graph $G$ such that each connected component in $S \setminus G$ is a topological open disk with at least 3 sides. A combinatorial surface can be denoted as the pair $(S,G)$.
  \end{definition}
  
  We can then introduce a concept called the Euler characteristic which is a way of assigning to any surface an integer which can then be used to distinguish between surfaces:
  \begin{definition} \label{def:combinatorial-surface}
    For a combinatorial surface $(S,G)$ we define the \textbf{Euler characteristic}, $\chi(S)$, to be:
    \begin{equation}
      \chi(S) = V - E + C
    \end{equation}
    Where $V, E, C$ are the number of vertices, edges and countries in $G$ respectively.
  \end{definition}
  
  We then provide the following theorem without proof:
  \begin{theorem}
    The Euler characteristic of a combinatorial surface, $(S, G)$ is independent of the choice of embedded graph, $G$.
  \end{theorem}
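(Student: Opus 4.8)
The plan is to fix the surface $S$ and prove that the integer $V - E + C$ takes the same value for any two admissible embedded graphs $G_1$ and $G_2$. I would do this by producing a single embedded graph $H$ that simultaneously \emph{refines} both $G_1$ and $G_2$, and by showing that the refinement process alters neither $\chi(S, G_1)$ nor $\chi(S, G_2)$; it then follows immediately that $\chi(S, G_1) = \chi(S, H) = \chi(S, G_2)$.

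First I would isolate a short list of \emph{elementary moves} on an embedded graph and verify that each preserves $V - E + C$. The moves are: \textbf{edge subdivision}, inserting a new vertex in the interior of an edge, which sends $V \mapsto V+1$ and $E \mapsto E+1$ with $C$ unchanged; \textbf{chord insertion}, drawing a new arc through the interior of a single country between two vertices already on its boundary, which splits that country and sends $E \mapsto E+1$ and $C \mapsto C+1$ with $V$ unchanged; and \textbf{pendant attachment}, joining an existing vertex to a new vertex placed in the interior of a country by an arc lying inside that country, which sends $V \mapsto V+1$ and $E \mapsto E+1$ with $C$ unchanged, since a non-separating arc leaves the number of countries fixed. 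In every case the net change in $V - E + C$ is $0$, and the same holds for the inverse moves.

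Next I would construct the common refinement. After an arbitrarily small perturbation of $G_2$ that leaves it homeomorphic to its image, the edges of $G_2$ meet the edges of $G_1$ transversally in finitely many points. Taking the superposition $G_1 \cup G_2$ and declaring every crossing point to be a vertex yields an embedded graph $H$. Reading $H$ as assembled from $G_1$: each crossing point is introduced by an edge subdivision of $G_1$; each vertex of $G_2$ lying inside a country of $G_1$ is introduced, together with one incident arc, by a pendant attachment; and every remaining sub-arc of $G_2$, which lies inside a single country of $G_1$ and joins two points now present as vertices, is introduced by a chord insertion. Thus $H$ is reached from $G_1$ by a finite sequence of the elementary moves, and symmetrically from $G_2$, so applying the invariance of the previous step twice gives $\chi(S, G_1) = \chi(S, H) = \chi(S, G_2)$.

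The main obstacle will be making the common-refinement step fully rigorous: one must guarantee that $G_2$ can be perturbed into \emph{general position} relative to $G_1$ so that the two images meet in only finitely many transversal points, and that the superposition $G_1 \cup G_2$ is again a legitimate embedded graph whose countries are open discs. This is a standard general-position (or simplicial-approximation) argument on a surface, but it is the genuine topological heart of the proof; once it is granted, the bookkeeping with the elementary moves is routine. An alternative that avoids the perturbation is to refine each $G_i$ to a triangulation and invoke the theorem that any two triangulations of a surface admit a common subdivision, but this merely relocates the same topological input rather than removing it.
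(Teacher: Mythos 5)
The paper has no proof for you to be compared against: it states this result explicitly \emph{without proof}, so your proposal supplies an argument the report simply omits. What you have written is the standard common-refinement (superposition) proof, and in outline it is sound: the three elementary moves do each preserve $V - E + C$, and once a common refinement $H$ of $G_1$ and $G_2$ exists, the conclusion follows exactly as you say. One point deserves to be made explicit rather than left implicit: the count $\Delta C = +1$ for chord insertion is true \emph{because} each complement component is an open disc (an arc joining two boundary points of a disc separates it into exactly two discs); for a complement component that was, say, an annulus, a chord can fail to separate and the move would change $V - E + C$. This is precisely where the hypothesis in the paper's definition of a combinatorial surface does real work, and your proof should say so. Relatedly, note that your intermediate graphs (after a pendant attachment, or with countries bounded by fewer than three sides) are not combinatorial surfaces in the paper's sense, so you should phrase your invariance lemma for arbitrary embedded graphs whose complement components are open discs, and only at the end observe that $G_1$, $G_2$ and $H$ all qualify.

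Two smaller gaps to close in the refinement step. First, a sub-arc of $G_2$ may be a closed loop based at a single point inside a country of $G_1$ (e.g.\ a loop edge of $G_2$ missing $G_1$ entirely); your ``chord insertion'' must allow the two boundary endpoints to coincide, which still splits the disc and still gives $\Delta(V-E+C)=0$, but it is a case your move list does not literally cover. Second, the moves must be ordered so that each sub-arc is attached only after both its endpoints are already vertices of the partial graph (subdivide all of $G_1$'s edges at crossing points first, then add the arcs of $G_2$ one at a time, using pendant attachment when an endpoint is new); as stated, your description of pendant attachment quietly assumes this ordering. Your closing admission that general position is the genuine topological content is accurate, and your alternative via a common subdivision of two triangulations does, as you note, rest on the same input --- in the topological category both routes ultimately lean on the triangulability of surfaces, so neither avoids it.
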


\section{Orientability}
  We next introduce the concept of orientability. We begin with an abstract definition of orientability:
  \begin{definition}
    A combinatorial surface $(S, G)$ is \textbf{orientable} if for any two connected components in $S \setminus G$ which share a common edge you can choose a compatible orientation. That is for each edge in the perimeter pick a consistent direction such that for the common edge the directions are opposite.
  \end{definition}
	
	\begin{figure}[htb]
	  \begin{center}
	  	\includegraphics[scale=0.6]{./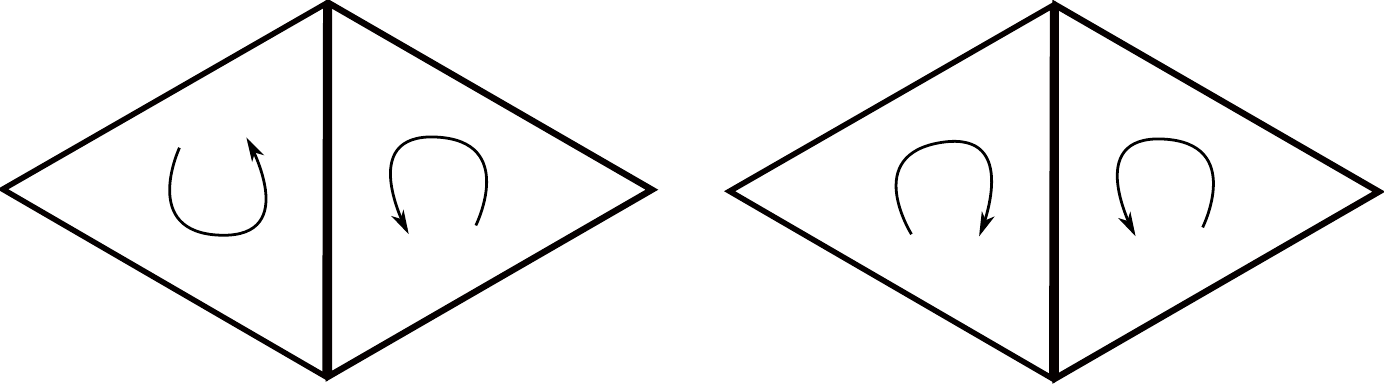}
	  \end{center}
	  \caption{An example of a choice of orientation which is acceptable (LHS) and not acceptable (RHS).}
	  \label{fig:orient-triangulation}
	\end{figure}
  
  Figure~\ref{fig:orient-triangulation} shows the two possibilities for orienting components that share a common edge with the one on the left being a compatible orientation whereas the one of the right is not a compatible orientation.
  \begin{remark}
    For orientable surfaces it will always be possible to orient them and for non-orientable surfaces it will never be possible.
  \end{remark}
  
\section{Classification of surfaces} \label{sec:classification-of-surfaces}
  We are aiming to find a complete topological invariant of surfaces, that is to say some property which satisfies two conditions:
  \begin{enumerate} [(i)]
    \item If any two surfaces have possess this property they are equivalent
    \item If one surface possesses the property and one doesn't then they are not equivalent
  \end{enumerate}
  This way we know exactly what surfaces we are dealing with at any point in time. For this we introduce the concept of ``combining" surfaces:
  \begin{definition}
    For two surfaces $S$ and $T$ we define the \textbf{connected sum} (denoted $S\#T$) as the surface obtained by removing a disc from both $S$ and $T$ and then ``gluing" them back together along the new boundaries.
  \end{definition}
  
  We then need to introduce a couple of special surfaces, the first of which is the torus, or as it is more commonly known a doughnut. It can be thought of in 3 ways:
  \begin{enumerate}
    \item A sphere with a handle attached
    \item A doughnut
    \item The quotient of a square
  \end{enumerate}
  The last one of these is the least intuitive but if you look at Figure~\ref{fig:making-torus-quotient} you can see the process of representing the torus as the quotient of a square. Essentially you start with a square where the sides have a direction and a label, then for sides with the same label you attach them so that the direction matches. So initially we attach to the two edges labelled A which gives us a hollow cylinder. We then attach the two sides labelled B which are now the ends of the cylinder by wrapping it round to make a doughnut shape and thus we have the torus.
	
  \begin{figure}[htb]
    \begin{center}
   	  \includegraphics[scale=0.5]{./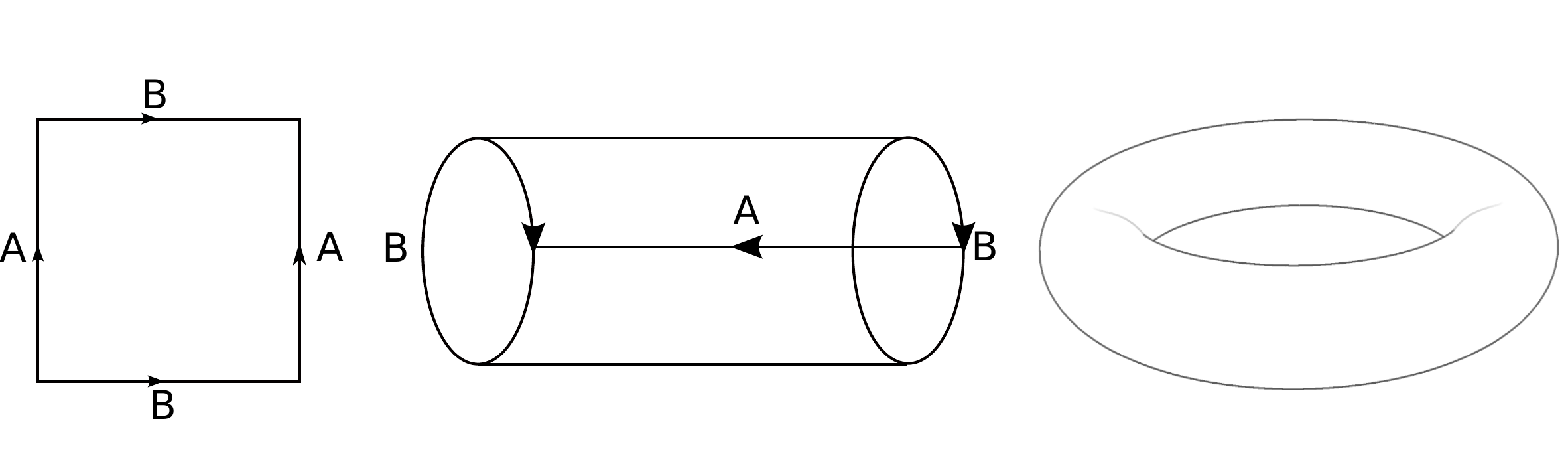}
	\end{center}
	\caption{Representing the torus as the quotient of a square.}
	\label{fig:making-torus-quotient}
  \end{figure}
  
  The second shape we are going to introduce is the projective plane, this is a non-orientable surface which cannot actually be drawn in 3 dimensions without self-intersecting (it can be drawn in 4-dimensions without self intersecting) but to give an idea of what it looks like we give two representations as shown in Figure~\ref{fig:projective-plane-representations}, the first is an attempt at a drawing in 3 dimensions and the second is representing it as the quotient of a square in the same way as we did with the torus, I have not included the process of actually making it since it can't be accurately drawn!
	
  \begin{figure}[htb]
    \begin{center}
   	  \includegraphics[scale=0.2]{./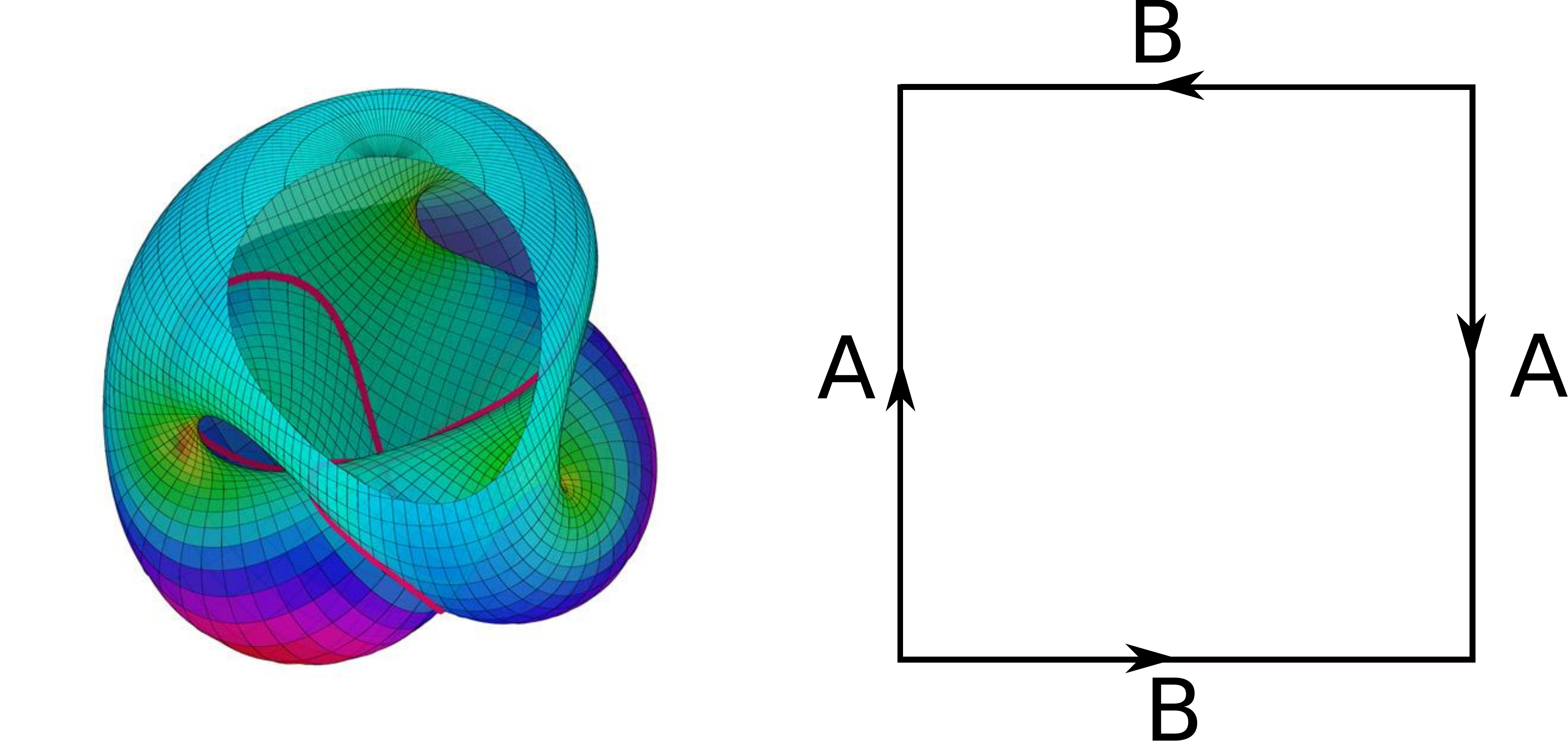}
	\end{center}
	\caption{Two representations of the projective plane. The left-hand image is taken from Surrey University \cite{surrey-maths}}
	\label{fig:projective-plane-representations}
  \end{figure}
  
  We then state (without proof) that all surfaces are equivalent to one of the following:
  \begin{enumerate}
    \item The sphere
    \item The connected sum of $g$ tori ($g \geq 1$)
    \item The connected sum of $n$ projective planes ($n \geq 1$)
  \end{enumerate}
  
  Furthermore we can define the Euler characteristic in another way by considering it as the sphere with tori or projective planes ``added", informally we start with the Euler characteristic of the sphere which is 2 and then for every tori we add we take 2 from the Euler characteristic and for every projective plane we take 1.
  \begin{remark}
    Whilst you can add say $g$ tori and $n$ projective planes to the sphere at the same time with both $n, g \geq 1$ the surface is homeomorphic to the surface obtained by adding $n+2g$ projective planes to the sphere. Thus we need not consider cases where we add both tori and projective planes to the sphere as separate cases from adding only projective planes.
  \end{remark}
  
  Thus you can get the following equation for the Euler characteristic:
  \begin{lemma}
    For a surface $S$ which is the connected sum of $g$ tori and $n$ projective planes we have that the Euler characteristic is given by:
    \begin{equation} \label{eqn:euler-char-genus}
      \chi(S) = 2 - 2g - n
    \end{equation}
  \end{lemma}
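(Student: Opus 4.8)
The plan is to reduce everything to a single additivity property of the Euler characteristic under connected sum, together with the values of $\chi$ on the three building-block surfaces. First I would establish the key formula
\[
  \chi(S \# T) = \chi(S) + \chi(T) - 2 .
\]
To prove this I would invoke the theorem stated above that $\chi$ is independent of the choice of embedded graph, so that I am free to put convenient combinatorial structures on $S$ and $T$ in which at least one country of each is a triangle, i.e.\ a disc with exactly three sides. The connected sum is then formed by deleting the interior of one triangular country from each surface and gluing the two surfaces along the resulting triangular boundaries, identifying the three boundary vertices in pairs and the three boundary edges in pairs.

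Counting the cells of the glued combinatorial surface is then routine. The two deleted countries disappear, so $C = C_S + C_T - 2$; the three identified pairs of boundary vertices reduce the vertex count to $V = V_S + V_T - 3$; and the three identified pairs of boundary edges give $E = E_S + E_T - 3$. Substituting into $\chi = V - E + C$, the two $-3$ terms cancel and one obtains $\chi(S \# T) = \chi(S) + \chi(T) - 2$. The point requiring care here is checking that the glued graph is still a valid combinatorial surface, with every country a disc having at least three sides; this holds because gluing along the common triangle does not alter the interiors of the remaining countries.

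Next I would compute the base values from explicit embeddings: a tetrahedron on the sphere gives $V = 4,\ E = 6,\ C = 4$, hence $\chi(\text{sphere}) = 2$; a suitable triangulation of the torus gives $\chi(\text{torus}) = 0$; and a triangulation of the projective plane gives $\chi(\text{projective plane}) = 1$. These are precisely the numerical values encoded in the informal ``subtract $2$ per torus, subtract $1$ per projective plane'' rule described just before the lemma, so this step simply makes that description rigorous.

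Finally I would conclude by induction on the number $g + n$ of summands. Writing $S$ as the connected sum of its $g$ tori and $n$ projective planes and applying the additivity formula $g + n - 1$ times yields
\[
  \chi(S) = g\cdot\chi(\text{torus}) + n\cdot\chi(\text{projective plane}) - 2(g+n-1) = n - 2g - 2n + 2 = 2 - 2g - n,
\]
with the sphere ($g = n = 0$) serving as the base case. The only genuine obstacle is the connected-sum additivity formula itself; once the gluing is arranged so that the identified boundary is a single triangle, the vertex/edge/country bookkeeping is straightforward and the remainder is arithmetic.
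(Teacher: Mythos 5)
Your proof is correct, and it is worth pointing out that the paper itself offers \emph{no} proof of this lemma: the statement is asserted immediately after the informal description ``start with the sphere at $\chi = 2$, subtract $2$ for each torus and $1$ for each projective plane,'' with no argument given. Your proposal supplies exactly the missing rigour, and along the lines that informal rule tacitly relies on: the additivity formula $\chi(S \# T) = \chi(S) + \chi(T) - 2$, proved by cell-counting after gluing along a triangular country ($V = V_S + V_T - 3$, $E = E_S + E_T - 3$, $C = C_S + C_T - 2$, so the $-3$'s cancel), the base values $\chi = 2, 0, 1$ for the sphere, torus and projective plane from explicit embeddings, and induction on the number of summands; the arithmetic $g \cdot 0 + n \cdot 1 - 2(g + n - 1) = 2 - 2g - n$ is right, and you correctly flag the one genuine point of care, namely that the glued object is still a combinatorial surface in the paper's sense (every country a disc with at least three sides), which holds since the interiors of the surviving countries are untouched. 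Two things you lean on implicitly deserve a sentence each: first, that each summand admits an embedded graph having a triangular country at all (every compact surface can be triangulated --- a fact no harder to grant than the paper's own unproved invariance theorem, so acceptable at this level); second, that deleting the interior of a triangular country is a legitimate instance of the paper's definition of connected sum, which removes an arbitrary disc --- this needs the easy remark that the construction is independent of the choice of disc up to homeomorphism. Neither is a gap by the standard of rigour this report sets for itself, and your argument turns an unproved assertion of the paper into a complete proof.
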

  
  \begin{remark}
    Throughout this report we will only be considering orientable surfaces. However since a lot of the results given later in this report also hold for non-orientable surfaces these cases will often be discussed in related literature.
  \end{remark}
  
  \begin{remark}
    The other important reason for this discussion is to convince ourselves that we are not missing any surfaces in our discussions. Or indeed to ensure that we are not duplicating any work by considering the same surface in two different ways.
  \end{remark}

  \chapter{Map Colouring} \label{chp:colouring}

We now have enough terminology to begin to define our maps in a more mathematical way, we begin by defining the concept of a map and move onto defining how to colour a map.

\section{Maps and Colouring} \label{sec:maps-intro}
  For us a map if just a more convenient and intuitive way to refer to the embedding of a graph, recall that we are only considering orientable, connected surfaces without boundary:

	\begin{definition} \label{def:map}
		A \textbf{map} is an embedding ($f:G \to S$) of a graph. We will specify ``map on a sphere" which shall refer to the embedding of a graph $G$ on a sphere.
	\end{definition}
	\begin{remark}
	  Two maps are considered equivalent if there exists a homeomorphism between their images.
	\end{remark}
	
	It is natural to think of a cartographer colouring each Country (in the typical sense, e.g. England) a different colour on a typical Map (also in the typical sense e.g. of say the world), or at least not colouring two neighbouring Countries the same colour. We will however be making one slight simplification from a general Map of the world, we will assume that each Country in the typical sense is the same as a country defined above - i.e. does not consists of ``colonies" or ``empires". For example the situation with Alaska and the USA (same country but not attached) will not be allowed. \newline
	
	We now formally define the concepts above:
	\begin{definition}
		Two countries are said to be \textbf{neighbouring countries} if they share a common edge.
	\end{definition}
	
	\begin{figure}[htb]
	  \begin{center}
			\includegraphics[scale=1]{./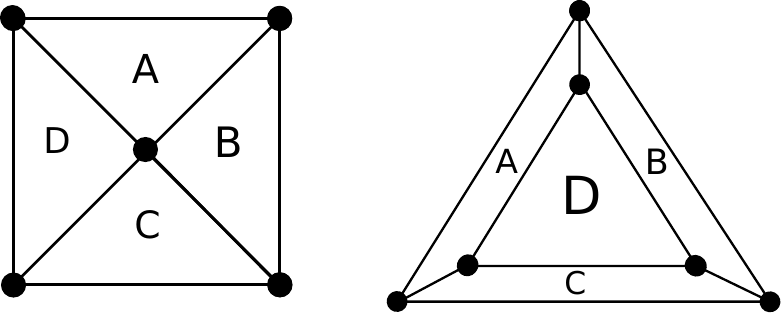}
	  \end{center}
	  \caption{Examples of different maps.}
	  \label{fig:neighbouring-example}
	\end{figure}
	
	\begin{example}
	  In Figure \ref{fig:neighbouring-example} we have two examples of maps. In the left-hand map country A neighbours countries B and D because they share a common edge but not country C since they only share a vertex. \\
	  In the right-hand map we see an example of where every country neighbours every other country.
	\end{example}

	\begin{definition}
		A \textbf{map colouring} consists of assigning to each country a colour (or equivalently a natural number) such that no country is assigned the same colour as any of its' neighbouring countries. \newline
	\end{definition}
	
	\begin{remark}
	  Note that one colour may be assigned to many different countries, providing no two of these countries are neighbouring. This does not however signify any relationship between these countries.
    \end{remark}
	
	\begin{figure}[htb]
	  \begin{center}
			\includegraphics[scale=1]{./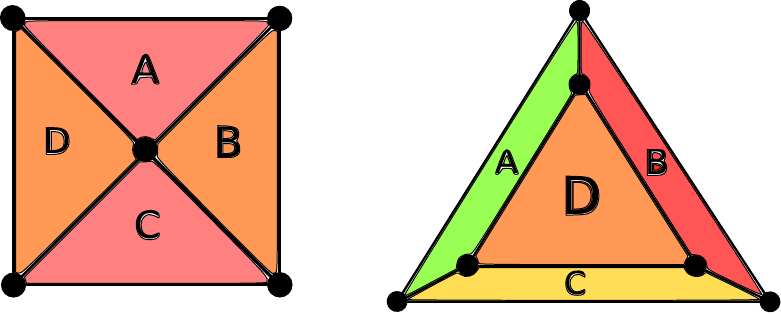}
	  \end{center}
	  \caption{An example of map colourings.}
	  \label{fig:colouring-example}
	\end{figure}

\section{Dual Graphs}
	When colouring maps, the important information that we are interested in is which countries neighbour which other countries. Unfortunately this isn’t encoded very well in the way we have described maps so far. To overcome this problem we introduce a new concept called a ``dual graph”. This is a means to convert a map back into a graph (different from the graph of which the map
is embedding). Informally, for each country you create a ``capital” and then for neighbouring countries you create a railway between capitals such that it doesn’t cross any other railway and has exactly one border crossing. To make this a little more formal we describe the process by which you can convert between a map and it’s dual graph, based on a definition by Fritsch \cite[Section 4.4]{fritsch}.

	For a map $M$ you can obtain the dual graph $G'$ via the following process:
	\begin{enumerate} [(i)]
	  \item For each country $c$ place a point (``capital") in the interior of $c$, this point becomes a vertex in $G'$
	  \item For neighbouring countries $c_1, c_2$ create an edge $r$ between the corresponding vertices of $G'$ such that $r$ intersect all the edges of $G'$ at a single point.
	\end{enumerate}
	
	\begin{figure}[htb]
	  \begin{center}
			\includegraphics[scale=0.7]{./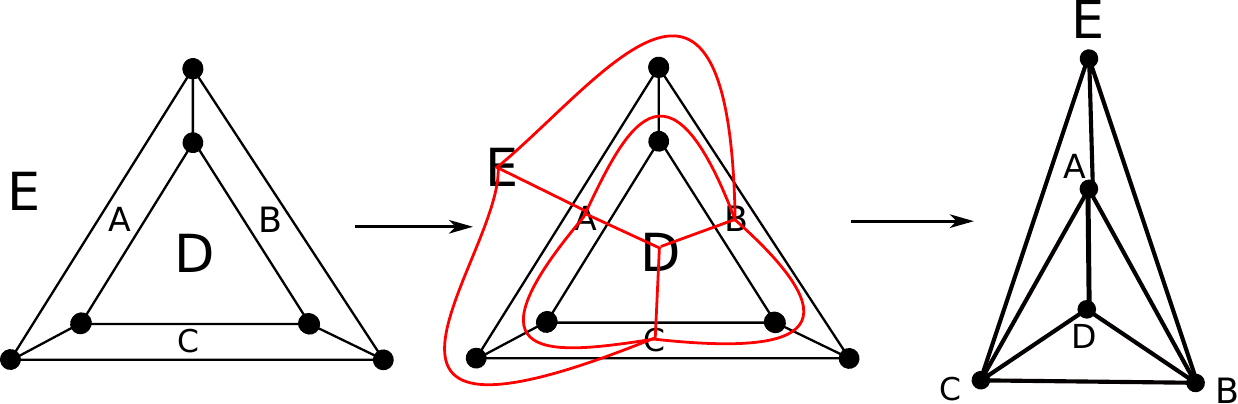}
	  \end{center}
	  \caption{Converting from a graph to its' dual graph.}
	  \label{fig:graph-to-dual}
	\end{figure}
	
	\begin{example}
	  Figure \ref{fig:graph-to-dual} shows how you can convert a map into its' dual graph by first making each country (note that we have countries A through D but also since this is an embedding in the plane we also have ``the rest" which also counts as a country, in this case E) into a vertex. Then for neighbouring countries join the corresponding vertices to be left with a graph like the one on the right.
	\end{example}
	
	\begin{remark} \label{rem:dual-graph-quirks}
	  There are a few points to notice about the construction of dual graphs (illustrated in Figure \ref{fig:dual-graph-quirks}):
	  \begin{enumerate} [(i)]
		  \item Vertices in $G'$ correspond to countries in $G$
		  \item There cannot exist loops in $G'$ since a country cannot neighbour itself
		  \item If two countries share more than one edge in common there still only exists one edge between the corresponding vertices in $G'$
	  \end{enumerate}
	\end{remark}
	
	\begin{figure}[htb]
	  \begin{center}
			\includegraphics[scale=0.4]{./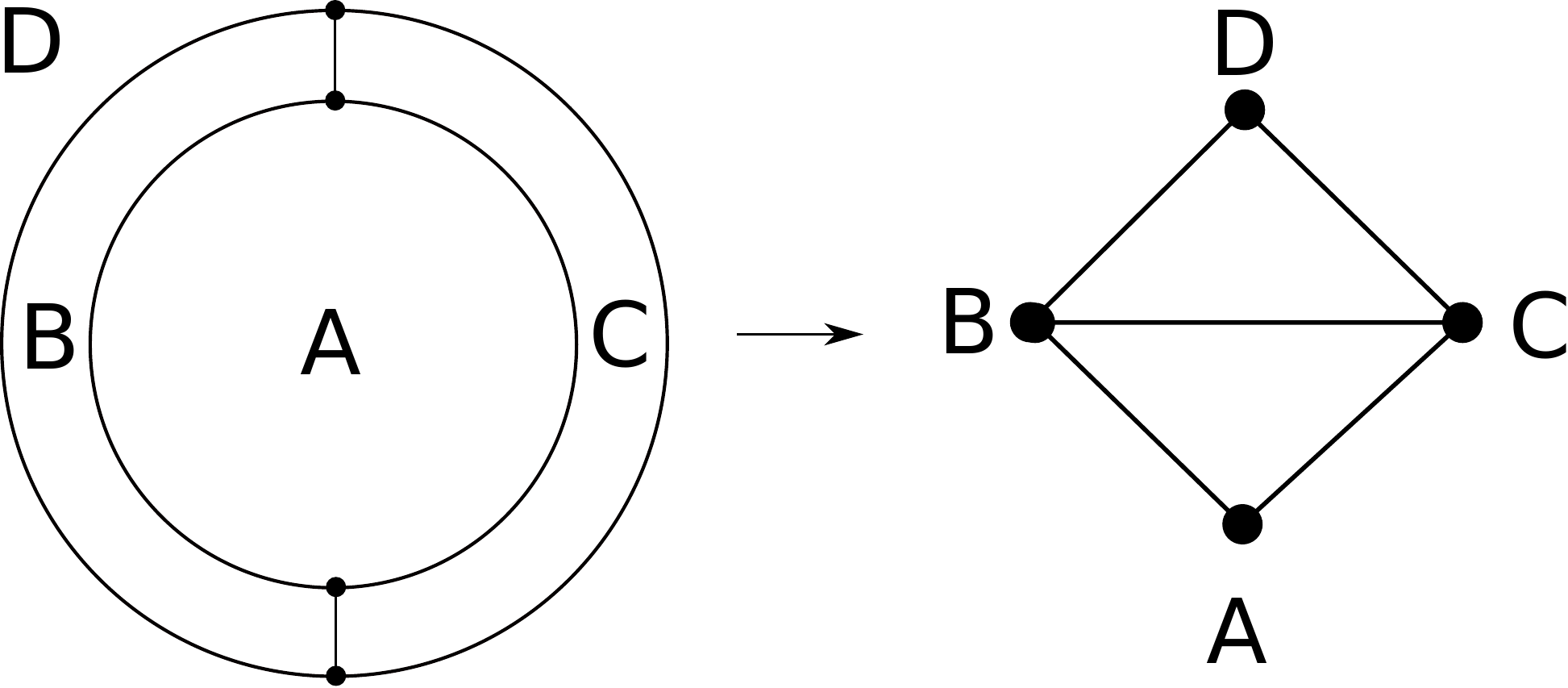}
	  \end{center}
	  \caption{An example of some of the quirks of dual graphs outlined in Remark \ref{rem:dual-graph-quirks}.}
	  \label{fig:dual-graph-quirks}
	\end{figure}
	
	We then state without proof (although it is reasonably intuitive to see from the construction of a dual graph) a very important theorem:
    \begin{theorem}
      For a graph, $M$, embedded on a surface, $S$, if you construct the dual graph of $M$, call it $G$, then $G$ can also be embedded on $S$.
    \end{theorem}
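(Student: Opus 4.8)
The plan is to show that the informal construction of the dual graph given above \emph{already} produces an embedding; that is, the capitals and railways can be realised as distinct points and non-crossing arcs in $S$. Since an embedding of $G$ is exactly a drawing of $G$ on $S$ in which no two edges cross and no edge passes through a vertex, it suffices to carry out the construction carefully enough that these two conditions are met. I would therefore treat the construction not as a heuristic but as the definition of a concrete map $f : G \to S$, and then verify that $f$ is a homeomorphism onto its image.

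First I would fix the vertices. For each country $c$ of the map $M$ I choose a single point $v_c$ (its capital) in the interior of $c$. Distinct countries are disjoint open subsets of $S$, so the points $v_c$ are pairwise distinct and none of them lies on the image of $M$; this realises the vertex set of $G$ as a finite set of distinct points of $S$. Next I would construct the railways. By Remark~\ref{rem:dual-graph-quirks} a country never neighbours itself (so there are no loops to draw) and a multiple border still contributes only a single dual edge, so for each pair of neighbouring countries $c_1, c_2$ I select exactly one common edge $e$ of $M$ together with one point $p_e$ in the interior of $e$. I then draw an arc from $v_{c_1}$ to $p_e$ lying, apart from its endpoint $p_e$, in the interior of $c_1$, and similarly an arc from $v_{c_2}$ to $p_e$ inside $c_2$; their concatenation is the railway, which meets the image of $M$ in the single point $p_e$. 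Choosing the $p_e$ to be distinct for distinct edges guarantees that different railways meet $M$ at different points.

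The crux, and the step I expect to be the main obstacle, is the disjointness of the railways away from the capitals, and in particular the disjointness of the several railways that emanate from one and the same capital $v_c$. Here I would invoke the combinatorial-surface structure, under which each country $c$ is a topological open disc: there is a homeomorphism carrying $c$ to the standard open disc and $v_c$ to its centre. The finitely many chosen border points $p_e$ on $\partial c$ pull back to distinct points, which I may arrange to lie on distinct radii, and the corresponding arcs can then be taken to be the straight radial segments. These segments are pairwise disjoint except at the centre, and transporting the picture back through the homeomorphism yields arcs in $c$ that share only the capital $v_c$.

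Finally I would assemble these facts. Two railways across different edges meet neither inside a country, by the disjointness of the radial arcs just obtained, nor on $M$, by the distinctness of the points $p_e$; hence any two railways are disjoint except where they legitimately share a capital, and no railway passes through a capital other than its two endpoints. The union of the capitals and railways is therefore a subspace of $S$ homeomorphic to the abstract graph $G$, and the map $f$ sending each vertex of $G$ to its capital and each edge to its railway is the desired embedding of $G$ on $S$. The only genuinely delicate point in this argument is the radial-arc construction in the previous paragraph, which is where the assumption that the countries are discs does real work.
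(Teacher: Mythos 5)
The paper itself states this theorem without proof (``reasonably intuitive to see from the construction''), so your write-up is supplying a genuinely missing argument, and the overall construction --- capitals $v_c$, one crossing point $p_e$ per dual edge, railways assembled from two half-arcs --- is exactly the right one. The genuine gap is the step you yourself flag as the crux. You justify the disjointness of the half-arcs inside a single country by invoking ``the combinatorial-surface structure, under which each country $c$ is a topological open disc.'' But the theorem is asserted for an arbitrary graph $M$ embedded on $S$, and Definition~\ref{def:map} places no cellularity condition on the embedding: the complementary components of a general embedding need not be discs. A single non-separating circle on the torus leaves an annular country, and the paper itself later manipulates countries homeomorphic to annuli and double annuli in Chapter~\ref{chp:empire-plane} (see Figure~\ref{fig:annulus-to-squeezed}), so this is not an exotic case but one the theorem must cover. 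For such a country there is no homeomorphism onto the standard disc carrying $v_c$ to the centre, and the radial-segment construction on which your entire disjointness argument rests is unavailable. There is a subsidiary gloss even in the disc case: an open-disc country need not have closed-disc closure (the complement of a single embedded arc in the sphere is an open disc whose boundary circle maps non-injectively onto the arc), so the points $p_e$ cannot simply be ``pulled back'' through a homeomorphism; one needs the boundary attaching map of a $2$-cell, which again presupposes a cellular embedding that the hypothesis does not grant.

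The repair is to replace the disc hypothesis by a ``spider lemma'' valid in any connected open subset $c$ of a surface: given $v_c \in c$ and finitely many distinct accessible boundary points $p_e$, there exist arcs from $v_c$ to the $p_e$, pairwise disjoint except at $v_c$, meeting the boundary only at their endpoints. To prove it, take for each $p_e$ a short access arc entering $c$ (possible because $p_e$ lies in the interior of an edge of $M$ and $c$ abuts that edge on one side), join the inner endpoints of these access arcs to $v_c$ by an embedded finite tree inside the connected set $c$, and pass to a regular neighbourhood of that tree: a regular neighbourhood of a finite tree in a surface is a closed disc, and inside this disc your radial argument applies verbatim. With that lemma in place of the cellularity assumption, the rest of your assembly --- distinct $p_e$ for distinct dual edges, disjointness across countries because countries are disjoint open components --- goes through unchanged, and the theorem is proved for arbitrary embeddings as stated.
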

    
	One important result which we will use later in this report is that a dual graph is in fact by it's nature simple:
	\begin{lemma} \label{lem:dual-graph-simple}
	  For a map $M$ with dual graph $G'$ we have that $G'$ is simple.
	\end{lemma}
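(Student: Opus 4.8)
The plan is to verify directly from the construction of the dual graph that $G'$ satisfies both conditions in the definition of a simple graph, namely that it contains no loops and no multiple edges. Since these two obstructions are independent, I would treat them separately, in each case appealing to the two-step construction of $G'$ together with the definition of neighbouring countries.

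First I would rule out loops. Recall that a loop is an edge incident with only one vertex, so an edge $r$ of $G'$ is a loop precisely when both of its endpoints are the same capital, i.e.\ when it arises from a single country. However, by step (ii) of the construction an edge of $G'$ is created only for a pair of neighbouring countries $c_1, c_2$, and by definition two countries are neighbouring when they share a common edge of $M$ --- this is a relation between two \emph{distinct} countries, so $c_1 \neq c_2$. A single country cannot be neighbouring with itself, hence every edge of $G'$ joins the capitals of two distinct countries and no edge is a loop. This is exactly the observation recorded in Remark~\ref{rem:dual-graph-quirks}(ii).

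Next I would rule out multiple edges. Suppose two distinct countries $c_1, c_2$ share more than one common edge in $M$, so that they are neighbouring in more than one place. The key point is that step (ii) of the construction produces a single edge for each neighbouring pair rather than one edge per shared border: the edge is associated to the pair $\{c_1,c_2\}$, not to the individual common edges of $M$. Consequently the number of edges of $G'$ joining the capital of $c_1$ to the capital of $c_2$ is at most one, so $G'$ has no multiple edges, as noted in Remark~\ref{rem:dual-graph-quirks}(iii). Having excluded both loops and multiple edges, $G'$ is simple.

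The argument is short because the lemma is essentially a consequence of how the dual graph is defined. The only genuine subtlety, and the point I would be most careful to pin down, is the loop case: one must be clear that the neighbouring relation --- and hence the edge-creation rule of the construction --- is a relation between two distinct countries, so that an edge of the underlying map $M$ having the same country on both of its sides does not by itself produce a loop in $G'$. Making this precise is really a matter of reading the definition of neighbouring countries correctly rather than of any substantive topological difficulty.
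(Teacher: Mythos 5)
Your proposal is correct and follows essentially the same route as the paper's own proof: both arguments read the simplicity of $G'$ directly off the dual-graph construction, excluding multiple edges because step (ii) creates only one edge per neighbouring pair (Remark~\ref{rem:dual-graph-quirks}(iii)) and excluding loops because the neighbouring relation holds only between distinct countries (Remark~\ref{rem:dual-graph-quirks}(ii)). Your extra care in pinning down the loop case --- noting that a border with the same country on both sides does not create a loop --- is a slight sharpening of the paper's briefer ``a country cannot border itself'' remark, but the underlying argument is identical.
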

	\begin{proof}
	  We need to check that dual graphs contain no loops or multiple edges. Firstly, as mentioned in Remark \ref{rem:dual-graph-quirks} we join two vertices only by a single edge even if they have more than one common border thus we will never obtain multiple edges. With regards loops this would only arise if a country bordered itself but this is absurd so that can never occur either. So therefore a dual graph can contain neither multiple edges or loops and is therefore simple.
	\end{proof}
	
	We then naturally get:
	\begin{lemma} \label{lem:maps-equal-dual-graphs}
		For a map $M$ with dual graph $G'$, a map colouring on $M$ is equivalent to colouring the vertices of $G'$ such that any two vertices that joined by an edge do not share the same colour.
	\end{lemma}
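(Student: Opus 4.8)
The plan is to exhibit an explicit bijection between the set of map colourings of $M$ and the set of proper vertex colourings of $G'$, that is, assignments of colours to the vertices of $G'$ in which any two vertices joined by an edge receive distinct colours. Since the statement is essentially an unwinding of the construction of the dual graph, the work consists of carefully matching up the two colouring conditions via the correspondence already built into the definition of $G'$.

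First I would recall the two relevant features of the dual-graph construction. By Remark~\ref{rem:dual-graph-quirks}(i) the construction gives a bijection $c \mapsto v_c$ between the countries of $M$ and the vertices of $G'$. Moreover, by step (ii) of the construction, two distinct vertices $v_{c_1}$ and $v_{c_2}$ are joined by an edge in $G'$ precisely when the countries $c_1$ and $c_2$ are neighbouring in $M$, i.e. when they share a common edge. The fact that $G'$ has no loops and no multiple edges (Lemma~\ref{lem:dual-graph-simple}) guarantees that adjacency in $G'$ records \emph{exactly} the neighbouring relation on countries and nothing more: each neighbouring pair contributes a single edge, and no country is adjacent to itself.

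Next I would define the correspondence on colourings. Given a map colouring of $M$, assign to each vertex $v_c$ of $G'$ the colour of the country $c$; conversely, given a colouring of the vertices of $G'$, assign to each country $c$ the colour of $v_c$. Using the bijection of the previous step, these two assignments are mutually inverse, so it remains only to check that one is a valid map colouring if and only if the other is a proper vertex colouring. This is immediate: a map colouring forbids the same colour on neighbouring countries, and under the correspondence a neighbouring pair $c_1, c_2$ is exactly a pair $v_{c_1}, v_{c_2}$ joined by an edge, so the two families of forbidden configurations coincide. Hence valid map colourings correspond bijectively to proper vertex colourings of $G'$.

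There is no genuine obstacle here; the only point requiring care is the logical equivalence of the two constraints, which hinges on the edge-correspondence being an exact match, every neighbouring pair, and only neighbouring pairs, becoming an edge. This is precisely what Remark~\ref{rem:dual-graph-quirks} and Lemma~\ref{lem:dual-graph-simple} supply, so the argument reduces to stating the bijection and verifying that the two colouring conditions are the same condition expressed in different language.
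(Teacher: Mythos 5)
Your proposal is correct and takes essentially the same approach as the paper: the paper's proof likewise rests on the country--vertex bijection and the fact that edges of $G'$ correspond exactly to neighbouring pairs of countries, so that the two colouring conditions coincide. You simply spell out the mutually inverse correspondence and the role of Remark~\ref{rem:dual-graph-quirks} and Lemma~\ref{lem:dual-graph-simple} more explicitly than the paper's two-sentence argument does.
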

	\begin{proof}
	  Since vertices in $G$ represent countries in $M$ we have parity there. Also since we create an edge between two vertices in $G$ whenever two countries are neighbours in $M$ we have parity there also. Thus we have an equivalent problem.
    \end{proof}
    
	\begin{remark} \label{rem:map-dual-graph-parity}
	  We also have a similar reverse correspondance that if we have an arbitrary graph $G'$ which we can embed in a surface $S$ then we can create a map $M$ on $S$ such that the dual graph of $M$ is exactly $G'$. Thus we have complete parity between graphs and maps, i.e. if we could find a planar a graph which required 5 colours to colour properly then we have shown that there exists at least one map on the plane which requires 5 colours to colour properly. Note also that dual graph encode much more neatly the important information since each edge in the dual graph represents an important border that we need to consider when colouring.
    \end{remark}

\section{Colouring Graphs}
  Now that we have established that colouring graphs is equivalent to colouring maps we look at a few results about graphs and introduce some new terminology which we borrow from David Gay \cite[Chapter 12]{gay}.
  \begin{definition}
    For a graph $G$, we define the chromatic number, $c(G)$, to be the number of colours required to colour $G$ properly.
  \end{definition}
  
  When thinking about colouring a graph $G$ it is useful to only have to consider vertices and edges which have a direct impact on $c(G)$ so now we introduce a new concept:
  \begin{definition}
    If removing any edges and/or vertices from a graph $G$ always results in a graph $G'$ such that $c(G') < c(G)$ then $G$ is called a \textbf{critical graph}.
  \end{definition}
  
  We can now then produce some Lemmas about critical graphs (again taken from Gay, \cite[Chapter 12]{gay}) which we will be using in later chapters to help prove some more significant results:
  \begin{lemma} \label{lem:critical-graph-vertex-degree}
    If a graph $K$ is critical with chromatic number $c(K)$ then every vertex of $K$ has degree at least $c(K)-1$.
  \end{lemma}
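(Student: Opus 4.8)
The plan is to argue by contradiction, exploiting the definition of a critical graph to extend a colouring of a smaller graph back to the whole of $K$. Suppose, for contradiction, that $K$ contains some vertex $v$ whose degree is at most $c(K)-2$. I would then consider the graph $K'$ obtained from $K$ by deleting the vertex $v$ together with all edges incident to it. Since $K$ is critical, the definition tells us immediately that removing a vertex (and its edges) strictly decreases the chromatic number, so $c(K') < c(K)$, and because chromatic numbers are integers this gives $c(K') \leq c(K)-1$.

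The second step is to fix a proper colouring of $K'$ using only $c(K)-1$ colours, which exists by the bound just obtained. The idea is now to reinsert $v$ and show we can assign it one of these same $c(K)-1$ colours without creating a conflict. The neighbours of $v$ in $K$ are precisely the vertices $v$ was adjacent to, and by our assumption there are at most $c(K)-2$ of them; hence in the colouring of $K'$ these neighbours between them occupy at most $c(K)-2$ distinct colours. Since $c(K)-1$ colours are available, at least one colour is left unused by every neighbour of $v$, and we may colour $v$ with it.

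This produces a proper colouring of the whole graph $K$ using only $c(K)-1$ colours, directly contradicting the fact that $c(K)$ is the minimum number of colours needed to colour $K$ properly (so that $K$ cannot be coloured with fewer). The contradiction shows that no vertex of degree at most $c(K)-2$ can exist, and therefore every vertex of $K$ has degree at least $c(K)-1$, as required.

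The argument is short and I do not expect a genuine obstacle; the only point requiring care is the bookkeeping on colours, namely verifying that "at most $c(K)-2$ neighbours" really does leave a free colour among $c(K)-1$ possibilities. This is a simple pigeonhole observation, but it is the crux of the proof, so I would state it explicitly rather than gloss over it. I would also be careful to invoke the critical-graph hypothesis correctly, using it only to guarantee that deleting the single vertex $v$ strictly lowers the chromatic number, which is exactly the form of the definition given earlier.
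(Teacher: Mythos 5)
Your proof is correct and follows essentially the same route as the paper's: delete the low-degree vertex $v$, use criticality to colour the remaining graph $K'$ with $c(K)-1$ colours, and then reinsert $v$, whose at most $c(K)-2$ neighbours leave a spare colour, contradicting the minimality of $c(K)$. Your version is in fact slightly more careful than the paper's, since you state the pigeonhole step on the neighbours' colours explicitly rather than asserting that a colour is ``guaranteed to be spare.''
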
 
  \begin{proof}
    We use a proof by contradiction as found in Gay, \cite[Chapter 12]{gay}. \newline
    So suppose that there is a vertex $v \in K$ such that $\deg(v) < c(K) - 1$. Then from the graph $K$ remove $v$ and all the edges attached to $v$ to obtain a new graph $K'$, then because $K$ was critical we have that $K'$ can be coloured in at most $c(K) - 1$ colours, so produce such a colouring. \newline
    We then colour $K$ in exactly the same way as we coloured $K'$, then we only have $v$ left to colour but since it is joined to at most $c(K) - 2$ other vertices there is guaranteed to be a colour spare so no new colours are needed. This means that $K$ is has been coloured in $c(K) - 1$ colours which is a contradiction since the chromatic number of $K$ is $c(K)$.
  \end{proof}
  
  \begin{lemma} \label{lem:simple-graph-three-sides}
    If a graph $G \subset S$ is simple then:
    \begin{displaymath}
      3C \leq 2E
    \end{displaymath}
    where $C$ and $E$ are the number countries and edges in (the embedding of) $G$ respectively.
  \end{lemma}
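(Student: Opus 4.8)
The plan is to argue by double counting the incidences between edges and countries. For each country $c$ let $\partial(c)$ denote the number of edge-sides appearing on its boundary walk, so that an edge lying between two distinct countries contributes $1$ to each of them, while a bridge (an edge having the same country on both sides) contributes $2$ to that single country. The first step is to record the global identity
\begin{displaymath}
  \sum_{c} \partial(c) = 2E,
\end{displaymath}
whose justification is simply that every edge has exactly two sides and each side is counted by precisely one country.

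Next I would establish the local bound $\partial(c) \geq 3$ for every country $c$, and this is the step where simplicity is actually used. Since $G$ has no loops, no country can be bounded by a boundary walk of length one; and since $G$ has no multiple edges, no country can be bounded by exactly two \emph{distinct} edges, because two edges sharing both of their endpoints would constitute a double edge. Hence every non-degenerate boundary walk traverses at least three edge-sides.

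Combining the two facts then yields
\begin{displaymath}
  3C = \sum_{c} 3 \leq \sum_{c} \partial(c) = 2E,
\end{displaymath}
which is exactly the claimed inequality.

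The main obstacle I anticipate is making the lower bound $\partial(c) \geq 3$ genuinely watertight at the degenerate end. The argument above tacitly assumes that each country is a disc bounded by an honest closed walk of length at least three, whereas simplicity alone does not forbid a single bridge from cutting off a country of boundary length two: the graph $K_2$ is simple yet has one country with $\partial(c) = 2$, and there $3C = 3 > 2 = 2E$. I would therefore either invoke the standing combinatorial surface condition that each country is a topological disc with at least three sides, or restrict attention to graphs with at least three vertices and no cut-edges, so that the one-sided and two-sided degenerate faces cannot occur. With that caveat in place, the counting argument is entirely elementary.
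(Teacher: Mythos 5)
Your core argument is the same as the paper's: show every country has at least three bounding edge-sides, then conclude $3C \leq 2E$. The paper, however, leaves the double count implicit and disposes of the local bound by a two-case picture (a country bounded by one edge forces a loop, by two edges forces a double edge); your explicit identity $\sum_c \partial(c) = 2E$ makes rigorous what the paper only gestures at. More importantly, the caveat you raise at the end is not excessive caution --- it exposes a real gap in the paper's own proof. The paper's case analysis tacitly assumes each country's boundary is a closed walk traversing distinct edges, and so misses exactly the bridge configuration you name: in $K_2$ (or any simple tree) the unique country's boundary walk traverses each edge twice, giving $\partial(c) = 2$ without any loop or multiple edge, and $3C \leq 2E$ fails outright. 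The lemma as literally stated is therefore false for simple graphs in general; the paper survives only because its standing convention (Definition of a combinatorial surface: every country is an open disc with at least $3$ sides) quietly excludes these degenerate embeddings, and because the lemma is only ever applied to dual graphs and critical graphs where such faces do not arise. So your proposed fix --- invoking that disc-with-at-least-three-sides condition, or excluding cut-edges --- is precisely the hypothesis the paper needs but never states in the lemma; keep it.
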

  \begin{proof}
    It is sufficient to show that each country in the embedding of $G$ is surrounded by at least 3 edges. We have only two cases to check, namely that a country is surrounded by 1 or 2 countries.
	
	\begin{figure}[htb]
	  \begin{center}
		\includegraphics[scale=0.7]{./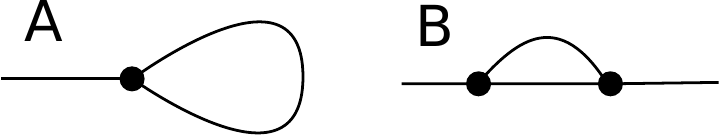}
	  \end{center}
	  \caption{The two cases that require checking for Lemma \ref{lem:simple-graph-three-sides}.}
	  \label{fig:critical-lemma-cases}
	\end{figure}
    
    Figure \ref{fig:critical-lemma-cases} demonstrates the two cases we need to check but you can see in both one of the requirements of simple graph is broken, in A we have a loop and in B we have a multiple edge. Thus if $G$ is a simple graph every country in its' embedding must be surrounded by at least three edges.
  \end{proof}
  
  We are now in a position to prove that $K_5$ isn't planar or equivalently:
  \begin{theorem} \label{thm:K5-not-planar}
    $K_5$ cannot be embedded on the sphere.
  \end{theorem}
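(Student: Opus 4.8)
The plan is to argue by contradiction, pitting a simple edge-counting estimate against Euler's formula. First I would record the combinatorial data of $K_5$: it has $V = 5$ vertices and, since every pair of distinct vertices is joined by exactly one edge, $E = \binom{5}{2} = 10$ edges. Crucially $K_5$ is a simple graph (no loops and no multiple edges), so Lemma \ref{lem:simple-graph-three-sides} will be applicable.

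Next I would suppose, for contradiction, that $K_5$ does embed on the sphere. Since the sphere has Euler characteristic $\chi = 2$ (Equation \eqref{eqn:euler-char-genus} with $g = n = 0$), Definition \ref{def:combinatorial-surface} gives $V - E + C = 2$, where $C$ is the number of countries. Substituting the values above yields $5 - 10 + C = 2$, and hence $C = 7$.

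Finally I would play this against the simple-graph inequality. Because $K_5$ is simple, Lemma \ref{lem:simple-graph-three-sides} tells us that $3C \leq 2E$. Plugging in $C = 7$ and $E = 10$ gives $21 \leq 20$, which is false. This contradiction shows that no embedding of $K_5$ on the sphere can exist.

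The main obstacle is not the arithmetic but justifying that the hypothetical embedding genuinely yields a combinatorial surface, so that Euler's formula and Lemma \ref{lem:simple-graph-three-sides} both apply: we need each country to be an open disk bounded by at least three edges. Since $K_5$ is connected this is believable --- every face of a connected graph embedded on the sphere is a disk --- but it is the point deserving the most care. I would either invoke connectivity explicitly or restrict to cellular embeddings, noting that the ``at least three sides'' requirement is precisely what Lemma \ref{lem:simple-graph-three-sides} extracts from simplicity, which is what makes the $3C \leq 2E$ bound tight enough to force the contradiction.
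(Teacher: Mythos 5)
Your proof is correct and follows essentially the same route as the paper's: assume an embedding exists, use $\chi(S^2)=2$ with $V=5$, $E=10$ to deduce $C=7$, then contradict Lemma~\ref{lem:simple-graph-three-sides} via $3C = 21 > 20 = 2E$. Your closing remark about ensuring the faces are disks (cellularity) is a point of rigour the paper silently elides, so it is a welcome refinement rather than a different argument.
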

  \begin{proof}
    We use proof by contradiction, so assume that $K_5$ can be embedded on the sphere. Then by Definition \ref{def:combinatorial-surface} we can calculate the Euler characteristic via an embedding of $K_5$ on the sphere. Giving us:
    \begin{displaymath}
      \chi(S) = C - E + V \Rightarrow 2 = C - \frac{5 \cdot 4}{2} + 5 \Rightarrow C = 7
    \end{displaymath}
    But then since $K_5$ is simple by construction we can apply Lemma~\ref{lem:simple-graph-three-sides} to get:
    \begin{displaymath}
      3C = 21 < 20 = 2E
    \end{displaymath}
    which is clearly a contradiction and so $K_5$ cannot be embedded on the sphere.
  \end{proof}
  
  \begin{lemma} \label{lem:critical-equals-simple}
    If a graph $K$ is critical then it is also simple.
  \end{lemma}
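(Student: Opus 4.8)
The plan is to argue by contradiction, mirroring the style of the preceding critical-graph lemmas. Suppose $K$ is critical but \emph{not} simple; then by the definition of a simple graph, $K$ must contain either a loop or a multiple edge. I would treat these two failures of simplicity as separate cases, and in each case exhibit an edge whose removal leaves the chromatic number unchanged, directly contradicting the defining property of a critical graph (that deleting \emph{any} edge or vertex strictly lowers $c(K)$).

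First I would handle the loop case. The key observation is that adjacency, as defined earlier in the excerpt, holds only between two \emph{distinct} vertices sharing a common edge. A loop is incident with a single vertex $v$, so it creates no adjacency between distinct vertices and therefore imposes no constraint on any proper colouring. Hence if $K$ contains a loop $e$, deleting $e$ to form $K'$ yields $c(K') = c(K)$, since every proper colouring of $K$ is a proper colouring of $K'$ and vice versa. This contradicts criticality, so $K$ can contain no loops.

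Next I would handle the multiple-edge case. Suppose there are two vertices $v, w$ joined by more than one edge. Removing just one of these parallel edges to form $K'$ leaves $v$ and $w$ still joined by at least one remaining edge, so they remain adjacent and the set of colouring constraints is unchanged. Consequently $c(K') = c(K)$, again contradicting the requirement that removing an edge from a critical graph strictly decreases its chromatic number. Hence $K$ has no multiple edges. Since $K$ has neither loops nor multiple edges, $K$ is simple.

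I expect the only subtle point (rather than a genuine obstacle) to be justifying that a loop or a redundant parallel edge is truly irrelevant to the chromatic number; this rests entirely on the convention that proper colouring constrains only distinct adjacent vertices, so once that is stated explicitly the two cases close immediately. No Euler-characteristic or degree machinery is needed here—the argument is purely about which edges can be deleted without changing $c(K)$.
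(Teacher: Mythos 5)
Your proof is correct and follows essentially the same route as the paper: the paper likewise argues that a loop can be removed without changing the chromatic number and that one (but not both) of a pair of parallel edges can be removed without changing it, contradicting criticality. Your version is in fact more careful than the paper's, since you explicitly justify why these deletions leave $c(K)$ unchanged (adjacency only constrains distinct vertices, and $v,w$ remain adjacent after deleting one parallel edge), points the paper leaves to a figure.
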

  \begin{proof}
    So we need to prove that we cannot have loops or multiple edges. For this we use again Figure~\ref{fig:critical-lemma-cases} which contains the things we need to check. However we can see that in A we can simply remove the loop which will not change the chromatic number and in B we can remove one (but not both) of the multiple edges. Thus we can say a critical graph contains no loops or mutiple edges and is therefore simple.
  \end{proof}
  
  \begin{corollary} \label{cor:critical-graph-three-sides}
    If a graph $K \subset S$ is critical then:
    \begin{displaymath}
      3C \leq 2E
    \end{displaymath}
    where $C$ and $E$ are the number countries and edges in (the embedding of) $K$ respectively.
  \end{corollary}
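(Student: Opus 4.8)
The plan is to obtain this result as an immediate consequence of the two preceding results, since the corollary is essentially a composition of Lemma~\ref{lem:critical-equals-simple} and Lemma~\ref{lem:simple-graph-three-sides}. The key observation is that the inequality $3C \leq 2E$ has already been established for the class of \emph{simple} graphs, so all that remains is to show that any critical graph falls into that class and then invoke the earlier bound directly.

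First I would appeal to Lemma~\ref{lem:critical-equals-simple}, which tells us that if $K$ is critical then $K$ is simple. This is the only non-trivial ingredient, and it was handled earlier by arguing that a loop can be deleted without affecting the chromatic number and that one edge of any multiple edge can likewise be deleted, so a minimal (critical) graph can contain neither. Having placed $K$ inside the simple category, I would then apply Lemma~\ref{lem:simple-graph-three-sides} to the embedding $K \subset S$, which asserts precisely that $3C \leq 2E$ for the numbers of countries $C$ and edges $E$ of a simple embedded graph. Chaining these two facts gives the claimed inequality for $K$.

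I do not expect any genuine obstacle here, because the substantive content lives in the two lemmas rather than in the corollary itself; the corollary merely packages them for reuse in later chapters (it is stated so that the bound can be quoted directly for critical graphs without re-deriving simplicity each time). The one point worth stating carefully is that the $C$ and $E$ appearing in the corollary are exactly the quantities to which Lemma~\ref{lem:simple-graph-three-sides} applies, namely the countries and edges of the embedding of $K$ in $S$, so that the substitution is legitimate with no change of meaning. Thus the proof is a two-line deduction: critical $\implies$ simple $\implies$ $3C \leq 2E$.
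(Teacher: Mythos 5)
Your proof is correct and matches the paper's own argument exactly: the paper likewise derives the corollary as an immediate consequence of Lemma~\ref{lem:critical-equals-simple} (critical implies simple) followed by Lemma~\ref{lem:simple-graph-three-sides}. No gaps; your additional remark that $C$ and $E$ refer to the same embedded quantities in both statements is a sensible precaution but not needed beyond what the paper states.
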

  \begin{proof}
    This follows immediately as a result of Lemma~\ref{lem:simple-graph-three-sides} and Lemma~\ref{lem:critical-equals-simple}.
  \end{proof}
  
  \chapter{Heawood's Conjecture for higher genus surfaces} \label{chp:heawood-higher-genus}

We now begin to look more closely at some of the problems that Heawood introduced in his paper of 1890. In this chapter we will consider maps on orientable surfaces of higher genus. We will look at an upper bound that Heawood gave for the number of colours needed to colour any map (on an orientable surface) which depended only on the surface’s genus. The ideas outlined below follow a similar line to that given in Gay \cite[Chapter 12]{gay}.

\section{Initial results}
  Before we actually state and prove Heawood's upper bound we will first introduce and prove some smaller Lemmas which will be useful in the proof and also later in the report. We start with some definitions to establish some notation starting with a concept similar to one we have already introduced for graphs:
  \begin{definition}
    For maps and surfaces we define the \textbf{chromatic number} as follows:
    \begin{enumerate}
      \item The number of colours, $c(M)$ needed to colour a map properly is called the \textbf{chromatic number} of $M$
      \item If $S$ is a surface then $c(S)$, the \textbf{chromatic number} number of $S$, is the maximum of all $c(M)$ where $M$ is a map on $S$
    \end{enumerate}
  \end{definition}
  
  Intuitively, $c(S)$ is an upper bound for the number of colours required for any map that can be drawn on the surface $S$.

  We also introduce some notation for any graphs:
  \begin{definition} \label{def:average-degree}
    For any graph $G$ we have that the average degree of the vertices, $A(G)$ is:
    \begin{displaymath}
      A(G) = \frac{1}{V} \sum_{i=1}^{V} i V_i
    \end{displaymath}
    where $V$ is the number of vertices in $G$ and $V_i$ is the number of vertices of degree $i$ in $G$.
  \end{definition}
  
  We can then produce a series of Lemmas which we will use later:
  \begin{lemma} \label{lem:edge-count-vertices}
    For any graph $G$ we have the following result:
    \begin{displaymath}
      2E = \sum_{i=1}^{V} i V_i
    \end{displaymath}
  \end{lemma}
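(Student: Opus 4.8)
The plan is to recognise that the quantity $\sum_{i=1}^{V} i V_i$ is nothing more than a re-bookkeeping of the total of all vertex degrees: rather than adding up $\deg(v)$ one vertex at a time, we group the vertices according to their common degree value. Since there are exactly $V_i$ vertices of degree $i$, each contributing $i$ to the total, summing $i V_i$ over all possible degrees recovers the same number as summing $\deg(v)$ over every vertex. Thus the first step is to justify the identity
\begin{displaymath}
  \sum_{i=1}^{V} i V_i = \sum_{v \in G} \deg(v),
\end{displaymath}
which is a purely combinatorial regrouping and requires no geometric input.

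The heart of the argument is then the standard double-counting (``handshaking'') observation applied to the set of incident vertex--edge pairs. I would count this set of incidences in two ways. Counting by vertices, each vertex $v$ participates in exactly $\deg(v)$ incidences by the definition of degree, so the total is $\sum_{v \in G} \deg(v)$. Counting instead by edges, I would argue that every edge contributes exactly two incidences: an ordinary edge joining distinct vertices $v$ and $w$ supplies one incidence at each endpoint, while a loop, though incident to only a single vertex, is by definition counted twice in that vertex's degree and therefore contributes two ends there as well. Hence summing over all $E$ edges gives a total of $2E$ incidences. Equating the two counts yields $\sum_{v \in G} \deg(v) = 2E$, and combining this with the regrouping identity above gives the claimed formula.

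The only real subtlety — and the step I would be most careful with — is the treatment of loops, precisely because the paper's definition of degree counts a loop twice. As long as the double count records each loop as supplying two ends at its unique vertex, the edge-side tally remains $2E$ and the argument goes through uniformly for general graphs (loops and multiple edges included). Multiple edges present no difficulty, since each parallel edge is counted as a separate edge and contributes its own two incidences. With the loop convention handled correctly, no further obstacle remains.
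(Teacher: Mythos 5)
Your proposal is correct and follows essentially the same double-counting (handshaking) argument as the paper: the sum $\sum_{i=1}^{V} i V_i$ regroups the total degree, and each edge contributes two to that total, giving $2E$. If anything, your explicit treatment of loops is slightly more careful than the paper's phrasing ``each edge is incident to two vertices,'' which is literally false for a loop but still yields the correct count because the paper's definition of degree counts a loop twice.
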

  \begin{proof}
    It is obvious that each vertex of degree $i$ has $i$ edges incident to it so if therefore we can count the number of edges incident to each vertex by: $\sum_{i=1}^{V} i V_i$. However each edge is incident to two vertices so this actually double counts all the edges hence we get:
    \begin{displaymath}
      2E = \sum_{i=1}^{V} i V_i
    \end{displaymath}
    as required.
  \end{proof}
  
  \begin{lemma} \label{lem:bound-chromatic-edge-vertices}
    For a critical graph $K$ we have that:
    \begin{displaymath}
      \frac{2E}{V} \geq c(K) - 1
    \end{displaymath}
    where $E$ and $V$ are the number of edges and vertices in $K$ respectively.
  \end{lemma}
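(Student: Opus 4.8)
The plan is to recognise that the quantity $\tfrac{2E}{V}$ is precisely the average degree $A(K)$ of the graph, and then to bound this average below using the fact that in a critical graph no vertex can have small degree. The two ingredients I would invoke are already available: Lemma~\ref{lem:edge-count-vertices}, which expresses $2E$ as the degree sum $\sum_{i=1}^{V} i V_i$, and Lemma~\ref{lem:critical-graph-vertex-degree}, which guarantees that every vertex of a critical graph $K$ has degree at least $c(K)-1$.

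First I would observe that the minimum-degree condition can be phrased in terms of the counters $V_i$: since no vertex has degree below $c(K)-1$, we have $V_i = 0$ for every $i < c(K)-1$. This lets me throw away the low-degree terms of the sum and start the summation at $i = c(K)-1$. Next I would estimate each surviving term by replacing the index $i$ with the smaller value $c(K)-1$, which can only decrease the sum, giving
\begin{displaymath}
  2E = \sum_{i=1}^{V} i V_i = \sum_{i=c(K)-1}^{V} i V_i \geq (c(K)-1) \sum_{i=c(K)-1}^{V} V_i .
\end{displaymath}
Because the remaining $V_i$ account for all the vertices of $K$, the final sum equals $V$, so $2E \geq (c(K)-1) V$, and dividing through by $V$ yields the claimed inequality.

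There is no genuine obstacle here; the statement is essentially an averaging argument, and the only point requiring a little care is making sure the index bookkeeping is honest. In particular I would stress that the step $iV_i \geq (c(K)-1)V_i$ holds term by term for \emph{every} $i$: either $V_i = 0$ (for the small degrees excluded by Lemma~\ref{lem:critical-graph-vertex-degree}) or else $i \geq c(K)-1$, so in both cases the inequality is valid and the full sum $\sum_{i=1}^{V} V_i = V$ may be used without reindexing. This avoids any worry about whether $c(K)-1$ lies inside the summation range.
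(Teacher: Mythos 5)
Your proof is correct and follows essentially the same route as the paper: both invoke Lemma~\ref{lem:edge-count-vertices} and Lemma~\ref{lem:critical-graph-vertex-degree} to argue that $\tfrac{2E}{V}$ is the average degree and that the average cannot fall below the minimum degree $c(K)-1$. The only difference is that you make the averaging step explicit with the term-by-term bound $iV_i \geq (c(K)-1)V_i$, which the paper simply asserts; this is a welcome sharpening, not a different argument.
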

  \begin{proof}
    It is clear to see that we can combine Definition~\ref{def:average-degree} and Lemma~\ref{lem:edge-count-vertices} to get the following:
    \begin{displaymath}
      A(K) = \frac{2E}{V}
    \end{displaymath}
    Then Lemma~\ref{lem:critical-graph-vertex-degree} tells us that every vertex has order at least $c(K) - 1$ thus the average degree must be at least $c(K) - 1$ giving:
    \begin{displaymath}
      \frac{2E}{V} = A(K) \geq c(K) - 1
    \end{displaymath}
    as required.
  \end{proof}
  
  \begin{lemma} \label{lem:bound-edges-vertices-euler}
    For a simple graph $G$ embedded in a surface $S$ we have:
    \begin{displaymath}
      2E \leq 6V - 6 \chi(S)
    \end{displaymath}
    where $E$ and $V$ are the number of edges and vertices in $G$ respectively.
  \end{lemma}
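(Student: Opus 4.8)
The plan is to combine the Euler characteristic formula from Definition~\ref{def:combinatorial-surface} with the edge--country inequality for simple graphs established in Lemma~\ref{lem:simple-graph-three-sides}, eliminating the country count $C$ so as to leave a relation purely between $E$, $V$ and $\chi(S)$.

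First I would write down Euler's formula for the embedding, $\chi(S) = V - E + C$, and rearrange it to isolate the number of countries as $C = \chi(S) - V + E$. This expresses the one quantity we wish to discard entirely in terms of the quantities that appear in the target inequality, so that once it is substituted away nothing extraneous remains.

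Next I would invoke Lemma~\ref{lem:simple-graph-three-sides}. Since $G$ is assumed simple, this guarantees $3C \leq 2E$, i.e. $C \leq \tfrac{2}{3}E$. Substituting the expression for $C$ found above gives $\chi(S) - V + E \leq \tfrac{2}{3}E$. A short rearrangement then yields $\tfrac{1}{3}E \leq V - \chi(S)$, hence $E \leq 3V - 3\chi(S)$, and multiplying through by $2$ produces exactly the claimed bound $2E \leq 6V - 6\chi(S)$.

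The argument is essentially a single algebraic manipulation once the two ingredients are assembled, so I do not anticipate a genuine obstacle. The only points requiring care are bookkeeping ones: I must make sure the hypothesis of Lemma~\ref{lem:simple-graph-three-sides} is genuinely met (simplicity of $G$, which forces every country to be bounded by at least three edges and so validates $3C \leq 2E$), and that the embedding is such that Euler's formula holds in the form $V - E + C = \chi(S)$. With both in hand the combination is immediate, and I would track the direction of each inequality carefully so that the final estimate comes out as an upper bound on $2E$ rather than a lower one.
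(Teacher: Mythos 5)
Your proposal is correct and follows essentially the same route as the paper: both combine Euler's formula $\chi(S) = V - E + C$ with the simple-graph inequality $3C \leq 2E$ from Lemma~\ref{lem:simple-graph-three-sides} and eliminate $C$ by elementary algebra, differing only in that you solve for $C$ and substitute while the paper multiplies Euler's formula by $3$ and bounds $3C$ directly. No gaps; your attention to verifying the simplicity hypothesis is exactly the right bookkeeping.
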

  \begin{proof}
    We have that the Euler characteristic of $S$ is given by :
    \begin{displaymath}
      \chi(S) = V - E + C
    \end{displaymath}
    Then combining this with Lemma~\ref{cor:critical-graph-three-sides} we get the following:
    \begin{displaymath}
      \begin{aligned}
        3\chi(S) &= 3V - 3E + 3C \\
        &\leq 3V - 3E + 2E \\
        &= 3V - E \\
        \Rightarrow 2E &\leq 6V - 6\chi(S)
      \end{aligned}
    \end{displaymath}
    as required.
  \end{proof}
  
  \begin{lemma} \label{lem:bound-chromatic-vertices-euler}
    For a critical graph $K$ embedded in a surface $S$ we have:
    \begin{displaymath}
      c(K) - 1 \leq 6 - 6 \frac{\chi(S)}{V}
    \end{displaymath}
    where $V$ is the number of vertices in $K$.
  \end{lemma}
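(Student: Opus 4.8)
The plan is to chain together the two immediately preceding lemmas, using the fact that a critical graph is automatically simple so that the hypotheses of both are satisfied by $K$.

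First I would apply Lemma~\ref{lem:bound-chromatic-edge-vertices} directly to the critical graph $K$, which gives $c(K) - 1 \leq \frac{2E}{V}$. This already supplies the left-hand side of the target inequality, so the only remaining task is to bound the quantity $\frac{2E}{V}$ from above by $6 - 6\frac{\chi(S)}{V}$.

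For that upper bound I would invoke Lemma~\ref{lem:bound-edges-vertices-euler}, which asserts $2E \leq 6V - 6\chi(S)$. The single point that needs care is that this lemma is stated for \emph{simple} graphs, whereas $K$ is only assumed to be critical. The key observation that makes the argument go through is that criticality forces simplicity: by Lemma~\ref{lem:critical-equals-simple} every critical graph is simple, so the hypothesis of Lemma~\ref{lem:bound-edges-vertices-euler} is met and its conclusion applies to $K$. Dividing through by $V > 0$ then yields $\frac{2E}{V} \leq 6 - 6\frac{\chi(S)}{V}$.

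Finally I would concatenate the two inequalities to obtain
\[
c(K) - 1 \;\leq\; \frac{2E}{V} \;\leq\; 6 - 6\frac{\chi(S)}{V},
\]
which is exactly the claimed bound. I do not expect any genuine obstacle here: the only thing to watch is verifying that both ingredient lemmas are legitimately applicable to the same graph $K$, and this is guaranteed by the implication criticality $\Rightarrow$ simplicity. All the substantive work was already done in establishing Lemmas~\ref{lem:bound-chromatic-edge-vertices} and~\ref{lem:bound-edges-vertices-euler}; this lemma merely packages them together into a single statement relating the chromatic number to the Euler characteristic and the vertex count.
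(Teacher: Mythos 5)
Your proof is correct and follows essentially the same route as the paper's: it chains Lemma~\ref{lem:bound-chromatic-edge-vertices} with Lemma~\ref{lem:bound-edges-vertices-euler} after dividing the latter through by $V > 0$. If anything you are slightly more careful than the paper, which applies Lemma~\ref{lem:bound-edges-vertices-euler} to the critical graph $K$ without explicitly citing Lemma~\ref{lem:critical-equals-simple} to verify the simplicity hypothesis, a step you rightly make explicit.
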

  \begin{proof}
    We have from Lemma~\ref{lem:bound-edges-vertices-euler} that $6V - 6 \chi(S) \geq 2E$ and since $V \neq 0$ we can divide through by $V$ giving:
    \begin{displaymath}
      6 - 6 \frac{\chi(S)}{V} \geq \frac{2E}{V}
    \end{displaymath}
    Then we can use Lemma~\ref{lem:bound-chromatic-edge-vertices} to extend this giving:
    \begin{displaymath}
      6 - 6 \frac{\chi(S)}{V} \geq \frac{2E}{V} \geq c(K) - 1
    \end{displaymath}
    as required.
  \end{proof}
  
  \begin{remark}
    It is important to note at this point is that all of the above holds for any critical graph embedded on any surface $S$ whereas the theorem we prove below only holds for surfaces with $\chi(S) \leq 0$.
  \end{remark}
    
\section{Heawood's Inequality}
  We now move towards the main aim of the chapter and that is to state and prove Heawood's upper bound on the chromatic number of surfaces:
  \begin{theorem} \label{thm:heawood-upper-normal}
    For an orientable, connected, compact surface without boundary, $S$, with Euler characteristic $\chi(S) \leq 0$ we have the following inequality:
    \begin{equation} \label{eqn:heawood-upper-normal}
      c(S) \leq \lfloor \frac{7 + \sqrt{49 - 24\chi(S)}}{2} \rfloor
    \end{equation}
    where $\lfloor x \rfloor$ is the floor function and is equal to the largest integer less than or equal to $x$.
  \end{theorem}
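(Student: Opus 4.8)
The plan is to reduce the statement to a single inequality about a critical graph and then solve a quadratic in the chromatic number. First I would use the duality between maps and graphs (Lemma~\ref{lem:maps-equal-dual-graphs} together with Remark~\ref{rem:map-dual-graph-parity}) to observe that $c(S)$ equals the largest chromatic number among graphs embeddable on $S$. Choosing a graph $G$ on $S$ with $c(G) = c(S)$ and deleting edges and vertices one at a time for as long as the chromatic number is preserved, I arrive at a critical subgraph $K$ with $c(K) = c(S)$; since $K$ is contained in $G$, the embedding restricts and $K$ still embeds on $S$. This is what lets me apply the critical-graph machinery to a graph that actually realises $c(S)$.

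Next I would invoke Lemma~\ref{lem:bound-chromatic-vertices-euler}, which for this critical $K$ (on $V$ vertices) gives
\begin{displaymath}
  c(K) - 1 \leq 6 - 6\frac{\chi(S)}{V}.
\end{displaymath}
The extra ingredient I need is the elementary bound $V \geq c(K)$: any graph on $V$ vertices is properly colourable with at most $V$ colours, one per vertex, so its chromatic number can never exceed $V$.

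Writing $c = c(K) = c(S)$ for brevity, the crucial step — and the one place where the hypothesis $\chi(S) \leq 0$ is genuinely used — is substituting this vertex bound. Because $\chi(S) \leq 0$, the quantity $-6\chi(S)/V$ is non-negative and \emph{decreases} as $V$ grows, so replacing $V$ by the smaller value $c$ can only enlarge the right-hand side:
\begin{displaymath}
  c - 1 \leq 6 - 6\frac{\chi(S)}{V} \leq 6 - 6\frac{\chi(S)}{c}.
\end{displaymath}
Were $\chi(S)$ positive, this monotonicity would reverse and the chain of inequalities would break, which is precisely why the theorem is restricted to $\chi(S) \leq 0$.

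Finally I would clear the denominator. Multiplying through by the positive integer $c$ and rearranging yields
\begin{displaymath}
  c^2 - 7c + 6\chi(S) \leq 0.
\end{displaymath}
The left-hand side is an upward-opening parabola in $c$, so this forces $c$ to lie below its larger root, giving $c \leq \tfrac{1}{2}\bigl(7 + \sqrt{49 - 24\chi(S)}\bigr)$. Since $c(S) = c$ is an integer, applying the floor function delivers exactly the bound~\eqref{eqn:heawood-upper-normal}. The delicate points of the argument are the reduction to a critical graph realising $c(S)$ and the monotonicity argument that relies on $\chi(S) \leq 0$; the remaining manipulation is routine algebra.
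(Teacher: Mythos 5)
Your proposal is correct and follows essentially the same route as the paper: reduce to a critical graph realising the chromatic number, apply Lemma~\ref{lem:bound-chromatic-vertices-euler}, substitute the bound $c(K) \leq V$ using $\chi(S) \leq 0$, and solve the resulting quadratic $c^2 - 7c + 6\chi(S) \leq 0$ before taking the floor. Your phrasing of the final step via the larger root of an upward-opening parabola is just a repackaging of the paper's explicit factorisation and sign analysis, and your slightly more careful justification of why a critical graph realising $c(S)$ embeds on $S$ is a presentational refinement rather than a different argument.
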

  \begin{proof}
    For this proof we consider a map $M$ on the surface $S$, then if we can provide an upper bound on the chromatic number, $c(M)$, for a general map $M$ we can also provide an upper bound on $c(S)$.
    
    Furthermore, following on from Lemma~\ref{lem:maps-equal-dual-graphs} we will consider the dual graph $G'$ of a general map $M$ and then produce a critical graph $K$ from this graph $G'$ so that $c(M) = c(G') = c(K)$ and it is this $K$ that we will be working with.
    
    Our basic strategy will be to find relationships between the different components of $G'$ and then to eliminate variables until we are left with an equation involving only $c(K)$ and $\chi(S)$.
    
    We start with the result for a general critical graph $K$ provided by Lemma~\ref{lem:bound-chromatic-vertices-euler} which states:
    \begin{displaymath}
      c(K) - 1 \leq 6 - 6 \frac{\chi(S)}{V}
    \end{displaymath}
    We then note that $c(K) \leq V$ since you cannot possibly need more colours than there are vertices! Combining this with the assumption that $\chi(S) \leq 0$ we get:
    \begin{displaymath}
      -6 \frac{\chi(S)}{V} \geq 0 \Rightarrow -6 \frac{\chi(S)}{V} \leq -6 \frac{\chi(S)}{c(K)}
    \end{displaymath}
    So combining both of the above we get:
    \begin{displaymath}
      \begin{aligned}
        c(K) - 1 &\leq 6 - 6\frac{\chi(S)}{V} \\
        &\leq 6 - 6\frac{\chi(S)}{c(K)}
      \end{aligned}
    \end{displaymath}
    So we have now achieved our aim of getting an equation involving only $c(K)$ and $\chi(S)$, so now we can play around with this to isolate $c(K)$:
    \begin{displaymath}
      \begin{aligned}
        &c(K) - 1 \leq 6 - 6\frac{\chi(S)}{c(K)} \\
        &\Rightarrow c(K) - 7 + 6\frac{\chi(S)}{c(K)} \leq 0 \\
        &\Rightarrow c(K)^2 - 7c(K) + 6 \chi(S) \leq 0 \\
        &\Rightarrow \left(c(K) - \frac{7 + \sqrt{49-24\chi(S)}}{2}\right)\left(c(K) - \frac{7 - \sqrt{49-24\chi(S)}}{2}\right) \leq 0
      \end{aligned}
    \end{displaymath}
    At this point note that since $\chi(S) \leq 0$ we get the following:
    \begin{displaymath}
      \begin{aligned}
        &\chi(S) \leq 0 \\
        &\Rightarrow \sqrt{49 - 24\chi(S)} \geq 7 \\
        &\Rightarrow 7 - \sqrt{49 - 24\chi(S)} \leq 0 \\
        &\Rightarrow c(K) - \frac{7 - \sqrt{49-24\chi(S)}}{2} \geq 0
      \end{aligned}
    \end{displaymath}
    Thus the whole of the right-hand factor is positive, this implies that the whole of the left hand factor must be negative yielding:
    \begin{equation}
      c(K) \leq \frac{7 + \sqrt{49-24\chi(S)}}{2}
    \end{equation}
    Then since we know $c(K) \in \mathbb{Z}$ we can say that \eqref{eqn:heawood-upper-normal} also holds and thus we are done.
  \end{proof}    

\section{Heawood's Conjecture} \label{sec:heawood-conjecture-normal}
  Heawood proved this result in his paper of 1890 \cite{heawood} and at the same time he conjectured that this inequality was sharp, i.e. that for orientable surfaces $S$ with $\chi(S) \leq 0$ you have:
  \begin{equation} \label{eqn:heawood-conjecture-normal}
    c(S) = \lfloor \frac{7 + \sqrt{49 - 24\chi(S)}}{2} \rfloor
  \end{equation}
  This later become known as the ``Map Colour Theorem".
  
  Although Heawood didn't manage to prove this result he did suggest a way to attack the problem, namely if you can create a map on a particular surface such that you have $n$ mutually neighbouring regions then you must require at least $n$ colours to colour the map properly. Thus if you can construct a map where $n$ is equal to the upper bound for $c(S)$ then you have shown that the upper bound is in fact an equality. Heawood even went as far as producing such a map for the torus (see Figure~\ref{fig:heawood-torus-construction}) but this was sufficiently irregular that he was unable to generalise it.
  
  \begin{figure}[htb]
	\begin{center}
      \includegraphics[scale=0.8]{./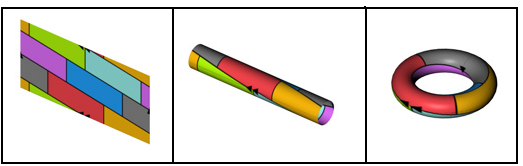}
	\end{center}
	\caption{A representation of the map Heawood constructed to prove his conjecture for the Torus.}
	\label{fig:heawood-torus-construction}
  \end{figure}
	
	The work to actually prove Heawood's conjecture began with Heffter a few years after Heawood's paper was published and he began in the manner to which Heawood suggested, however he noticed it was similar to a different problem called the $n$-cities problem which Heffter managed to solve for some cases. It wasn't until Dirac came along in 1952 though that it was proved that the solution to the $n$-cities problem would be equivalent to solving the Map Colour Theorem, but now that it was, work began to continue Heffter's work to extend to all cases. In 1968 Ringel and Youngs managed to cover all the cases that Heffter didn't quite manage and thus proved the Map Colour Theorem, details of the proof can be found in the book by Ringel \cite{ringel}.

\section{Four Colour Problem}
  For completeness we mention here some problems which are related to or came out of the Map Colour Theorem. The first and main partner to the Map Colour Theorem is the more famous ``Four Colour Theorem" which states that four colours are sufficient to colour any map on the sphere. Since the proof of the Map Colour Theorem requires that $\chi(S) \leq 0$ it clearly does not cover the case of the sphere since $\chi(S^2) = 2$, the Four Colour Theorem was however proved by Appel and Haken \cite{appel-haken} in its' own right in 1976 (note that is was actually solved 8 years \emph{after} the Map Colour Theorem!) and its' proof was controversial in that it required heavily on the use of computers to check properties of some 1936 maps. Doubts were initially quite high because the proof could not be feasibly checked by hand although it gradually gained wider acceptance; a simpler proof without the use of computers has yet to be found.
  
\section{Non-orientable surfaces}
  The initial proof on the upper bound and the subsequent conjecture by Heawood all focused on orientable surfaces, however in 1910 Tietze made an equivalent conjecture for non-orientable surfaces and surprisingly the formula is exactly the same. In fact it turned out that non-orientable surfaces were much easier than their orientable counterparts and consequently Ringel managed to prove the Map Colour Theorem for non-orientable surfaces in 1954 for all surfaces except the Klein bottle. The Map Colour Theorem states that $c(K) = 7$ where $K$ is the Klein bottle, but in fact Franklin proved in 1934 that $c(K) = 6$, this is the only case for which the Map Colour Theorem does not hold for non-orientable surfaces.
  
  \chapter{Empire maps on the plane} \label{chp:empire-plane}
When introducing maps (Section \ref{sec:maps-intro}) we placed on them the restriction that you cannot have two countries as part of the same empire (e.g. Alaska and the USA). In this chapter we move away from that restriction and towards another extension outlined by Heawood which covers the cases where a map can contain empires and colonies, i.e. two distinct countries which belong to a single empire and should therefore be coloured the same colour. Throughout this chapter we loosely follow the terminology and arguments as produced in the paper by Joan Hutchinson \cite{hutchinson}, but the terminology is slightly different.

\section{Introduction to Empire Maps} \label{sec:intro-empire-maps}
	As ever we need to formally set up some notation and describe exactly what we mean by the new restrictions, firstly we will introduce the the concept of an empire map:
	\begin{definition} \label{def:empire-map}
	  An \textbf{empire map} is a triple, ($M$, $A$, $g$), where:
	  \begin{itemize}
	    \item $M$ is a standard map (c.f. Definition \ref{def:map})
	    \item $A$ is a non-empty set of empires
	    \item $g$ is a surjective function which associates to each country in (the image of) $M$ an empire from $A$
	  \end{itemize}
	  for two countries, $\alpha, \beta$, if $g(\alpha) = g(\beta)$, then $\alpha, \beta$ are said to be part of the same \textbf{empire}.
	\end{definition}
	
	This is fine, however we would like to place some restrictions on how complicated our empire maps can be, or more specifically, how many countries we can have in each $m$-pire.
	\begin{definition} \label{def:m-pire-map}
	  For an empire map, $(M, A, g)$, if each empire contains at most $m$ countries then it is said to be an $m$-pire map.
	\end{definition}
	
	\begin{figure}[htb]
	  \begin{center}
        \includegraphics[height=0.15\textheight]{./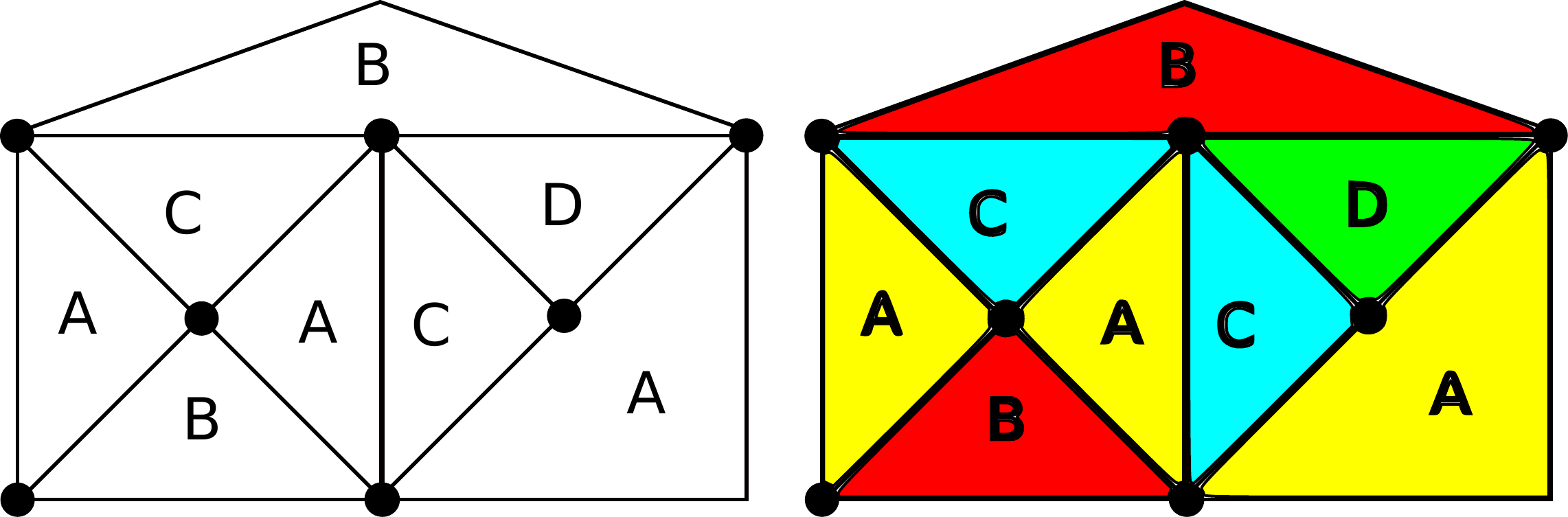}
	  \end{center}
	  \caption{An example of an empire map and an acceptable colouring.}
	  \label{fig:empire-example}
  \end{figure}
	
	\begin{example}
	  Figure \ref{fig:empire-example} shows an example of an empire map, in this example the empire A consists of 3 countries, the B and C empires each consists of 2 countries and the D ``empire" consists of a single country. Thus we can refer to this as a 3-pire map since none of the empire contain more then 3 countries.
	\end{example}
	\begin{remark} \label{rem:3pire-4pire-etc}
	  Note that, for the purposes of proving Lemma~\ref{lem:remove-vertex-still-mpire} later in the chapter, it is convenient to use this relaxed definition of an $m$-pire map which allows that the map in Figure~\ref{fig:empire-example} is also a 4-pire map, 5-pire map, etc. since none of the empires contain more than 4, 5 etc. counties. However,for general purposes, one normally refers to an empire map as being an $m$-pire map where $m$ is the smallest such number for which the statement still holds. Thus we would normally refer to the map in Figure~\ref{fig:empire-example} as a 3-pire map.
	\end{remark}
	
	We then look towards colouring these empire maps and intuitively you simply colour the map as before with the restriction that all countries belonging to the same empire must be coloured the same empire. We can define this more formally as follows:
	\begin{definition}
	  An \textbf{empire map colouring} of an empire map, ($M$, $A$, $g$), consists of assigning each element of $A$ a colour (or equivalently a natural number) such that for any two neighbouring countries, $\alpha, \beta $, with $g(\alpha) \neq g(\beta)$ then $g(\alpha)$ is not assigned the same colour as $g(\beta)$.
	\end{definition}
  
  \begin{example}
	  Figure \ref{fig:empire-example} shows a valid colouring, since each empire neighbours every other empire somewhere each one needs a different colour to be coloured properly.
	\end{example}

\section{Empire Graphs}
  As we did with standard maps it will be beneficial to deal with graphs rather than maps. So we define an empire graph which is the dual to an empire map:
  \begin{definition} \label{def:empire-graph}
    An \textbf{empire graph} is a triple, $(G, A, h)$, where:
    \begin{itemize}
      \item $G$ is a simple connected graph
      \item $A$ is a non-empty set of empires
      \item $h$ a surjective function which associates to each vertex in $G$ an empire from $A$
    \end{itemize}
    for vertices $v, w \in G$, if $h(v) = h(w)$, then $v$ and $w$ are said to be part of the same \textbf{vertex empire}.
  \end{definition}
  
  Similarly we will want to place a restriction on the number of vertices in any vertex empire:
  \begin{definition} \label{def:m-pire-graph}
	  For an empire graph, $(G, A, h)$, if each vertex empire contains at most $m$ vertices then it is said to be an $m$-pire graph.
	\end{definition}
  
  Similarly we can define a colouring of an empire graph:
  \begin{definition}
    An \textbf{empire graph colouring} of an empire graph, $(G, A, h)$, consists of assigning each element of $A$ a colour (or equivalently a natural number) such that for any two adjacent vertices, $v, w$, with $h(v) \neq h(w)$ then $h(v)$ is not assigned the same colour as $h(w)$.
	\end{definition}
  
  \begin{remark}
    It should be obvious that if we construct the standard dual graph of an $m$-pire map we will obtain an $m$-pire graph. Similarly if we construct a dual map from an $m$-pire graph we will obtain a valid $m$-pire map so once again we have comlete duality between maps and graphs.
  \end{remark}
  
  When colouring empire graphs we have the problem that we have different vertices which need to be coloured the same colour which is awkward and difficult to deal with. We can get round this though by simply putting all those vertices together! Or more formally:
  \begin{definition} \label{def:collapsed-empire-graph}
    You can obtain a \textbf{collapsed empire graph}, $G^*$, from an empire graph, $(G, A, h)$, via the following steps:
    \begin{enumerate} [(i)]
      \item Combine all the vertices which belong to the same empire to obtain a new graph $G^*$
      \item For any pair of vertices in $G^*$ which have more than one edge adjacent to both, remove all but one of these edges
      \item Remove any loops from $G^*$
    \end{enumerate}
  \end{definition}
  
  \begin{definition}
    If a collapsed empire graph can be obtained from an $m$-pire graph, then it is referred to a \textbf{collapsed $m$-pire graph}.
  \end{definition}
  
  \begin{figure}[htb]
    \begin{center}
      \includegraphics[width=\textwidth]{./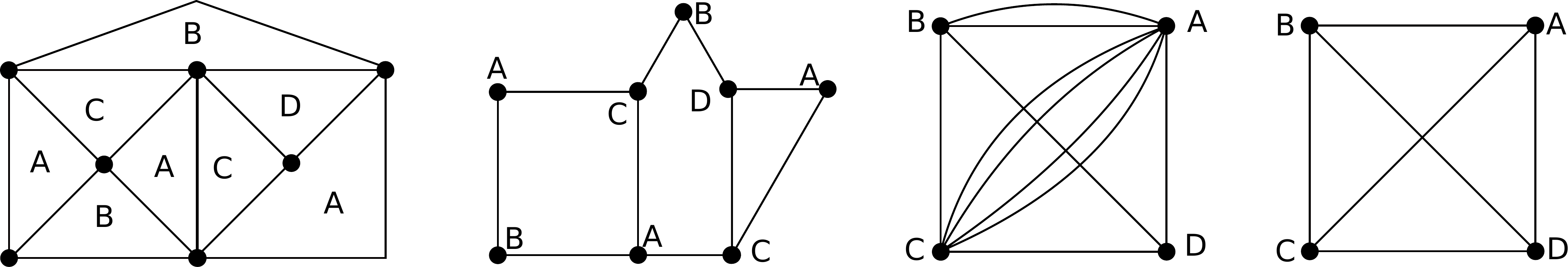}
	  \end{center}
	  \caption{A demonstration of creating a collapsed empire graph from an empire map.}
	  \label{fig:empire-graph-example}
  \end{figure}
  
  \begin{example}
	  Figure \ref{fig:empire-graph-example} shows the process of obtaining a collapsed empire graph from an empire map. The images from left to right represent the following:
	  \begin{enumerate} [(i)]
	    \item The original empire map which we have seen before
	    \item The empire graph which is the dual of the empire map
	    \item The empire graph with all vertices in the same empire identified
	    \item The final empire graph once all extraneous edges have been removed
	  \end{enumerate}
	\end{example}
	
  \begin{remark}
    It should be clear that if an empire map requires $n$ colours to colour properly, say, then the collapsed empire graph obtained from this empire map (via the dual empire graph) will also require $n$ colours to colour properly. Thus our aim for this chapter will be to look at producing an upper bound on the number of colours needed for all possible collapsed empire graphs and therefore produce an equal upper bound on empire maps.
  \end{remark}
  
  \begin{remark}
    It is also important to realise that there is a one to one correspondence between empire maps and empire graphs. However the same does
not hold for collapsed empire graphs. For any given empire map there exists a unique collapsed empire graph which represents it; at the same time any one collapsed empire graph may represent many different empire maps.
  \end{remark}
  
  We now prove a lemma about collapsed empire graphs on the sphere which we will use to prove an important theorem later in the chapter:
  \begin{lemma} \label{lem:empire-graph-average-bound}
    For an $m$-pire map on the sphere, $(M, A, g)$, with (dual) $m$-pire graph $(G', B, h)$ and collapsed $m$-pire graph $G^*$ there exists at at least one vertex $v$ in $G^*$ with $\deg(v) \leq 6m - 1$.
  \end{lemma}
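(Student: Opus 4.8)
The plan is to exploit the planarity of the \emph{uncollapsed} dual graph $G'$ rather than that of $G^*$ itself, since $G^*$ need not be planar (indeed the whole interest of empire maps is that collapsing can produce highly non-planar graphs). The key observation is that collapsing never creates new edges, so a bound on the edge count of $G'$ immediately bounds the edge count of $G^*$, and hence its average degree.

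First I would fix notation: let $n$ denote the number of countries of $M$, equivalently the number of vertices of $G'$, and let $r = |A|$ denote the number of empires, equivalently the number of vertices of $G^*$. Since $g$ assigns to each of the $n$ countries one of the $r$ empires and each empire contains at most $m$ countries (Definition~\ref{def:m-pire-map}), we have the crucial inequality $n \le mr$.

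Next I would bound the edges. By Lemma~\ref{lem:dual-graph-simple} the dual graph $G'$ is simple, and being the dual of a map on the sphere it is itself embeddable on the sphere, hence planar; so Lemma~\ref{lem:bound-edges-vertices-euler} applied with $S$ the sphere (so $\chi(S) = 2$) gives $2E' \le 6n - 12$, where $E'$ is the number of edges of $G'$. I then need to check that the collapsing operation of Definition~\ref{def:collapsed-empire-graph} does not increase the edge count: steps (ii) and (iii) only delete edges (multiple edges and loops), so if $E^*$ denotes the number of edges of $G^*$ then $E^* \le E'$. This is made precise by noting that each edge of $G^*$ arises from at least one edge of $G'$ joining countries of the two relevant empires, and distinct edges of $G^*$ come from disjoint such sets.

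Finally I would combine these facts by an averaging argument. The sum of the degrees of $G^*$ equals $2E^*$, and chaining the inequalities gives
\[
\sum_{v \in G^*} \deg(v) = 2E^* \le 2E' \le 6n - 12 \le 6mr - 12 < 6mr .
\]
If every one of the $r$ vertices of $G^*$ had degree at least $6m$, the degree sum would be at least $6mr$, contradicting the strict inequality; hence at least one vertex $v$ satisfies $\deg(v) \le 6m - 1$. The only place demanding care is the first step: Lemma~\ref{lem:bound-edges-vertices-euler} rests on the three-sided-country bound, which presumes a non-degenerate embedding, so I would dispose of the trivial small cases ($n \le 2$) separately, where every vertex of $G^*$ has very small degree and the claim is immediate. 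The conceptual heart of the argument — the step I would flag as the main obstacle — is resisting the temptation to bound $G^*$ directly and instead transporting the planar edge bound across the collapse via $E^* \le E'$ and $n \le mr$.
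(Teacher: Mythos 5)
Your proposal is correct and takes essentially the same route as the paper's own proof: both transfer the planar edge bound $2E' \leq 6V' - 12$ from the simple dual graph $G'$ (Lemma~\ref{lem:dual-graph-simple} plus Lemma~\ref{lem:bound-edges-vertices-euler} with $\chi(S)=2$) to the collapsed graph via $E^* \leq E'$ and $V' \leq mV^*$, then conclude by averaging. The only cosmetic difference is that you finish with a pigeonhole contradiction on the degree sum, where the paper explicitly computes $A(G^*) \leq 6m - \frac{12}{V^*} < 6m$.
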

  \begin{proof}
    We follow the argument as outlined in Hutchinson \cite{hutchinson}.
    
    Our aim will be to show that $A(G^*) < 6m$, then if this is true there must exist at least one vertex with degree less that $6m$ as required. Throughout this proof we shall refer to the vertices, edges and countries in $G'$ ($G^*$) as $V', E', C'$ ($V^*, E^*, C^*$) respectively. Also $V_i^*$ will denote the number of vertices in $G^*$ of degree exactly $i$.
    
    So we notice that we only remove edges from $G'$ to obtain $G^*$ we have that:
		\begin{equation} \label{eqn:hlem2-1}
		  E^* \leq E
		\end{equation}
		Then we have from Lemma \ref{lem:edge-count-vertices} that for any graph (so in our case consider $G^*$):
		\begin{displaymath}
		  \sum_{i=1}^{V^*} iV_i^* = 2E^* \leq 2E'
		\end{displaymath}
		Then since $G'$ is a dual graph, by Lemma \ref{lem:dual-graph-simple} we have that it is also simple, so we can apply Lemma \ref{lem:bound-edges-vertices-euler} to get that:
		\begin{displaymath}
		  2E' \leq 6V' - 6\chi(S) = 6V' - 12
		\end{displaymath}
		Then, since $G^*$ is a collapsed $m$-pire graph, each vertex of $G^*$ can have been obtained from at most $m$ vertices of $G$ thus we get:
		\begin{equation} \label{eqn:hlem2-2}
		  V' \leq mV^* \Rightarrow 6V' - 12 \leq 6mV^* - 12
		\end{equation}
		So combining the three inequalities above we obtain:
		\begin{displaymath}
		  \sum_{i=1}^{V^*} iV_i^* \leq 6mV^* - 12
		\end{displaymath}
		Then if we divide through by $V^*$ and noticed that the LHS looks like the average degree of the vertices we get:
		\begin{displaymath}
		  A(G^*) \leq 6m - \frac{12}{V^*}
		\end{displaymath}
		But then since $\frac{12}{V^*} > 0$ this clearly yields the strict inequality:
		\begin{displaymath}
		  A(G^*) < 6m
		\end{displaymath}
		as required.
  \end{proof}
  
\section{Proof of Claim} \label{sec:proof-of-claim}
  The first thing to say is that throughout the rest of this Chapter we will be consider removing vertices from various graphs. However when you remove a vertex from a graph you are left with edges which have one of their ends missing! To solve this, whenever you remove a vertex you also always remove all the edges incident to that vertex. So from now it will be assumed that one removes all incident edges when removing a vertex.
  
  In the paper by Hutchinson \cite{hutchinson}, in the proof of Theorem 1 the following claim is left as an exercise:

  \begin{claim} \label{clm:original-claim}
    For a collapsed $m$-pire graph, $G^*$, you can remove a vertex of degree less than $6m$ to leave another collapsed $m$-pire graph.
  \end{claim}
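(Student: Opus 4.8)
The plan is to verify directly that $G^* - v$ meets Definition~\ref{def:collapsed-empire-graph}. By Lemma~\ref{lem:empire-graph-average-bound} there is a vertex $v$ with $\deg(v) \le 6m - 1 < 6m$, so a vertex of the required degree is available to remove. Since $G^*$ is simple, the vertex-deleted graph $G^* - v$ is again simple. The observation I would lead with is that, provided $G^* - v$ is connected, it is already a collapsed $m$-pire graph essentially for free: give each of its vertices its own (singleton) empire, so that $(G^* - v, A_0, h_0)$ is a connected simple $1$-pire graph; as it has no multiple edges and no loops to delete, its collapse is itself, and a $1$-pire graph is in particular an $m$-pire graph. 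Thus the entire claim reduces to showing that $v$ can be removed without disconnecting $G^*$.

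A more structural way to see the same thing, which I would include because it mirrors the underlying map, is to start from an $m$-pire graph $(G', B, h)$ whose collapse is $G^*$; the vertex $v$ is the image of a single empire $a \in B$ holding at most $m$ vertices of $G'$. Deleting those vertices and their incident edges yields $G_0$, on which the surviving empires $B \setminus \{a\}$ still have at most $m$ vertices each, so, using the relaxed convention of Remark~\ref{rem:3pire-4pire-etc}, $(G_0, B \setminus \{a\}, h)$ remains an $m$-pire graph even though its largest empire may have shrunk below $m$. Because adjacency between two empires other than $a$ depends only on edges among their own vertices, none of which is disturbed when the vertices of $a$ are removed, the collapse of $G_0$ is exactly $G^* - v$. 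Either viewpoint leaves the same single gap to close.

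That gap is connectivity, and I expect it to be the main obstacle, since both Definition~\ref{def:empire-graph} and the singleton construction require a connected underlying graph, whereas deleting the empire $a$ can sever $G'$ (or $G^*$) when its only connecting paths passed through vertices of $a$. When working with $G_0$ I would try to reconnect it without changing its collapse: if two components each contain a vertex of one common surviving empire $b$, join those two vertices by a new edge, which lies within $b$ and so collapses to a discarded loop. The catch is that this repair merges two components exactly when they already lie in the same component of $G^* - v$, so it manufactures connectivity only when $G^* - v$ is connected to begin with. The real work, then, is to secure that $v$ is not a cut vertex of $G^*$: a connected graph always has non-cut vertices (for instance, in a leaf block of its block--cut-vertex tree), and the delicate part is to combine this with the density estimate underlying Lemma~\ref{lem:empire-graph-average-bound} so that a non-cut vertex of degree below $6m$ can in fact be chosen. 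Failing a clean such choice, I would instead observe that each connected component of $G^* - v$ is itself a collapsed $m$-pire graph, which is all that the surrounding colouring induction actually needs.
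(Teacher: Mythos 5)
You do not, in the end, prove the claim, and you say so yourself; the gap you leave open is the whole difficulty. It is worth knowing that the paper does not prove this claim either: it explicitly declines to, establishing it only under an annulus-free hypothesis (Lemma~\ref{lem:original-claim-proof}) and in general retreating to the weaker Claim~\ref{clm:modified-claim} (proved as Lemma~\ref{lem:proof-modified-claim}), where the vertex-deleted graph is merely a subgraph of a collapsed $m$-pire graph with the same number of vertices. Your diagnosis of the obstruction is exactly the paper's, translated into graph language: deleting an empire can disconnect things, which on the map side is the annulus and squeezed-annulus phenomenon, and your observation that intra-empire repair edges only merge components that the collapse already merges is correct. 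But the step your proof needs --- a vertex of $G^*$ of degree below $6m$ that is simultaneously a non-cut vertex --- is precisely what you do not supply. The average-degree bound of Lemma~\ref{lem:empire-graph-average-bound} is global and the block--cut-tree fact is local; nothing you say rules out that every low-degree vertex is a cut vertex while the non-cut vertices of the leaf blocks all have degree at least $6m$. (The counting argument that rescues this for $m=1$ uses planarity of $G^*$ itself, which fails for $m \geq 2$: collapsing destroys planarity, as the $K_{6m}$ examples show.) So as a proof of the stated claim, the proposal has a genuine, acknowledged hole at its load-bearing step.

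Two further points. Your opening ``singleton empire'' shortcut is a definitional loophole rather than an argument: under the abstract Definition~\ref{def:empire-graph} any simple connected graph is indeed the collapse of itself with singleton empires, but the claim's role in the induction for Theorem~\ref{thm:heawood-upper-empire} requires the smaller graph to be the collapsed graph of an $m$-pire map \emph{on the sphere}, since the inductive step re-applies Lemma~\ref{lem:empire-graph-average-bound}, which is stated and proved (via $\chi(S)=2$) only in that setting. For $m \geq 2$ the graph $G^* - v$ is generally non-planar, hence not the collapse of any spherical $1$-pire map, so this reading of the claim would be useless for the theorem it serves; your second, structural route (delete the empire's vertices from the planar $G'$ and check that the collapse of $G_0$ is exactly $G^* - v$) is the correct one and should replace it. Finally, your fallback --- applying the colouring induction to each connected component of $G^* - v$ separately --- is a legitimate alternative patch, genuinely parallel to what the paper actually does: the paper reconnects by tolerating one extra inter-empire edge and colours the supergraph $L$, while you would colour mutually non-adjacent components independently and then reinsert $v$, which has at most $6m-1$ neighbours in total. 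But as written this proves a different statement from Claim~\ref{clm:original-claim}, and it still owes the verification that each component, after your within-empire repair edges, is the collapse of a connected \emph{planar} $m$-pire graph.
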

  
  We will not be proving this claim but instead proving a very similar claim which is sufficient to complete the main proof. We will prove the following claim:
  \begin{claim} \label{clm:modified-claim}
    For a collapsed $m$-pire graph, $G^*$, if you remove any vertex to leave a new graph, $H$, then this graph is a subgraph of a collapsed $m$-pire graph which contains the same number of vertices as $H$.
  \end{claim}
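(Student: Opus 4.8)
The plan is to exploit the duality between $m$-pire maps and $m$-pire graphs in order to realise $G^*$ as the collapse of a concrete $m$-pire graph, delete from that graph exactly the vertices that were fused into the vertex we wish to remove, and then repair connectivity afterwards. First I would use Definition~\ref{def:collapsed-empire-graph} together with the map--graph duality to fix an $m$-pire graph $(G, A, h)$ (the dual of an $m$-pire map on the sphere) whose collapse is the given $G^*$. The vertex $v \in G^*$ that we delete is, by construction, the fusion of a single empire $E_v \subseteq V(G)$ consisting of at most $m$ vertices of $G$. I would then remove these vertices together with all their incident edges to form $G - E_v$, keeping the empire assignment $h$ restricted to $A \setminus \{E_v\}$.

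The key observation, which I would prove next, is that collapsing $G - E_v$ reproduces $H$ exactly. An edge between two empires $a, b \in A \setminus \{E_v\}$ survives into the collapse precisely when some vertex of $a$ is adjacent (in the underlying graph) to some vertex of $b$; deleting the vertices of $E_v$ destroys only adjacencies that involve $E_v$ itself, never an adjacency between two of the remaining empires. Hence the cross-empire adjacency of the surviving empires is identical in $G$ and in $G - E_v$, so $(G - E_v)^* = G^* - v = H$. This already exhibits $H$ as the collapse of $G - E_v$ on the correct vertex set; the only thing that can fail is that $G - E_v$ need not be connected, so it may violate the connectivity requirement of Definition~\ref{def:empire-graph} and therefore not literally be an $m$-pire graph.

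To finish I would repair connectivity. Since $G$ is planar (being the dual of a map on the sphere), $G - E_v$ inherits a planar embedding, and I would add edges between distinct connected components so as to obtain a simple, connected, planar graph $G'$ on the same vertices with the same empire partition. Concretely, whenever two components both meet a common face of the current embedding I would draw a new edge across that face joining a vertex of one component to a vertex of the other; such an edge can be neither a loop nor a repeat of an existing edge, so simplicity and planarity are preserved, while each such edge lowers the number of components by one. As $G'$ is then a simple connected planar graph with every empire still of size at most $m$, it is a genuine $m$-pire graph, so its collapse $(G')^*$ is a collapsed $m$-pire graph. Because $G' \supseteq G - E_v$, collapsing can only add edges, giving $H = (G - E_v)^* \subseteq (G')^*$; and since no empires were created or destroyed, $|V((G')^*)| = |A \setminus \{E_v\}| = |V(H)|$, which is exactly the assertion.

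I expect the main obstacle to be the connectivity repair of the last paragraph: one must verify that a disconnected planar graph can always be reconnected by edges drawn inside its faces without sacrificing planarity or simplicity. The crux is to argue that on the sphere two distinct components always share a common face, which is what lets the induction on the number of components run; the preservation of the $m$-pire bound and of every old empire adjacency is then routine.
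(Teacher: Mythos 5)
Your proof is correct, but it takes a genuinely different route from the paper's. The paper's argument (Lemma~\ref{lem:proof-modified-claim}) works entirely at the level of the map: it deletes the countries of the doomed empire, leaving ``blank spaces'', and tries to fill these in without creating new boundaries, which forces a pictorial case analysis --- annuli are pinched into squeezed annuli, countries with more than two neighbours have edges collapsed, double annuli are split --- until one irreducible obstruction remains, a squeezed annulus separating two empires (such as $B$ and $F$ in Figure~\ref{fig:must-create-boundaries}) that may be adjacent nowhere else, where one extra boundary must be created. You instead perform the dual surgery on the graph: delete the vertices of the empire $E_v$ from the dual $m$-pire graph $G$, observe that collapsing $G - E_v$ yields exactly $H$ (correct, since deleting $E_v$ destroys only adjacencies involving $E_v$), and note that the only possible failure is disconnection of $G - E_v$ --- which is precisely the paper's annulus obstruction in dual form, as the paper's own Definition~\ref{def:annulus-free} makes explicit (``annulus free'' is literally defined as the dual staying connected after removing an empire). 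Your reconnecting edges drawn inside common faces play exactly the role of the paper's forced new boundary, producing the same conclusion: $H$ plus possibly a few extra edges, on the same vertex set. Your insistence on keeping $G'$ planar is also the right move, since Remark~\ref{rem:map-dual-graph-parity} then converts $G'$ back into a map on the sphere, which is what the inductive step of Theorem~\ref{thm:heawood-upper-empire} needs (so that Lemma~\ref{lem:empire-graph-average-bound} applies at the next level down). What your approach buys is the complete elimination of the paper's case analysis in favour of two clean combinatorial facts; what it costs is the reconnection lemma, which you correctly identify as the crux --- though you should phrase it existentially: it is false that \emph{any} two components of a plane graph share a face (three nested cycles give a counterexample), but \emph{some} two components always do when the graph is disconnected (follow an arc between two components; the open sub-arc between the first pair of consecutive graph-points lying in different components sits inside a single face incident to both), and that existential form is all your induction on the number of components requires.
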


  Firstly, we need to introduce the concept similar to that of an annulus, which we are going to call a ``squeezed annulus", this occurs when countries belonging to the same empire form a kind of circle around some other empires. An example is given in Figure~\ref{fig:squeezed-annulus} where countries from the A completely enclose the two unlabelled countries in the middle.
  
  \begin{figure}[htb]
    \begin{center}
      \includegraphics[width=0.5\textwidth]{./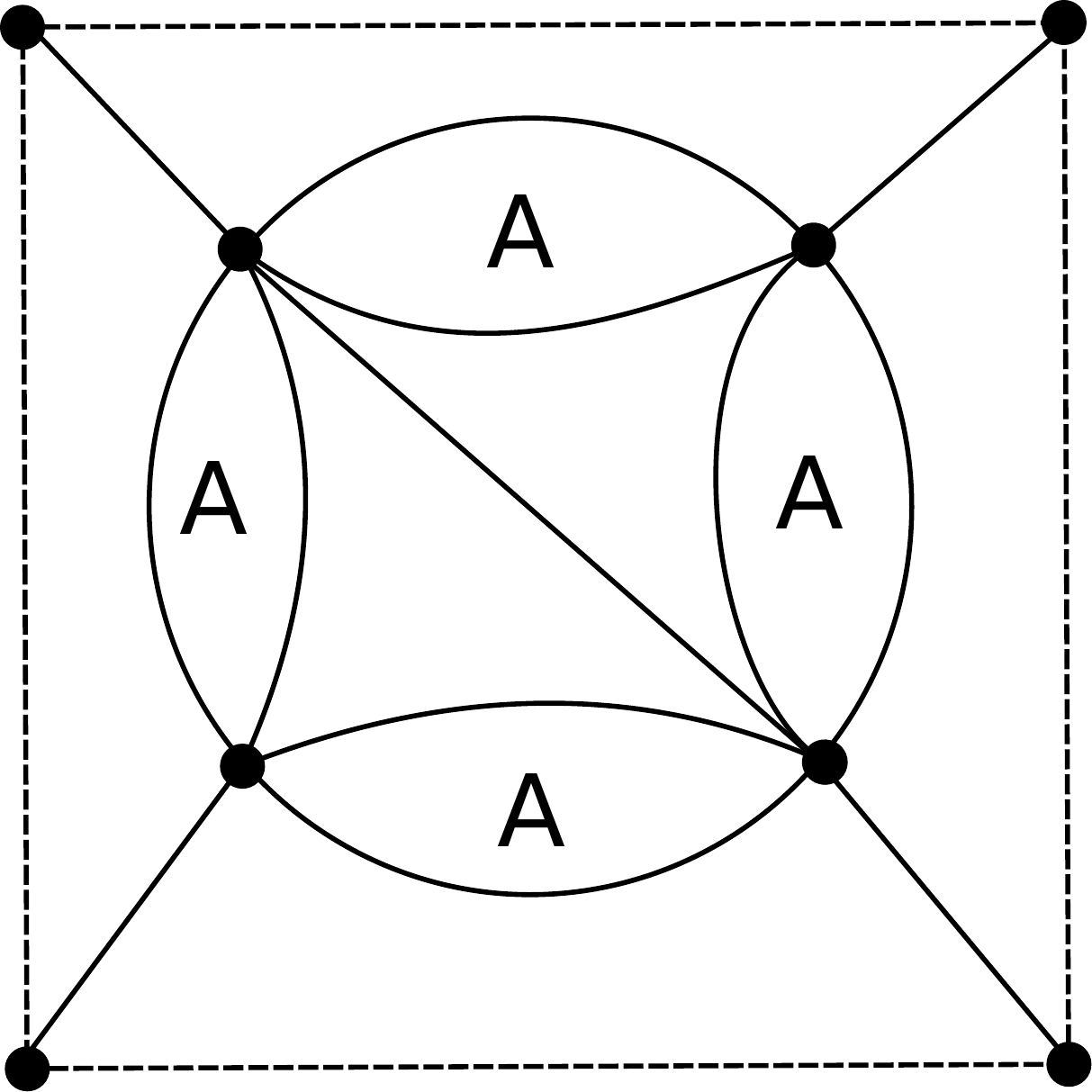}
	  \end{center}
	  \caption{An example of a ``squeezed annulus".}
	  \label{fig:squeezed-annulus}
  \end{figure}

  So we can define more precisely what it means for a graph to contain no annuli or squeezed annuli:
  \begin{definition} \label{def:annulus-free}
    An empire map on the sphere, $(M, A, g)$, with (dual) empire graph $(G, B, h)$ is called \textbf{annulus free} if the dual graph $G$ remains connected after removing any single vertex empire (and all the incident edges associated to each vertex).
  \end{definition}
  
  First we prove a small result about annulus free maps:
  \begin{lemma} \label{lem:annulus-free-remove-connected}
    For an annulus free $m$-pire map on the sphere, $(M, A, g)$, with (dual) m-pire graph, $(G, B, h)$ and collapsed $m$-pire graph $G^*$, if you remove any vertex from $G^*$ to obtain a new graph, $H$, then $H$ is connected.
  \end{lemma}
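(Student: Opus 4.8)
The plan is to reduce the connectivity of $H$ to the connectivity of the uncollapsed empire graph $G$ after deleting a single vertex empire, which is exactly the situation the annulus free hypothesis controls. The key observation is that the two operations ``collapse $G$ to $G^*$'' and ``delete all vertices of one empire'' commute, so that $H$ can be exhibited as the collapse of a connected graph; connectivity then follows because collapsing (identifying vertices) can never disconnect a graph.

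First I would fix the vertex $v^*$ that is removed from $G^*$ and let $\alpha \in B$ be the empire it represents. Deleting $v^*$ from $G^*$ corresponds, back in the empire graph $G$, to deleting every vertex belonging to the vertex empire $\alpha$ together with all its incident edges; write $G_\alpha$ for the resulting graph. Since no vertex of any other empire lies in $\alpha$, every empire $\beta \neq \alpha$ still has all of its vertices present in $G_\alpha$, so no empire other than $\alpha$ disappears.

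Second I would check that $H$ is exactly the collapsed graph obtained from $G_\alpha$. Its vertices are the empires other than $\alpha$, each of which survives in $G_\alpha$ by the previous step. Two such empires $\beta, \gamma$ are joined by an edge in $H$ precisely when they are joined in $G^*$, i.e.\ when some edge of $G$ runs between a vertex of $\beta$ and a vertex of $\gamma$; since neither endpoint lies in $\alpha$, this is the same as their being joined in $G_\alpha$. Hence collapsing $G_\alpha$ produces $H$, the deletion of duplicate edges and loops in the collapse being irrelevant to connectivity. By the annulus free hypothesis applied to $\alpha$, the graph $G_\alpha$ is connected, and collapsing a connected graph yields a connected graph: given two vertices of $H$, choose representatives in $G_\alpha$, take a path between them (which exists by connectivity), and project this path to a walk in $H$. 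Therefore $H$ is connected.

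The only point requiring genuine care is the second step, namely verifying that deleting $v^*$ after collapsing gives the same graph as collapsing after deleting $\alpha$ from $G$. Once this commutativity is pinned down, the annulus free hypothesis does all the real work and the remainder is the routine fact that a quotient of a connected graph is connected.
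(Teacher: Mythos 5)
Your proposal is correct and follows essentially the same route as the paper: removing a vertex from $G^*$ corresponds to removing the whole vertex empire from $G$, the annulus free hypothesis gives connectivity there, and collapsing (a quotient) preserves connectivity. The only difference is that you explicitly verify the commutativity of deletion and collapse, a step the paper's proof treats as immediate from the construction.
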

  \begin{proof}
    This follows trivially from Definition~\ref{def:annulus-free} since removing a vertex from $G^*$ is equivalent to removing an entire vertex empire from $G$ . Then, since $M$ is annulus free, we have that after removing this vertex empire $G$ is still connected and therefore the collapsed version is also connected but that is simply $H$.
  \end{proof}
  
  Then we can prove the original claim (Claim~\ref{clm:original-claim}) if we only consider annulus-free maps:
  \begin{lemma} \label{lem:original-claim-proof}
    For an annulus free m-pire map on the sphere, $(M, A, g)$, with (dual) m-pire graph, $(G, B, h)$ and collapsed m-pire graph $G^*$, if you remove any vertex from $G^*$ to obtain a new graph, $H$, then $H$ is also a collapsed $m$-pire graph.
  \end{lemma}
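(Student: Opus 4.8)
The plan is to realise $H$ explicitly as the collapse of an $m$-pire graph, exploiting the fact that deleting a vertex of $G^*$ corresponds to deleting an entire vertex empire of $G$, and that the collapsing operation commutes with this deletion.

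First I would fix the empire $E \in B$ corresponding to the removed vertex $v^* \in G^*$, and set $G_E := G - h^{-1}(E)$, the graph obtained from $G$ by deleting every vertex of the vertex empire $h^{-1}(E)$ together with all incident edges. Restricting $h$ to the surviving vertices gives a surjection onto $B \setminus \{E\}$, so $(G_E, B \setminus \{E\}, h)$ is a candidate empire graph, and each of its vertex empires still contains at most $m$ vertices since we have only ever deleted vertices; hence, provided it is simple and connected, it is an $m$-pire graph in the sense of Definition~\ref{def:m-pire-graph}. Simplicity is immediate, as $G_E$ is a subgraph of the simple graph $G$, and connectedness is exactly the content of the annulus-free hypothesis (Definition~\ref{def:annulus-free}), which asserts that $G$ remains connected after the removal of any single vertex empire.

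The crucial step is to verify that collapsing $G_E$ returns precisely $H$. By Definition~\ref{def:collapsed-empire-graph}, two empires $F_1, F_2 \neq E$ are joined in the collapse of $G_E$ if and only if $G_E$ contains an edge between a vertex of $F_1$ and a vertex of $F_2$. Since deleting $h^{-1}(E)$ only removes edges incident to vertices of $E$, no edge joining an $F_1$-vertex to an $F_2$-vertex is disturbed; thus this holds if and only if such an edge already existed in $G$, i.e. if and only if $F_1$ and $F_2$ are adjacent in $G^*$, i.e. if and only if they are adjacent in $H = G^* - v^*$. Consequently the collapse of $G_E$ and $H$ share the vertex set $B \setminus \{E\}$ and the same edge set, so they coincide. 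As $G_E$ is an $m$-pire graph and $H$ is its collapse, $H$ is a collapsed $m$-pire graph, as required.

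I expect the main obstacle to lie in this commutation step: one must check carefully that the edge-identification and loop-removal carried out during collapsing interact correctly with the deletion of a whole empire, so that no spurious adjacencies among the surviving empires are created or destroyed. It is worth emphasising that the annulus-free condition contributes nothing to the adjacency bookkeeping, which is purely combinatorial; it is needed solely to guarantee that $G_E$ is connected and therefore a legitimate empire graph, for without it the deletion could split $G$ into several components and $H$ would fail to be a connected collapsed $m$-pire graph.
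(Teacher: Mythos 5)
Your proof is correct with respect to the paper's literal definitions, but it takes a genuinely different route. You work entirely at the combinatorial level: delete the vertex empire $h^{-1}(E)$ from $G$, observe that the resulting graph $G_E$ is simple, connected (by annulus-freeness, Definition~\ref{def:annulus-free}) and has empires of size at most $m$, and then verify that collapsing commutes with this deletion, so that the collapse of $G_E$ is exactly $H$; your adjacency bookkeeping for this commutation step is sound. The paper instead argues at the level of maps: it ``removes'' from $M$ all countries of the empire corresponding to the deleted vertex and then fills the resulting blank spaces without creating any new boundaries (a pictorial, case-based argument, Figure~\ref{fig:remove-lemma-removing-face-all}), producing a new $m$-pire map on the sphere whose collapsed dual graph is exactly $H$. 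Your argument buys rigour and brevity where the paper's space-filling figures are informal, but the paper's construction delivers something strictly stronger that matters downstream: in the induction of Theorem~\ref{thm:heawood-upper-empire} one must re-apply Lemma~\ref{lem:empire-graph-average-bound}, which holds only for collapsed graphs arising from $m$-pire maps \emph{on the sphere}, so ``collapsed $m$-pire graph'' there implicitly means the collapse of the dual of a spherical $m$-pire map, and an abstract $m$-pire graph such as your $(G_E, B\setminus\{E\}, h)$ does not by itself place $H$ in that class. Fortunately your route recovers the stronger conclusion at essentially no cost: $G_E$ is a subgraph of the dual graph $G$ embedded on the sphere, hence itself embeds on the sphere, and by Remark~\ref{rem:map-dual-graph-parity} any such graph is the dual of some map on the sphere, which inherits the $m$-pire structure; adding that one observation makes your proof fully interchangeable with the paper's.
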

  \begin{proof}
    Since we have restricted $M$ to being annulus free we see that $H$ must be connected by Lemma~\ref{lem:annulus-free-remove-connected}. Thus all we need to do is find an $m$-pire map which when collapsed (via a dual graph) gives exactly $H$. To do this we notice that removing a vertex from $G^*$ is equivalent to removing an entire vertex empire from $G$ which is the same as removing an entire empire from $M$. So assume, without loss of generality, that we are removing an empire labelled $A$, from $M$. Then we can think of this as ``removing" countries from $M$ which belong to $A$ and leaving ``blank spaces". Ideally we would like to fill these ``blank spaces" without creating any new boundaries, then if we collapsed this new map we would be left with $H$. Figure~\ref{fig:remove-lemma-removing-face-all} shows how this can be done in a variety of cases.
  
    \begin{figure}[htb]
      \begin{center}
        \includegraphics[width=\textwidth]{./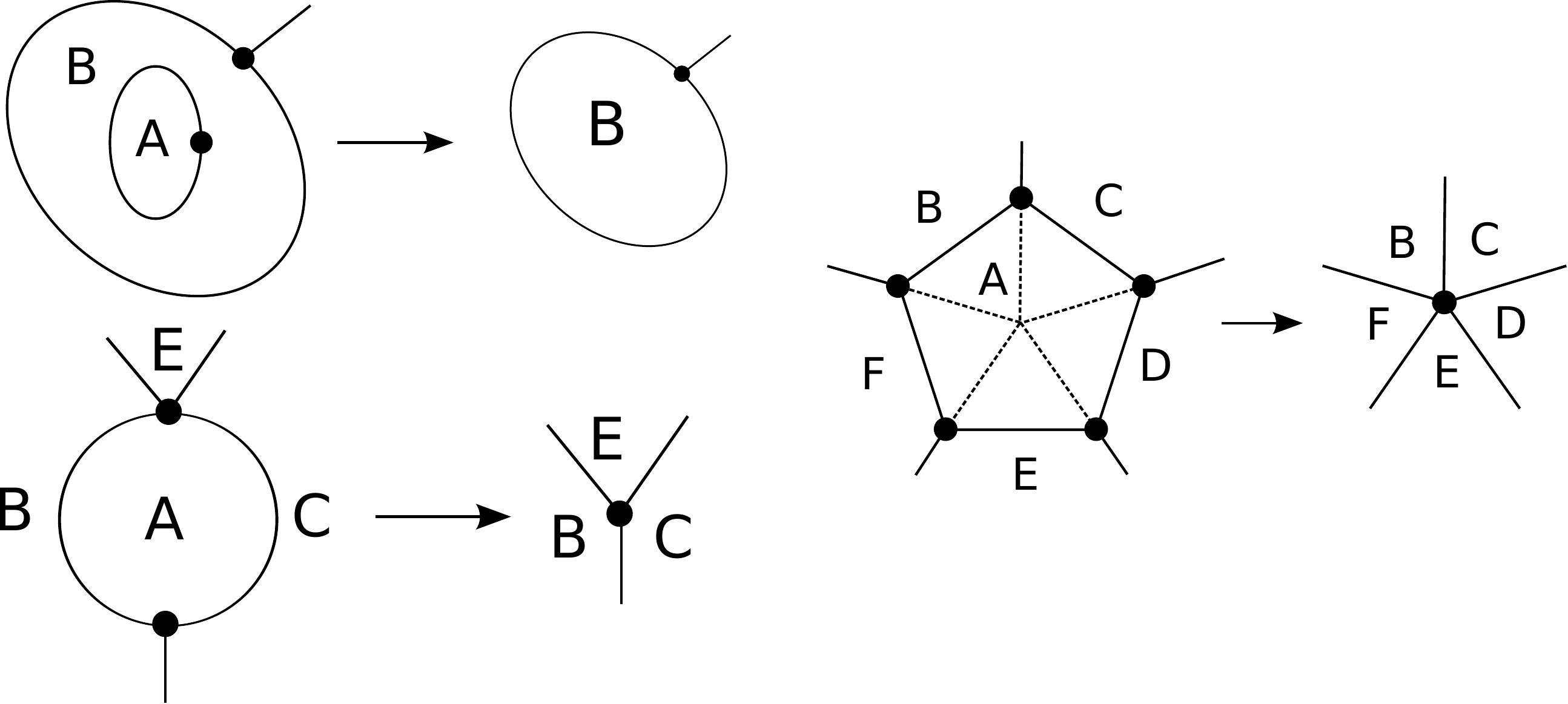}
	  \end{center}  
	  \caption{Filling ``blank spaces" left by ``removing" a country $(A)$ from a map.}
	  \label{fig:remove-lemma-removing-face-all}
    \end{figure}
  
    Thus we can simply fill in all the gaps in $M$ to create a new map, and since we have created no new boundaries we have created no new edges in $H$ and since we have created no new countries each empire must still contain at most $m$ countries and so $H$ is a collapsed $m$-pire graph.
  \end{proof}
  
  So we now relax the condition for $M$ to be annulus free, this time we can only prove the weaker claim (Claim~\ref{clm:modified-claim}) but it does hold for all maps:
  \begin{lemma} \label{lem:proof-modified-claim}
    For an $m$-pire map on the sphere, $(M, A, g)$, with (dual) $m$-pire graph, $(G, B, h)$ and collapsed m-pire graph $G^*$ . Remove any vertex from $G^*$ to obtain a new graph, $H$. Then $H$ is a subgraph of a collapsed $m$-pire graph which contains the same number of vertices (as $H$).
  \end{lemma}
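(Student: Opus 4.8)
The plan is to mimic the construction used in the annulus-free case (Lemma~\ref{lem:original-claim-proof}), but to abandon the requirement that the filling create no new boundaries; instead I would allow new boundaries to appear and then argue that this is exactly what costs us equality, leaving only the weaker conclusion that $H$ embeds as a subgraph. As before, removing a vertex from $G^*$ corresponds to deleting an entire empire (call it $A$) from the map $M$, which leaves a collection of ``blank spaces'' where the countries of $A$ used to sit. The vertices of the collapsed graph of any map rebuilt on what remains are precisely the empires of $M$ other than $A$, which are exactly the vertices of $H$; so whatever map I reconstruct, its collapsed graph will automatically have the right number of vertices. The real work is therefore in (a) filling the blank spaces so as to obtain a genuine $m$-pire map and (b) checking that no edge of $H$ is lost in the process.

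First I would fill each blank region $R$ by choosing a single country $c$ from among those that border $R$ and extending $c$ to absorb all of $R$. Since $A$ is a proper part of the map there is always at least one such bordering country, and because $R$ is absorbed into an already-existing country this step creates \emph{no} new country and \emph{no} new empire. Consequently every surviving empire still contains at most $m$ countries, so the reconstructed map $M'$ is a valid $m$-pire map and its collapsed graph $G'^{*}$ is a genuine collapsed $m$-pire graph (Definitions~\ref{def:collapsed-empire-graph} and~\ref{def:m-pire-graph}). Figure~\ref{fig:remove-lemma-removing-face-all} already records the shapes these fillings take.

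Next I would show that $H$ is a subgraph of $G'^{*}$ on the same vertex set. The boundary of each blank region consists only of former $A$-to-non-$A$ borders, so no border between two surviving empires ever lies inside a blank region; filling only enlarges the chosen country $c$ and can at worst turn a former non-adjacency into an adjacency (when $c$ comes to meet the other countries that bordered $R$). Hence every adjacency of $M$ between two non-$A$ empires survives in $M'$, so every edge of $H$ reappears in $G'^{*}$, while $G'^{*}$ may carry extra edges coming from these newly created contacts. This yields $H \subseteq G'^{*}$ with equal vertex sets, as claimed.

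The main obstacle is precisely the ``squeezed annulus'' configuration of Figure~\ref{fig:squeezed-annulus} that Lemma~\ref{lem:annulus-free-remove-connected} was designed to exclude: when $A$ separates the sphere, deleting it disconnects $H$, so $H$ cannot itself be a connected collapsed $m$-pire graph and the stronger Claim~\ref{clm:original-claim} genuinely fails. The delicate point is that the new adjacencies forced by filling an annular blank region are unavoidable, and that they are exactly the extra edges which reconnect the separated components inside $G'^{*}$; I must ensure the filling is performed so that these are the \emph{only} changes, i.e. that it never merges two surviving empires, deletes a surviving border, or adds a country beyond the $m$-pire budget. Verifying this carefully across the possible boundary patterns of a blank region is where the genuine care lies, after which the vertex count and the subgraph inclusion follow immediately.
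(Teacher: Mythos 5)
Your proposal is correct, but it takes a genuinely different route from the paper. The paper's proof keeps the requirement that filling should ideally create no new boundaries, and therefore goes through a normalisation by cases: annular countries are ``pinched'' into squeezed annuli, countries with more than two neighbours have edges collapsed, double annuli are pinched into multiple squeezed annuli, and in the one irreducible configuration (a squeezed annulus $A1,\dots,A4$ where some $A3$ separates two empires $B$ and $F$) it closes all but one $A$-country and then deletes $A3$ outright, accepting a single new $B$--$F$ boundary; the extra edge in the collapsed graph is exactly why only the subgraph conclusion survives. You instead make the extra edges the organising principle from the start: absorb each blank region wholesale into one chosen bordering country $c$, observe that this creates no new countries or empires (so the result is a genuine $m$-pire map with the same $m$-pire budget and the same surviving empires, hence a collapsed $m$-pire graph on exactly the vertex set of $H$), and that no border between two surviving empires is ever destroyed, so all edges of $H$ persist while the newly forced contacts only add edges. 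This is more uniform and eliminates the paper's case analysis entirely, at the cost of producing messier maps (the absorbing country $c$ may become an annulus, and many spurious adjacencies may appear, where the paper's construction adds at most one); since the inductive argument in Theorem~\ref{thm:heawood-upper-empire} uses only the subgraph inclusion and the vertex count, this looseness is harmless. Two small points deserve care in a full write-up: take the blank regions to be connected components of the union of the removed $A$-countries' closures, so that internal $A$-to-$A$ borders are swallowed and each region is guaranteed at least one surviving bordering country (a single $A$-country can be surrounded entirely by its own empire); and note explicitly that the dual of the rebuilt map remains simple and connected (Lemma~\ref{lem:dual-graph-simple}, plus connectedness of the adjacency structure of countries on the sphere), so that collapsing it does produce a collapsed $m$-pire graph in the sense of Definition~\ref{def:collapsed-empire-graph}.
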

  \begin{proof}
    So, using the same ideas as Lemma~\ref{lem:original-claim-proof}, we want to fill the ``blank spaces". There are several instances where this can go wrong but we can reduce it to a single case, namely when the following two conditions hold:
    \begin{itemize}
      \item Every country is homeomorphic to an open disc
      \item Every country which is part of a squeezed annulus has at most 2 neighbours
    \end{itemize}
    An example of such a map is the left-most map in Figure~\ref{fig:must-create-boundaries}. To show that we can in fact reduce to this single case we need to consider ways in which we can deviate from this type of map. There are 3 ways which we can deviate from this:
    \begin{enumerate}
      \item \textbf{There is a country which is homeomorphic to an annulus:} We can convert this to a squeezed annulus by simply ``pinching it" somewhere as shown in Figure~\ref{fig:annulus-to-squeezed}.
      \item \textbf{A country has more than two neighbours:} In this instance we can simply collapse any extra edges to nothing as shown in Figure~\ref{fig:only-two-neighbours}. (Note that, in this instance, A is not part of a squeezed annulus, however the image merely illustrates different cases of how you can collapse edges.)
      \item \textbf{There is a country homeomorphic to a double-annulus, triple-annulus etc.:} A double-annulus is an open disc with 2 smaller open discs removed, a triple has 3 smaller open discs removed, etc. We can convert these to multiple squeezed annuli by again “pinching" as shown in Figure~\ref{fig:double-to-squeezed}.
    \end{enumerate}
  
  \begin{figure}[htb]
    \begin{center}
      \subfigure[Converting from an annulus to a squeezed annulus.]
        {
          \includegraphics[width=0.4\textwidth]{./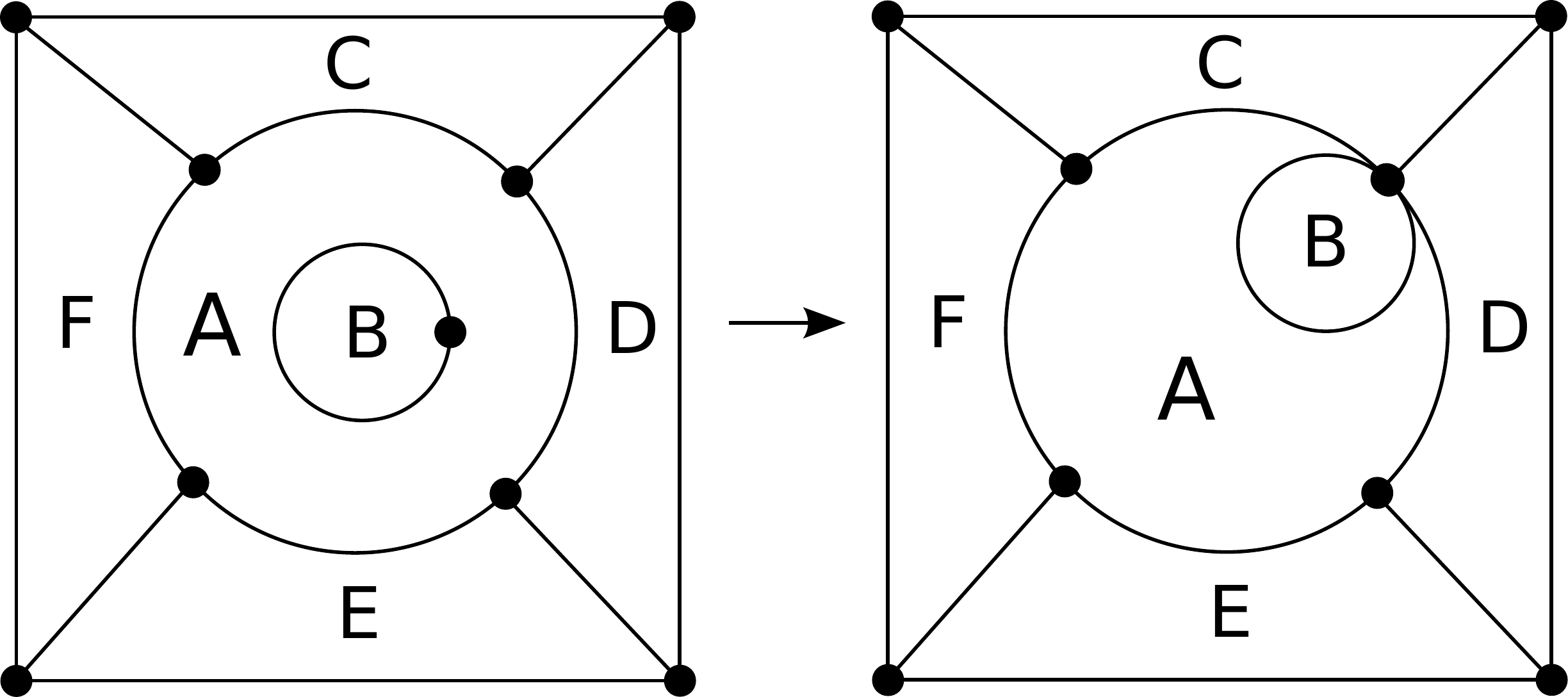}
          \label{fig:annulus-to-squeezed}
        }
      \subfigure[Converting from having multiple neighbours to just two.]
        {
          \includegraphics[width=0.4\textwidth]{./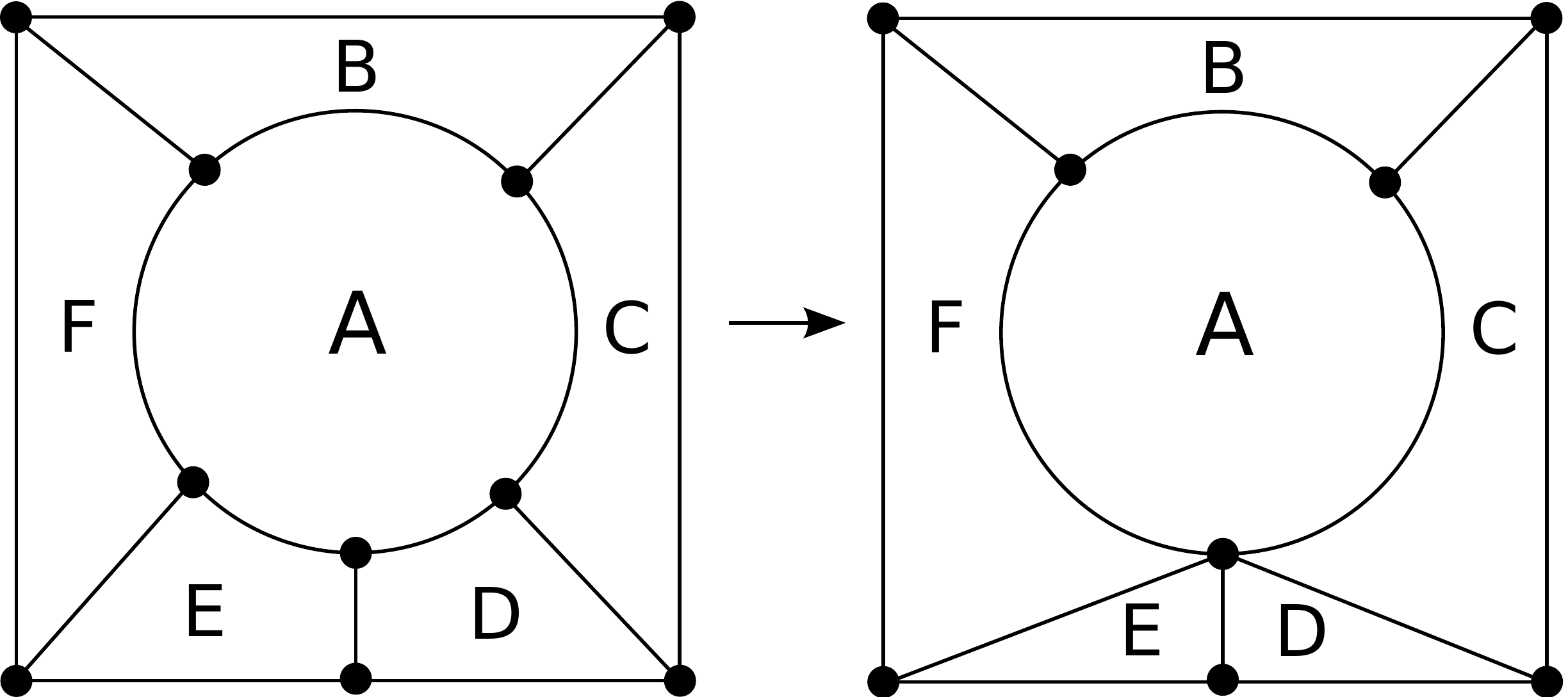}
          \label{fig:only-two-neighbours}
        }
      \subfigure[Converting from a double annulus to two squeezed annuli.]
        {
          \includegraphics[width=0.8\textwidth]{./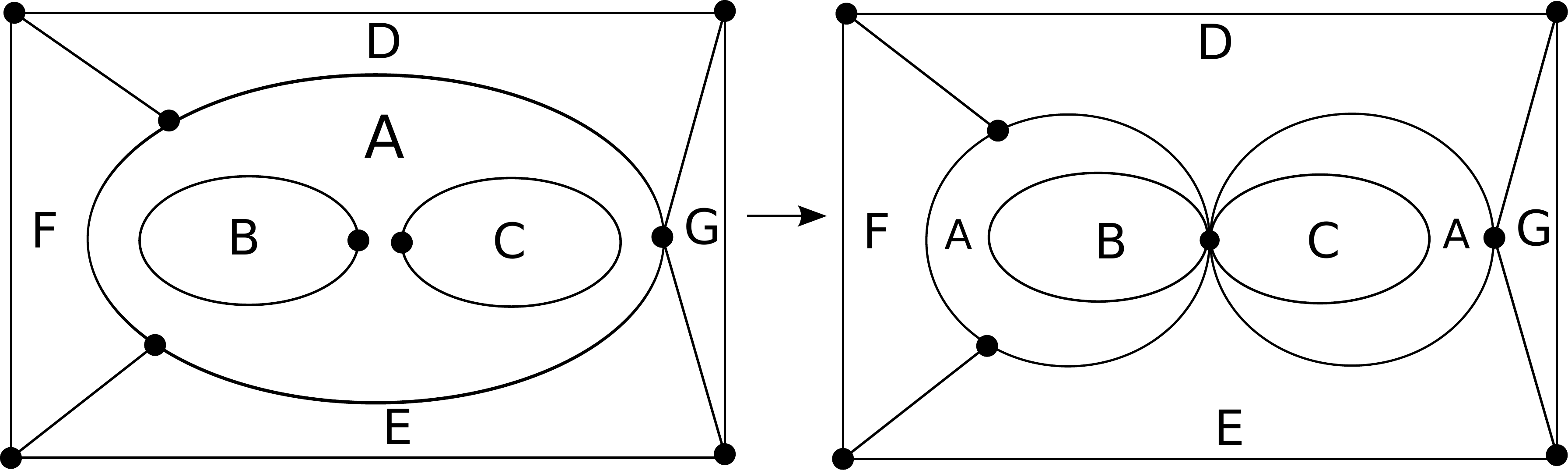}
          \label{fig:double-to-squeezed}
        }
	\end{center}
	\caption{How to convert back to cases covered in Lemma~\ref{lem:proof-modified-claim}.}
	\label{fig:convert-back-cases}
  \end{figure}
  
  \begin{figure}[htb]
    \begin{center}
      \includegraphics[width=\textwidth]{./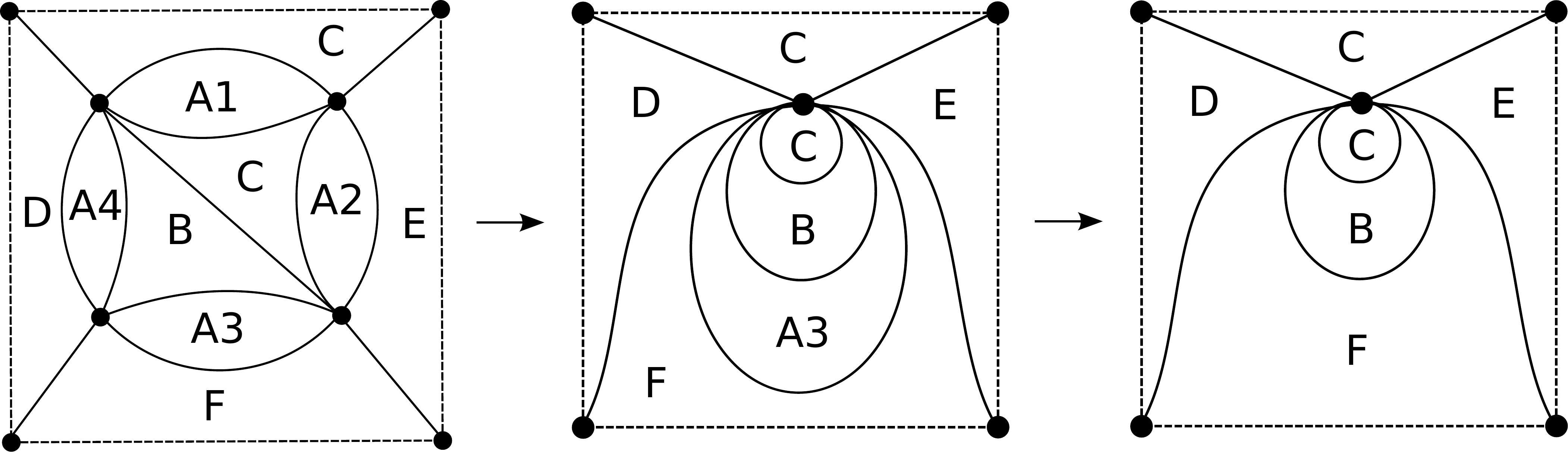}
	  \end{center}
	  \caption{When you are forced to create an extra boundary between empires.}
	  \label{fig:must-create-boundaries}
  \end{figure}

  \begin{remark}
    Note that all of the changes we have demonstrated above can only be done because we are ultimately going to remove the country labelled
A. Otherwise the changes above can create and/or remove boundaries which is not allowed. However since in each case we have only created or removed boundaries relating to the $A$ country, when we remove the associated vertex in the collapsed empire graph we also remove all incident edges. These edges represent the boundaries relating to $A$ and thus on the level of collapsed empire graphs the above changes make no difference.
  \end{remark}
  
  So, now that we have reduced it to this one case, we can show how we would deal with this one case which is demonstrated in Figure~\ref{fig:convert-back-cases}. In this instance we can close all but one (i.e. we close $A1$, $A2$, $A4$) of the countries from the $A$ empire.
  
  \begin{remark}
    In this simple example we could have chosen to collapse all but $A1$ in which case we would have been fine since creating a boundary between
countries from the same empire doesn’t change the collapsed empire graph. However we cannot guarantee that in generality there will be an instance where one of the countries in the $A$ empire will be bounded on each side by a country from the same empire.
  \end{remark}
  
  So we consider the situation as shown where $A3$ separates the $B$ and $F$ empires which may not be neighbouring anywhere else. In this instance we simply remove the $A3$ country completely and create a new boundary. When we collapse this new graph we will not necessarily get $H$, we will get $H$ with a potential extra edge between the vertices representing $B$ and $F$. But note that we have the same number of vertices as $H$ (i.e. we didn’t create any new empires), also we didn’t create any countries so this graph is still a collapsed $m$-pire graph of which H is a subgraph, as required.
  
  \end{proof}

\section{Heawood's Upper Bound}
	In his rather prolific paper of 1890 Heawood proposed the following upper bound for $m$-pire graphs:
	\begin{theorem} \label{thm:heawood-upper-empire}
	  Any $m$-pire map, $M$, on the sphere can be coloured with at most $6m$ colours.
	\end{theorem}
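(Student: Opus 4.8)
The plan is to reduce the problem to colouring graphs and then argue by induction on the number of vertices (that is, the number of empires) of a collapsed $m$-pire graph. As noted in the remark following Definition~\ref{def:collapsed-empire-graph}, an empire map colouring of $M$ with $k$ colours is equivalent to a proper colouring of its collapsed $m$-pire graph $G^*$ with $k$ colours, so it suffices to prove the following statement: \emph{every collapsed $m$-pire graph can be properly coloured with at most $6m$ colours}. I would prove this by strong induction on $V^*$, the number of vertices of the collapsed graph.

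For the base case, if $V^* \leq 6m$ then we may simply assign a distinct colour to every vertex, using at most $6m$ colours in total. For the inductive step, suppose $V^* > 6m$ and that the claim holds for all collapsed $m$-pire graphs with strictly fewer vertices. By Lemma~\ref{lem:empire-graph-average-bound} there is a vertex $v$ of $G^*$ with $\deg(v) \leq 6m - 1$. Remove $v$ (together with its incident edges) to obtain a graph $H$ on $V^* - 1$ vertices. The difficulty here is that $H$ need not itself be a collapsed $m$-pire graph, since deleting an empire can disconnect the dual graph through a squeezed annulus; this is exactly the obstruction that Lemma~\ref{lem:proof-modified-claim} circumvents. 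That lemma tells us $H$ is a subgraph of some collapsed $m$-pire graph $H'$ having the same number of vertices, namely $V^* - 1$. By the induction hypothesis $H'$ can be properly coloured with at most $6m$ colours, and since a proper colouring of a graph restricts to a proper colouring of any subgraph, this yields a proper $6m$-colouring of $H$.

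It then remains to reinsert $v$. Since $\deg(v) \leq 6m - 1$, the vertex $v$ is adjacent to at most $6m - 1$ of the already-coloured vertices, so among our palette of $6m$ colours at least one is not used on any neighbour of $v$. Assigning such a colour to $v$ extends the colouring to all of $G^*$ without introducing a new colour, completing the induction. Translating back through the equivalence between collapsed $m$-pire graphs and $m$-pire maps then shows that $M$ can be coloured with at most $6m$ colours.

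The main obstacle in this argument is precisely the vertex-deletion step: it is tempting to invoke the induction hypothesis directly on $H$, but $H$ may fail to be a collapsed $m$-pire graph, so one cannot assume it is covered by the inductive hypothesis. It is the passage to the slightly larger graph $H'$ guaranteed by Lemma~\ref{lem:proof-modified-claim}, rather than an appeal to the stronger Claim~\ref{clm:original-claim} in full generality, that makes the induction go through; the key observation that rescues the argument is simply that restricting a proper colouring of $H'$ to the subgraph $H$ can only remove adjacency constraints and so remains proper.
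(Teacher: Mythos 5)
Your proposal is correct and follows essentially the same route as the paper's own proof: an induction on the number of vertices of the collapsed $m$-pire graph, using Lemma~\ref{lem:empire-graph-average-bound} to find a vertex of degree at most $6m-1$ and Lemma~\ref{lem:proof-modified-claim} to repair the deleted graph to a colourable collapsed $m$-pire supergraph before reinserting the vertex. Your explicit remark that one restricts the colouring of the supergraph to the subgraph $H$ is exactly the observation the paper makes when it notes that a valid colouring of $L$ also works on $H$.
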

	Unsurprisingly he also produced a proof of this upper bound, as we shall also do:
	\begin{proof}
	  Let $G^*$ be the collapsed $m$-pire graph obtained from $M$. If we can provide the same upper bound the the colours required to colour G* then we are done.
	  
	  We use proof by induction on the number of vertices, $V^*$, in $G^*$ as outlined in Hutchinson \cite{hutchinson}.
	  
	  \textbf{Base Statement:} If $V^* \leq 6m$ then the result is trivial since we can simply colour each vertex a different colour.
	  
	  \textbf{Inductive Step:} So then assume that the theorem holds for all collapsed $m$-pire graphs with fewer than $V^*$ vertices and let $V^* > 6m$.
	  
	  Then find in $G^*$ a vertex $v$ which has degree at most $6m - 1$ (existence guaranteed by Lemma~\ref{lem:empire-graph-average-bound}) and remove it and all it’s incident edges from $G^*$ to obtain a new graph $H$. Then by Lemma~\ref{lem:proof-modified-claim} we can say that $H$ is a subgraph of a collapsed $m$-pire graph, call it $L$. Crucially though, $H$ has fewer than $V^*$ vertices and $L$ has the same number of vertices as $H$, thus by the inductive assumption we can colour $L$ in $6m$ colours, do so. Then note that, since $L$ contains more edges than $H$, any valid colouring of $L$ will also work on $H$ which is simpler to colour, so colour $H$ accordingly.
      Then to this $6m$ colouring of $H$ we add back in $v$ and all its’ incident edges, then since $v$ is adjacent to at most $6m - 1$ other vertices there must be a spare colour which we can use to colour $v$, thus we have coloured $G^*$ in $6m$ colours as required.
  \end{proof}
	
  \begin{corollary} \label{cor:map-upper-bound-6}
    Every map in the plane can be coloured using 6 colours.
  \end{corollary}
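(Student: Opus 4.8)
The plan is to recognise the corollary as the special case $m = 1$ of Heawood's upper bound for empire maps (Theorem~\ref{thm:heawood-upper-empire}), together with the equivalence between maps on the sphere and maps in the plane that was noted earlier in the report.

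First I would observe that every standard map $M$, in the sense of Definition~\ref{def:map}, can be regarded as a $1$-pire map. To do this, take the set of empires $A$ to be the set of countries of $M$ itself and let $g$ be the identity bijection assigning each country to its own singleton empire. Then every empire contains exactly one country, so by Definition~\ref{def:m-pire-map} the triple $(M, A, g)$ is a $1$-pire map. Under this identification an empire map colouring is precisely an ordinary map colouring: the condition that neighbouring countries $\alpha, \beta$ with $g(\alpha) \neq g(\beta)$ receive different colours reduces, since $g$ is injective, to the usual requirement that all neighbouring countries be coloured differently.

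Next I would apply Theorem~\ref{thm:heawood-upper-empire} with $m = 1$. This gives that any $1$-pire map on the sphere can be coloured with at most $6 \cdot 1 = 6$ colours, and hence so can the standard map $M$ under the above correspondence. Finally, to pass from the sphere to the plane, I would invoke the earlier remark that embedding a graph in the plane is equivalent to embedding it on the sphere; thus any map in the plane corresponds to a map on the sphere with identical adjacency and colouring requirements, and the bound of $6$ colours transfers directly.

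The argument is short, and the only points that genuinely require care are the two reductions rather than any new combinatorics. The main thing to verify is that the empire structure placed on a standard map is legitimate, in particular that a $1$-pire colouring collapses exactly to an ordinary colouring, and that the plane/sphere equivalence preserves the neighbouring relation on which colouring depends. Neither step introduces a real obstacle, so I expect the whole corollary to follow immediately once these two identifications are made explicit.
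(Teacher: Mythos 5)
Your proposal is correct and is exactly the derivation the paper intends: the corollary is stated without proof as the immediate specialisation $m=1$ of Theorem~\ref{thm:heawood-upper-empire}, using precisely the identifications you spell out (a standard map as a $1$-pire map with singleton empires, and the sphere/plane equivalence noted earlier in the report). Your version merely makes explicit what the paper leaves implicit, so there is nothing to add.
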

  
  \begin{remark} \label{rem:map-upper-bound-5}
    The above Corollary does indeed provide an upper bound which is non-trivial although in the same paper that Heawood produced Theorem~\ref{thm:heawood-upper-empire} and it's proof he also produced a proof that every map in the plane could be coloured using 5 colours thus rendering it immediately obsolete!
  \end{remark}

  \chapter{Heawood's Empire Conjecture} \label{chp:empire-conjecture}
As demonstrated by Corollary \ref{cor:map-upper-bound-6} and Remark \ref{rem:map-upper-bound-5} the upper bound from Theorem \ref{thm:heawood-upper-empire} as proved by Heawood is not sharp for the case $m=1$ i.e. a ``normal" map with no empires. However Heawood did propose another conjecture that for all $m \geq 2$ the inequality was sharp, he even went to far as to produce an example for $m=2$ however in a similar manner to his previous conjecture the example was so irregular he could not generalise is to all $m>2$. Over the years examples were produced for $m=2,3,4$ and in 1984 Jackson and Ringel proved all cases for $m \geq 5$ thus proving Heawood's conjecture. \newline
In 1997 Walter Wessel managed to produce a shorter and nearly uniform proof for all $m \geq 2$ and it is this that we will be discussing. Inevitably though we will start by introducing soe new concepts and some smaller results that will help us in proving the conjecture.

\section{Complete Graphs II}
  We have already introduced the concept of complete graphs back in Section~\ref{sec:complete-graphs} and Figure~\ref{fig:complete-and-bipartite} shows an example of $K_5$ which is a complete graph on 5 vertices. We now introduce the idea of a decomposition of complete graphs into Hamiltonian paths.
  \begin{definition}
    For a graph $G$, if you take a sequence of vertices such that any two neighbouring vertices (in the sequence) are both incident to a common edge then the union of those vertices and the common edges is called a \textbf{path}. The first and last vertices in the sequence are called the \textbf{endpoints} of the path.
  \end{definition}
  
  \begin{remark}
    Although definitions of what a path is vary from book to book, for the purposes of this report we will only be considering edge-disjoint paths. That is to say no edge appears twice in a single path.
  \end{remark}
  
  \begin{definition}
    A path in a graph $G$ is called a \textbf{Hamiltonian path} if it includes all the vertices of $G$.
  \end{definition}
  
  \begin{figure}[htb]
    \begin{center}
      \includegraphics[width=\textwidth]{./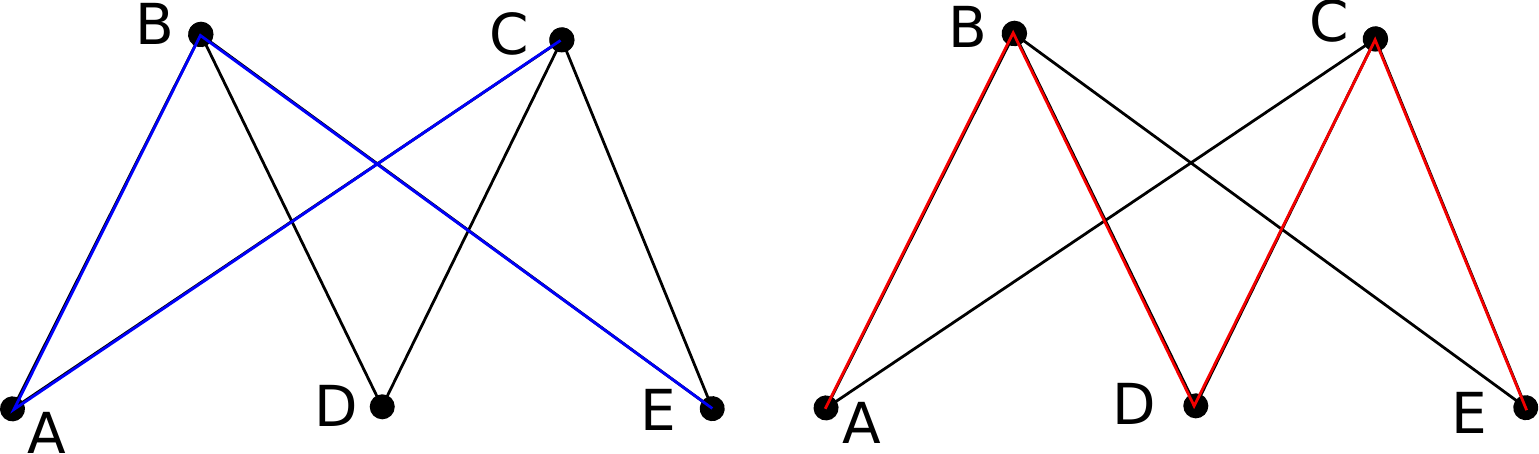}
	\end{center}
	\caption{An example of a path (LHS) and a Hamiltonian path (RHS).}
	\label{fig:path-hamiltonian-path}
  \end{figure}
  
  \begin{example}
    Figure~\ref{fig:path-hamiltonian-path} demonstrates the two kind of paths in the graph $K_{3,2}$. The left-hand graph demonstrates a standard path with endpoints C and E whereas the right-hand graph is an example of a Hamiltonian path with endpoints A and E.
  \end{example}
  
  \begin{remark}
    Note that we do not care about the direction of the path, so to borrow again the example in Figure \ref{fig:path-hamiltonian-path}, the first path can start at C and end at E or equivalently can start at E and end at C.
  \end{remark}
  
  We then look at how you can represent a graph as a union of paths:
  \begin{definition}
    For a graph $G$, a \textbf{graph decomposition} into paths consists of a series of paths $p_1, p_2, ..., p_n$ such that:
    \begin{displaymath}
      \bigcup_{i=1}^{n} p_i = G
    \end{displaymath}
  \end{definition}
  
  \begin{example}
    Once again using the examples in Figure \ref{fig:path-hamiltonian-path}, the two paths would form a decomposition of $G$ since the union of the two paths contain the whole of $G$.
  \end{example}
  
  The most interesting decompositions of graphs are edge-disjoint decompositions, i.e. no two paths in the decomposition share a common edge. Thus in the decomposition of $K_{3,2}$ represented by the two paths in Figure \ref{fig:path-hamiltonian-path} are not edge-disjoint since they share the edge between A and B.
  
  We then introduce a theorem which is proved in \cite[Theorem 11]{bollobas} concerning complete graphs (we provide the theorem without proof):
  \begin{theorem} \label{thm:complete-graph-decomposition}
    All complete graphs on an even number of vertices, $K_{2n}$, can be decomposed into a $n$ edge-disjoint Hamiltonian paths.
  \end{theorem}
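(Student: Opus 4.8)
The plan is to give an explicit rotational construction rather than an induction. I would identify the vertex set of $K_{2n}$ with the cyclic group $\mathbb{Z}_{2n}$, and to each edge $\{i,j\}$ assign a \emph{length} $\ell(\{i,j\}) = \min(|i-j|,\, 2n - |i-j|) \in \{1, \dots, n\}$. The edges of length $\ell < n$ form a single orbit of size $2n$ under the translation $i \mapsto i+1$, while the $n$ edges of length $n$ (the ``diameters'') form an orbit of size $n$. A short count gives $\binom{2n}{2} = n(2n-1)$ edges in total, which is exactly $n$ times the number of edges of a single Hamiltonian path, so any edge-disjoint family of $n$ Hamiltonian paths is automatically a decomposition.

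First I would write down one base path, the zigzag $P_0 = 0, 1, 2n-1, 2, 2n-2, 3, 2n-3, \dots, n$, obtained by alternately taking the next vertex below and above $0$ in $\mathbb{Z}_{2n}$. I would check that $P_0$ visits every vertex exactly once (its vertices are $-k$ and $k+1$ for $k = 0, \dots, n-1$, which together exhaust $\mathbb{Z}_{2n}$) and is therefore a Hamiltonian path with $2n-1$ edges. The remaining paths are the translates $P_j = P_0 + j$ for $j = 0, 1, \dots, n-1$; since translation is a graph automorphism of $K_{2n}$ that preserves all lengths, each $P_j$ is again a Hamiltonian path.

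The heart of the argument is to show the $n$ translates are pairwise edge-disjoint, for then they automatically cover every edge. I would argue length by length. The consecutive differences along $P_0$ are $+1, -2, +3, -4, \dots$, so, reduced modulo $2n$, the length $n$ occurs exactly once and each length $\ell \in \{1, \dots, n-1\}$ occurs exactly twice. For the diameters, the unique length-$n$ edge of $P_0$ sweeps, under the $n$ translates $j = 0, \dots, n-1$, through all $n$ diameters once each. For a short length $\ell$, the key lemma is that the two length-$\ell$ edges of $P_0$, written as $\{c, c+\ell\}$, have base points $c$ differing by exactly $n$ modulo $2n$; granting this, translating the pair over $j = 0, \dots, n-1$ produces base points $\{c, c+1, \dots, c+n-1\} \cup \{c+n, \dots, c+2n-1\} = \mathbb{Z}_{2n}$, that is, every edge of length $\ell$ exactly once.

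I expect the antipodality lemma above to be the only real obstacle: everything else is bookkeeping, but pinning down that the two occurrences of each short length sit exactly $n$ apart requires carefully resolving the raw differences $+1, -2, +3, -4, \dots$ into their reduced lengths and identifying which endpoint is the base point in each case (the small instances $K_4$ and $K_6$ are handy for fixing the indexing conventions). Once that lemma is in hand, the three length-classes partition the edge set and the theorem follows. As a cross-check and an alternative route, one could instead invoke the classical Walecki decomposition of the odd complete graph $K_{2n+1}$ into $n$ Hamiltonian cycles and delete a single vertex: each cycle loses its two edges at that vertex and becomes a Hamiltonian path on the remaining $2n$ vertices, giving the same decomposition.
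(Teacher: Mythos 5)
Your proposal is correct, but there is little in the paper to compare it against: the paper states this theorem explicitly \emph{without proof}, deferring entirely to \cite[Theorem 11]{bollobas}, so you have supplied a self-contained argument where the paper supplies only a citation. What you describe is the classical Walecki-style rotation construction, and it does go through. The one step you flag as open, the antipodality lemma, is true and amounts to a finite two-case check: writing the zigzag as $v_{2k} = -k$, $v_{2k+1} = k+1$ for $k = 0, \dots, n-1$ in $\mathbb{Z}_{2n}$, the odd-indexed edges $\{v_{2k}, v_{2k+1}\} = \{-k, k+1\}$ carry the signed differences $+(2k+1)$ and the even-indexed edges $\{v_{2k-1}, v_{2k}\} = \{k, -k\}$ the differences $-2k$; hence for odd $\ell \leq n-1$ the two length-$\ell$ edges are $\{-(\ell-1)/2, (\ell+1)/2\}$ and $\{n-(\ell-1)/2, n+(\ell+1)/2\}$, while for even $\ell$ they are $\{-\ell/2, \ell/2\}$ and $\{n-\ell/2, n+\ell/2\}$ --- in both cases the base points differ by exactly $n$, as you require. (One small caution on indexing: the final raw difference $\pm(2n-1)$ reduces to length $1$, so the unique diameter edge sits in the \emph{middle} of the zigzag, arising from the difference $\pm n$, not at its end; this affects nothing in the argument.) Combined with your count $\binom{2n}{2} = n(2n-1)$, the three length-classes are each covered exactly once and the $n$ translates form a decomposition. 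Your fallback --- deleting one vertex from Walecki's decomposition of $K_{2n+1}$ into $n$ Hamiltonian cycles --- is equally valid. Beyond self-containedness, the explicit construction buys two things the paper's citation does not: it re-proves Corollaries \ref{cor:endpoint-exactly-once} and \ref{cor:touch-every-other-once} by inspection (the endpoints of the translate $P_j$ are $j$ and $n+j$, so each vertex is an endpoint exactly once), and it mechanically generates the concrete decompositions of $K_4$ and $K_6$ that the paper displays in Figure \ref{fig:K2n-path-split} and then relies on in its constructions for Wessel's theorem.
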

  
  \begin{figure}[htb]
    \begin{center}
      \includegraphics[width=\textwidth]{./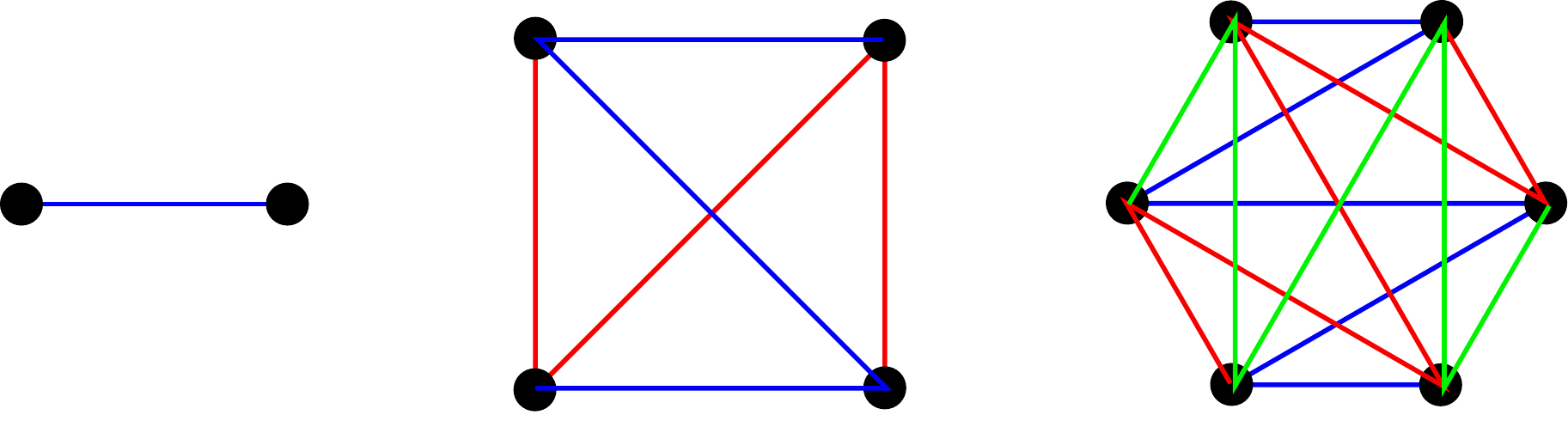}
	\end{center}
	\caption{Decompositions of $K_2, K_4, K_6$ into edge-disjoint Hamiltonian paths.}
	\label{fig:K2n-path-split}
  \end{figure}
  
  \begin{corollary} \label{cor:endpoint-exactly-once}
    In a decomposition of $K_{2n}$ into $n$ edge-disjoint Hamiltonian paths each vertex of $K_{2n}$ is the endpoint of a path exactly once.
  \end{corollary}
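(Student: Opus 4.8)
The plan is to argue by a degree count performed at a single vertex. Fix a vertex $v$ of $K_{2n}$. Because each of the $n$ paths in the decomposition is Hamiltonian, $v$ lies on every one of them, and within any single path $v$ is either an interior vertex or one of the two endpoints. The key observation driving the whole argument is that an interior vertex of a path is incident to exactly two of that path's edges, whereas an endpoint is incident to exactly one.

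First I would let $e(v)$ denote the number of paths in the decomposition for which $v$ is an endpoint, so that $v$ is an interior vertex of the remaining $n - e(v)$ paths. Since the decomposition is edge-disjoint and together the paths account for every edge of $K_{2n}$ — this can be confirmed by the edge count $n(2n-1) = \binom{2n}{2}$, so that the $n$ Hamiltonian paths of $2n-1$ edges each exactly exhaust $K_{2n}$ by Theorem~\ref{thm:complete-graph-decomposition} — the degree of $v$ in $K_{2n}$ equals the sum of the degrees of $v$ taken over the individual paths. This yields
\begin{displaymath}
  \deg(v) = 1 \cdot e(v) + 2 \cdot (n - e(v)) = 2n - e(v).
\end{displaymath}

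Then I would invoke the fact that $K_{2n}$ is complete, so every vertex has degree $2n - 1$. Substituting $\deg(v) = 2n - 1$ into the displayed equation gives $2n - e(v) = 2n - 1$, hence $e(v) = 1$. As $v$ was arbitrary, every vertex of $K_{2n}$ is an endpoint of exactly one path, which is the claim.

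The main obstacle — in truth the only point demanding care — is justifying that the per-path degrees of $v$ sum to its degree in $K_{2n}$. This relies crucially on edge-disjointness, so that no edge incident to $v$ is counted twice, together with the fact that the paths collectively contain all edges of $K_{2n}$, so that none is omitted; both facts are supplied by Theorem~\ref{thm:complete-graph-decomposition}. Once this bookkeeping is in place the rest is a one-line substitution.
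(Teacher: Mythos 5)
Your proof is correct and rests on the same key observation as the paper's: $v$ is incident to exactly one edge of a path it ends and exactly two edges of a path it passes through, and these per-path counts must sum to $\deg(v) = 2n-1$ by edge-disjointness and coverage. The paper packages this as a two-case contradiction (endpoint of at least two paths, endpoint of none) while you solve the single equation $2n - e(v) = 2n - 1$ directly, which is a cosmetic difference only.
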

  \begin{proof}
    We use proof by contradiction. First assume that a vertex $v$ is the endpoint of at least two of the $n$ paths. \newline
    We have that if a vertex is an endpoint of a path it is incident to exactly one edge in the path. If it isn't an endpoint it is incident to exactly two edges in the path. So if we look at the degree of $v$:
    \begin{displaymath}
      \deg(v) \leq 2 + 2(n-2) = 2n-2
    \end{displaymath}
    But all the vertices in $K_{2n}$ have degree $2n-1$ which is a contradiction so $v$ cannot be the endpoint of more than 1 path. So assume that it is not the endpoint of any paths then similarly to above you have:
    \begin{displaymath}
      \deg(v) = 2n \neq 2n-1
    \end{displaymath}
    So therefore $v$ must be the endpoint of exactly 1 path.
  \end{proof}
  
  \begin{corollary} \label{cor:touch-every-other-once}
    In a decomposition of $K_{2n}$ into $n$ edge-disjoint Hamiltonian paths each vertex of $K_{2n}$ is adjacent to every other vertex exactly once.
  \end{corollary}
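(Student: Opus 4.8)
The plan is to exploit the fact that the $n$ paths are edge-disjoint and together exhaust $K_{2n}$, so that their edge sets \emph{partition} the edge set of $K_{2n}$. First I would record that, since $K_{2n}$ is complete, any two distinct vertices $u$ and $v$ are joined by exactly one edge $uv$ of $K_{2n}$; and since by hypothesis the $n$ paths are edge-disjoint and decompose $K_{2n}$ (such a decomposition existing by Theorem~\ref{thm:complete-graph-decomposition}), this edge lies in precisely one of the $n$ paths. Within that single path the vertices $u$ and $v$ are therefore adjacent, and they cannot be adjacent in any other path, for adjacency inside a path requires an edge joining them and there is only one such edge in the whole of $K_{2n}$. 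This already yields the claim: each vertex is adjacent to every other vertex exactly once.

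Alternatively, to mirror the degree-counting style of Corollary~\ref{cor:endpoint-exactly-once}, I would fix a vertex $v$ and total its adjacencies path by path. By Corollary~\ref{cor:endpoint-exactly-once}, $v$ is an endpoint of exactly one path and an interior vertex of the remaining $n-1$; hence it is incident to one path-edge in the former and two path-edges in each of the latter, for a total of
\begin{displaymath}
1 + 2(n-1) = 2n-1
\end{displaymath}
edges used across the decomposition. Edge-disjointness guarantees these $2n-1$ edges are distinct edges of $K_{2n}$, while $v$ has exactly $2n-1$ incident edges in $K_{2n}$ (one to each other vertex). A collection of $2n-1$ distinct edges drawn from a pool of exactly $2n-1$ must be the entire pool, so every edge at $v$ occurs exactly once and $v$ is adjacent to each remaining vertex exactly once.

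I do not expect a genuine obstacle here: the statement is essentially a restatement of the edge-partition property, and either argument closes in a line or two. The only point demanding care is the meaning of ``adjacent'', namely that in a simple subgraph two vertices are adjacent precisely when the unique $K_{2n}$-edge between them belongs to that subgraph; once this is pinned down, both approaches go through immediately. I would most likely present the first argument, as it is the more direct, and perhaps remark that the second recovers the same conclusion purely by counting.
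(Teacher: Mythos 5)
Both of your arguments are correct, and your primary one is a mild but genuine streamlining of the paper's route. The paper proves this corollary by a global edge count: it observes that edge-disjointness rules out repeated edges, then computes that $K_{2n}$ has $\frac{1}{2}2n(2n-1) = n(2n-1)$ edges while the $n$ Hamiltonian paths contribute $n(2n-1)$ path-edges in total, concluding that no edge can be missed. Your first argument short-circuits that counting step: since the paper's definition of a decomposition already demands $\bigcup_{i=1}^{n} p_i = G$, coverage of every edge is immediate, and edge-disjointness gives uniqueness, so each of the $\binom{2n}{2}$ edges lies in exactly one path with no arithmetic required. What the paper's count buys instead is a self-contained consistency check that does not lean on unpacking the definition of decomposition; what your version buys is brevity and the explicit identification of the edge sets as a partition. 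Your second, degree-counting argument ($1 + 2(n-1) = 2n-1$ incident path-edges at a fixed vertex, matched against the $2n-1$ edges at that vertex in $K_{2n}$) is a genuinely different, local route that the paper does not take for this corollary, though it deliberately mirrors the style of the paper's proof of Corollary~\ref{cor:endpoint-exactly-once}, on which it depends; note that it inherits that corollary's implicit assumption that each vertex occurs only once in each Hamiltonian path, whereas your first argument needs nothing beyond simplicity of $K_{2n}$ and edge-disjointness. Your closing care about what ``adjacent'' means in a simple subgraph is exactly the right point to pin down, and either write-up would be acceptable in place of the paper's.
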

  \begin{proof}
    We first note that, by construction, every vertex in $K_{2n}$ is adjacent to every other vertex exactly once so it suffices to show that no edges are repeated or missed out. But since the decomposition is edge disjoint we can never repeat an edge. We can then count the number of edges, $E$, in $K_{2n}$:
    \begin{displaymath}
      E = \frac{1}{2}2n(2n-1) = n(2n-1)
    \end{displaymath}
      Then considering the number of edges in the path decomposition we get that each path has $2n-1$ edges and there are $n$ paths giving $n(2n-1)$ across all paths. Thus since no edge is repeated we must have the same edges and therefore Corollary \ref{cor:touch-every-other-once} holds.
  \end{proof}
  
\section{Introduction to the Proof}
  The intuitive result we are going to prove is that Theorem \ref{thm:heawood-upper-empire} is actually strict, that is there are examples of planar empire maps which require $6m$ colours, i.e. 
  \begin{theorem} \label{thm:planar-empire-maps}
    For all $m \geq 2$ there exists an $m$-pire map which requires $6m$ colours to colour properly and can be embedded on the sphere.
  \end{theorem}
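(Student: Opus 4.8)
The plan is to exhibit, for each $m \geq 2$, an explicit $m$-pire map on the sphere in which $6m$ empires are mutually neighbouring. Since mutually neighbouring empires force distinct colours, such a map requires at least $6m$ colours; combined with the upper bound of Theorem~\ref{thm:heawood-upper-empire}, this shows exactly $6m$ colours are needed. By the duality between empire maps and empire graphs (and Lemma~\ref{lem:maps-equal-dual-graphs}), it suffices to embed the complete graph $K_{6m}$ on the sphere in such a way that each of its $6m$ vertices is realised as a vertex empire of at most $m$ countries; equivalently, to produce a planar empire map whose collapsed empire graph is $K_{6m}$.

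First I would set $n = 3m$ and invoke Theorem~\ref{thm:complete-graph-decomposition} to decompose $K_{6m} = K_{2n}$ into $n = 3m$ edge-disjoint Hamiltonian paths $p_1, \dots, p_{3m}$. The key structural facts are Corollary~\ref{cor:endpoint-exactly-once} (each vertex is an endpoint of exactly one path) and Corollary~\ref{cor:touch-every-other-once} (every adjacency of $K_{6m}$ is realised exactly once across the paths). The idea is to lay out these $3m$ Hamiltonian paths as $3m$ parallel horizontal strips drawn on the sphere, so that within each strip the $6m$ countries appear in the path's order and consecutive countries share a genuine border (an edge of that path). Each of the $6m$ empires then contributes one country to each of the $3m$ strips, giving $3m$ countries per empire --- but this would be a $3m$-pire map, not an $m$-pire map, so the layout must be arranged so that the three countries of a given empire that lie in consecutive strips can be \emph{merged} across the strip boundaries into a single connected region, reducing the count from $3m$ to $m$.

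The heart of the construction, and the step I expect to be the main obstacle, is choosing how the paths are ordered along each strip and how the strips are stacked so that (i) every required adjacency from the Hamiltonian decomposition is genuinely realised as a shared edge, and (ii) the merging of countries across consecutive strip boundaries can be carried out consistently, collapsing each empire to at most $m$ connected pieces without accidentally creating or destroying the neighbouring relations recorded in the $p_i$. Concretely, I would orient each strip's vertex ordering using the endpoint information from Corollary~\ref{cor:endpoint-exactly-once}, pairing up strips so that the endpoint of one path lines up with the matching endpoint of the next, allowing the merge; the bookkeeping that exactly $m$ pieces survive per empire after all merges is the delicate combinatorial part.

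Finally, once the empire map is built, I would verify its correctness directly: by Corollary~\ref{cor:touch-every-other-once} the union of the $3m$ paths realises \emph{every} edge of $K_{6m}$, so after collapsing, every pair of the $6m$ empires is neighbouring. Hence the collapsed empire graph is exactly $K_{6m}$, which has chromatic number $6m$, so the map needs at least $6m$ colours; Theorem~\ref{thm:heawood-upper-empire} supplies the matching upper bound. I would handle $m=2$ as the base illustration (recovering Heawood's own irregular torus-style example in a systematic planar form) and note that the strip-and-merge scheme is uniform in $m$, which is precisely the feature that makes Wessel's argument succeed where Heawood's ad hoc construction could not be generalised.
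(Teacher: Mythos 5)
There is a genuine gap at the heart of your construction: the merging step. You lay out the $3m$ Hamiltonian paths of a decomposition of $K_{6m}$ as parallel strips, so each empire starts with $3m$ countries, and you propose to merge the countries of each empire lying in consecutive strips to cut this to $m$. But for the country of empire $v$ in strip $i$ to merge with the country of $v$ in strip $i+1$, the two regions must overlap along the common strip boundary, and a short interval argument shows that the set of empires merged across any one boundary must occur in the \emph{same relative order} in the two adjacent paths: if $u$ precedes $v$ in strip $i$ but $v$ precedes $u$ in strip $i+1$, then picking a point $x$ in the overlap of the two $u$-intervals and a point $y$ in the overlap of the two $v$-intervals forces $x < y$ and $y < x$ simultaneously. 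Consequently, merging \emph{all} $6m$ empires across a single boundary forces the two adjacent Hamiltonian paths to have identical vertex sequences (up to reversal), hence identical edge sets --- contradicting edge-disjointness. So your plan of grouping strips into triples and merging every empire within each triple is not merely ``delicate'': it is impossible as stated. Any salvage would need an uneven merging scheme supplying $2m$ merges per empire, i.e. $12m^2$ merges distributed over only $3m-1$ boundaries (about two-thirds of all empires merged at a typical boundary), and you give no argument that a decomposition of $K_{6m}$ with such long order-compatible overlaps between consecutive paths exists. Since you explicitly defer exactly this bookkeeping, the proof is missing its central ingredient.

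For comparison, the paper (following Wessel) avoids merging altogether by never putting all $6m$ empires into one decomposition: it takes three \emph{disjoint} copies $A, B, C$ of $K_{2m}$, decomposes each into $m$ edge-disjoint Hamiltonian paths via Theorem~\ref{thm:complete-graph-decomposition}, and declares the $m$ occurrences of a vertex (one per path of its own copy) to be its vertex empire --- so every empire has exactly $m$ countries by construction, with no merging required. Adjacency within a copy is exactly Corollary~\ref{cor:touch-every-other-once}; adjacency between copies is engineered by Construction~\ref{con:empire-graph-case-2}, which joins the endpoints of the $A$-paths to all vertices of the $B$-paths (and cyclically $B$ to $C$, $C$ to $A$), with Corollary~\ref{cor:endpoint-exactly-once} guaranteeing that every $A$-empire reaches every $B$-empire. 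Even $m$ is then handled by pairing off paths and taking disjoint unions, odd $m \geq 5$ needs an extra re-drawing trick to expose the leftover endpoints, and $m = 3$ is settled by an explicit graph. Your instinct to use the Hamiltonian decomposition and the two endpoint corollaries was exactly right, but they must be applied to $K_{2m}$ three times rather than to $K_{6m}$ once.
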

  
  Since any graph which can be embedded on the sphere can also be embedded on the plane (and vice versa), throughout this chapter we will be discussing planar graphs. This is partly for consistency with Wessel and also for brevity of description! Also as before we will deal with graphs as they are easier to work with so we introduce a empire analogue of complete graphs:

  \begin{definition} \label{def:complete-empire-graph}
    A \textbf{complete $m$-pire graph on $n$ empires} is an $m$-pire graph, $(G, A, h)$ which satisfies the following additional conditions:
    \begin{enumerate} [(a)]
      \item $|A| = n$ ($|A|$ denotes the number of elements in $A$)
      \item any two empires (elements of $A$) are neighbouring, i.e. for any $\alpha, \beta \in A$, there exists adjacent vertices $v, w \in G$ such that $h(v) = \alpha$ and $h(w) = \beta$
    \end{enumerate}
    for ease of notation, these graphs will be referred to as $J(n, m)$ graphs.
  \end{definition}
  
  \begin{remark}
    It is important to point out that whilst we have now restricted these graphs quite a lot they are still just a type of graph and do not represent a specific graph in the same way that complete graphs, e.g. $K_5$, do.
  \end{remark}
  
  The reason we have introduced these extra restrictions is that, in the same as $K_n$ always required $n$ colours to colour properly, so we can say the same about $J(n, m)$ graphs:
  \begin{lemma} \label{lem:complete-empire-graphs-number-colours}
    All $J(n, m)$ graphs require $n$ colours to colour properly.
  \end{lemma}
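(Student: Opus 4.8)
The plan is to establish that the number of colours required is exactly $n$ by proving the two inequalities separately, mirroring the classical argument that $K_n$ requires exactly $n$ colours. The whole argument rests on carefully unpacking the definition of an empire graph colouring: a colour is assigned to each \emph{empire} (element of $A$), and the only constraint imposed is that whenever two adjacent vertices $v, w$ satisfy $h(v) \neq h(w)$, the empires $h(v)$ and $h(w)$ must receive distinct colours.

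First I would prove the lower bound, that at least $n$ colours are needed. By condition (b) in Definition~\ref{def:complete-empire-graph}, for any two distinct empires $\alpha, \beta \in A$ there exist adjacent vertices $v, w \in G$ with $h(v) = \alpha$ and $h(w) = \beta$. Since $\alpha \neq \beta$, we have $h(v) \neq h(w)$, so the colouring condition forces $\alpha$ and $\beta$ to be assigned different colours. As this holds for \emph{every} pair of distinct empires, and there are $|A| = n$ of them, all $n$ empires must be given pairwise distinct colours; hence at least $n$ colours are required.

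Next I would prove the upper bound by exhibiting an explicit valid colouring: assign each of the $n$ empires its own distinct colour. To check validity, take any pair of adjacent vertices $v, w$ with $h(v) \neq h(w)$; since distinct empires carry distinct colours by construction, $h(v)$ and $h(w)$ are coloured differently, exactly as the definition demands. The key observation here is that pairs of adjacent vertices lying in the \emph{same} empire impose no constraint at all, so they never obstruct the colouring. Thus $n$ colours always suffice.

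The argument is essentially routine, so there is no serious obstacle to overcome; the one point that genuinely needs care is maintaining the distinction between vertices of $G$ and empires of $A$, and in particular observing that same-empire adjacencies are left unconstrained by the colouring definition. This is precisely what allows the upper bound to go through cleanly. Combining the two inequalities yields that every $J(n, m)$ graph has chromatic number exactly $n$.
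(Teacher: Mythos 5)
Your proof is correct and follows essentially the same approach as the paper: condition (b) of Definition~\ref{def:complete-empire-graph} makes every pair of empires adjacent, forcing all $n$ empires to receive pairwise distinct colours. The paper's proof is just this one-line lower bound; your additional explicit verification that $n$ colours suffice (noting that same-empire adjacencies impose no constraint) is a routine but harmless elaboration of what the paper leaves implicit.
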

  \begin{proof}
    Since each empire neighbours every other empire each empire will require a different colour and there are exactly $n$ empires.
  \end{proof}
  
  So now we would like to draw some graphs in the plane and see if they are $J(n, m)$ graphs. However we would like some sufficient conditions that will allow us to check more easily whether a particular graph is a $J(n, m)$ graph:
  \begin{lemma} \label{lem:complete-empire-sufficient-conditions}
    A graph $G$ is a complete $m$-pire graph on $n$ empires if it satisfies the following conditions:
    \begin{enumerate} [(a)]
      \item $G$ is simple
      \item The vertices of $G$ can be partitioned into exactly $n$ empires
      \item Each vertex empire contains at most $m$ vertices
      \item For any two empires there exist two vertices, one from each, which are adjacent
    \end{enumerate}
  \end{lemma}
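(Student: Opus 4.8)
The plan is to verify directly that the four stated conditions force $G$ to satisfy every requirement in the chain of definitions leading to Definition~\ref{def:complete-empire-graph}: first that $(G,A,h)$ is an empire graph (Definition~\ref{def:empire-graph}), then that it is an $m$-pire graph (Definition~\ref{def:m-pire-graph}), and finally that it meets the two completeness clauses. So the proof is essentially an exercise in unwinding definitions and matching each hypothesis to the clause it supplies; the only data not already present in the bare graph $G$ are the set $A$ and the labelling $h$, which I must first manufacture.

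First I would use condition (b) to produce this missing data. The partition of the vertex set into exactly $n$ blocks defines the set $A$ of empires (one element per block) together with the function $h$ sending each vertex to the block containing it. Because the partition consists of exactly $n$ nonempty blocks, $A$ is non-empty, $|A| = n$, and $h$ is automatically surjective, so clause (a) of Definition~\ref{def:complete-empire-graph} is immediate. Condition (a) of the lemma supplies simplicity of $G$; condition (c) is precisely the bound making $(G,A,h)$ an $m$-pire graph in the sense of Definition~\ref{def:m-pire-graph}; and condition (d) is a verbatim restatement of clause (b) of Definition~\ref{def:complete-empire-graph}, namely that for any two empires $\alpha,\beta$ there exist adjacent vertices $v,w$ with $h(v)=\alpha$ and $h(w)=\beta$. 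Assembling these matches, $(G,A,h)$ satisfies every clause, so $G$ is a complete $m$-pire graph on $n$ empires.

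The one point that genuinely needs care is connectedness, since Definition~\ref{def:empire-graph} builds connectedness into the very notion of an empire graph, yet none of (a)--(d) asserts it outright. I would handle this by observing that condition (d) makes every pair of empires adjacent, so the associated collapsed graph (one vertex per empire) is the complete graph on $n$ vertices, which is connected; this is the natural supporting observation. The hard part is that connectedness of the collapsed graph does not by itself transfer to connectedness of $G$, because individual empires may be internally disconnected. I expect this to be the main obstacle, and I would resolve it by either treating connectedness of $G$ as implicit in the setting (the $J(n,m)$ graphs are exhibited as explicit connected plane drawings) or by noting that for the sole use of this lemma --- the lower bound $n$ on the number of colours in Lemma~\ref{lem:complete-empire-graphs-number-colours} --- connectedness is irrelevant, since the colour count is governed entirely by the pairwise adjacency of empires guaranteed by (d).
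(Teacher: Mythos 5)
Your proposal is correct and takes essentially the same approach as the paper: the paper's entire proof is the single sentence that the lemma ``follows directly from Definitions \ref{def:empire-graph}, \ref{def:m-pire-graph} and \ref{def:complete-empire-graph}'', which is precisely the definitional unwinding you carry out (manufacture $A$ and $h$ from the partition in (b), match (a) to simplicity, (c) to the $m$-pire bound, and (d) to the completeness clause). Your extra care over connectedness flags a real issue the paper silently skips at this point --- Definition \ref{def:empire-graph} does require $G$ connected, and the paper only concedes the tension later, in the remark following Theorem \ref{thm:heawood-equality-empire}, where it admits that Wessel's graphs for most $m$ are disconnected and must be repaired by adding edges --- so your observation that the sole downstream use, the colour lower bound of Lemma \ref{lem:complete-empire-graphs-number-colours}, never invokes connectedness is a sound way to close that gap.
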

  \begin{proof}
    This follows directly from Definitions \ref{def:empire-graph}, \ref{def:m-pire-graph} and \ref{def:complete-empire-graph}.
  \end{proof}
  
  \begin{remark}
    In reality conditions (a) through (c) from Lemma~\ref{lem:complete-empire-sufficient-conditions} are easy to check but it is condition (d) which is hardest to satisfy and also to verify.
  \end{remark}
  
  We now begin our assualt on Theorem~\ref{thm:planar-empire-maps} by proving a few intermediary statements; the first is a special case and also helps illustrate Definition~\ref{def:complete-empire-graph} and Lemma~\ref{lem:complete-empire-sufficient-conditions}:
  \begin{lemma} \label{lem:empire-planar-case-3}
    There exist a $J(18,3)$ graph which is planar.
  \end{lemma}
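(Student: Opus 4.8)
The plan is to exhibit one explicit planar graph $G$ on $54$ vertices and then verify it is a $J(18,3)$ graph by checking the four conditions of Lemma~\ref{lem:complete-empire-sufficient-conditions}: that $G$ is simple, that its vertices partition into $18$ empires, that each empire has at most $3$ vertices, and that every pair of empires contains a pair of adjacent vertices. Conditions (a)--(c) will be immediate from the way $G$ is built ($18$ empires of exactly $3$ countries, $54$ vertices in all), so the whole weight of the argument sits on condition (d) together with planarity. Before constructing anything, I would first record why these two demands are in tension. A simple planar graph on $V=54$ vertices has at most $3V-6=156$ edges, while the number of empire pairs is $\binom{18}{2}=153$; since (d) requires at least one inter-empire edge per pair, at least $153$ of the $156$ available edges must be spent realising distinct adjacencies, leaving a slack of only $3$. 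In other words any valid $G$ is forced to be essentially a triangulation of the sphere, which tells me the construction must be engineered rather than stumbled upon.

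The natural engine for the construction is the Hamiltonian-path machinery just developed. By Theorem~\ref{thm:complete-graph-decomposition}, $K_{18}=K_{2\cdot 9}$ decomposes into $9$ edge-disjoint Hamiltonian paths $P_1,\dots,P_9$, contributing $9\times 17 = 153$ edges, and by Corollary~\ref{cor:touch-every-other-once} these account for each of the $153$ adjacencies of $K_{18}$ exactly once. I would group the nine paths into three triples and give each empire three countries, one per triple. Within a single triple a vertex is interior to at most three paths and so meets at most $2\times 3 = 6$ neighbours, all distinct by edge-disjointness; hence three countries can absorb up to $18$ adjacencies, comfortably covering the degree $17$ that each vertex has in $K_{18}$ (the deficit of one being exactly the single path-endpoint guaranteed by Corollary~\ref{cor:endpoint-exactly-once}). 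This reduces condition (d) to a counting identity that the decomposition supplies for free: every empire pair is adjacent because the corresponding vertices of $K_{18}$ are adjacent in some $P_i$, and that edge is realised between the appropriate country-copies.

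The hard part will be turning this combinatorial skeleton into a genuine planar embedding. A single Hamiltonian path is trivially drawable without crossings, but three edge-disjoint paths on a shared vertex set are generally not simultaneously planar; the freedom to split each empire into three separate countries is precisely what can be used to unknot the crossings, and exhibiting a concrete non-crossing layout of the three ``bands'' (one per triple) with the country-copies identified correctly is the crux. Because of the near-triangulation tightness noted above, there is essentially no room for error, so I expect the embedding to be produced by an explicit, carefully labelled drawing whose planarity and completeness are then read off directly. Once Lemma~\ref{lem:empire-planar-case-3} is established this way, it combines with Lemma~\ref{lem:complete-empire-graphs-number-colours} to force $18=6m$ colours for $m=3$, matching the upper bound of Theorem~\ref{thm:heawood-upper-empire}.
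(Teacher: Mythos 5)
Your opening tension analysis (at least $\binom{18}{2}=153$ distinct inter-empire edges against a planar budget of $3V-6$) is sound and in the spirit of the paper's appendix on slack, but the construction you then sketch is killed by exactly that same counting argument. If the nine Hamiltonian paths of $K_{18}$ are grouped into three triples and each empire gets one country per triple, then the countries of a given triple form an $18$-vertex block, vertex-disjoint from the other two blocks, and this block must carry every edge of its three paths: $3\times 17 = 51$ edges on $18$ vertices. A simple planar graph on $18$ vertices has at most $3\cdot 18-6=48$ edges (Lemma~\ref{lem:simple-graph-three-sides} together with Euler's formula), so each block is non-planar on its own, and no clever drawing can ``unknot'' it: the freedom to split empires into countries is spent the moment you fix the blocking, since within a block all three paths pass through the same single copy of each empire. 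Two edge-disjoint Hamiltonian paths per block is the most that planarity tolerates, which is precisely why the paper's Construction~\ref{con:empire-graph-case-2} handles paths two at a time, why the odd case is patched for $m\geq 5$ using spare copies of the even-case gadget, and why the machinery genuinely breaks at $m=3$ rather than merely being awkward there.

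Beyond this structural failure, even the parts of your outline that are not wrong stop short of a proof: you explicitly defer the crux to ``an explicit, carefully labelled drawing,'' and that drawing is the entire content of the lemma. The paper does not derive the $m=3$ case from path decompositions at all; it simply exhibits Wessel's explicit planar graph (Figure~\ref{fig:heawood-plane-case-3}) and verifies conditions (a)--(d) of Lemma~\ref{lem:complete-empire-sufficient-conditions} by inspection, checking condition (d) empire by empire. Note also that Wessel's graph has one empire with only two countries, so $V=53$ and $3V-6=153$: the graph is forced to be a triangulation in which every edge realises a distinct empire pair, even tighter than your $54$-vertex estimate suggests. To salvage a path-based route you would have to allow the two edges of a single path at a vertex to be assigned to \emph{different} countries of that empire; but at that point the decomposition gives no structure beyond an enumeration of the $153$ edges of $K_{18}$, and the existence of a planar splitting of $K_{18}$ with at most three copies per vertex is exactly the statement you set out to prove.
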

  \begin{proof}
    We simply provide the graph (Figure~\ref{fig:heawood-plane-case-3}) as given in Wessel \cite{wessel-plane}. Note that any vertex which is not incident to any edges should be seen as being adjacent to all the vertices in the boundary of the face it is lying in. Also two vertices which are labelled the same should be viewed as belonging to the same empire.
      
    \begin{figure}[htb]
	    \begin{center}
	      \includegraphics[width=0.7\textwidth]{./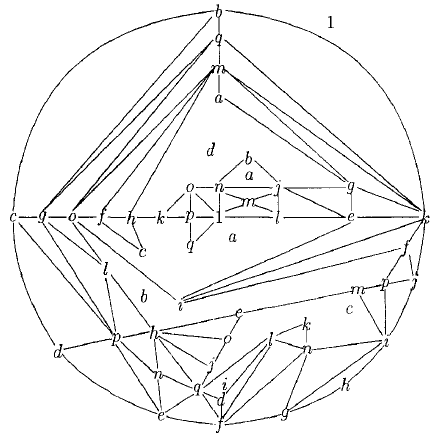}
		  \end{center}
		  \caption{The graph required to prove Theorem \ref{thm:planar-empire-maps} for the case $m=3$.}
		  \label{fig:heawood-plane-case-3}
	  \end{figure}
		
		We should check the conditions of Lemma~\ref{lem:complete-empire-sufficient-conditions}. We can see by inspection that the graph in Figure~\ref{fig:heawood-plane-case-3} is simple, also we can see that it is partitioned into 18 empires, namely $a$ through $q$ and 1. It is also easy to check that each empire contains no more than 3 vertices (in fact each empire contains exactly 3 apart from empire 1 which only contains 2).
		
		The last condition is tedious to check thus we provide an example by considering the $a$ empire. We look at each of the vertices labelled $a$ and check that between them they are adjacent to all the others. So we have the following adjacencies for the vertices:
		\begin{displaymath}
		  (b, n, j); (o, f, c, h, k, p, q, 1, l, e, i); (d, m, g)
		\end{displaymath}
		and we can see that each of the other empires appears there exactly once. Similarly it can be checked for all the other vertices.
		
		Thus we have that the graph provided in Figure~\ref{fig:heawood-plane-case-3} is indeed a planar $J(18, 3)$ graph.
	\end{proof}
  
  We now provide a general way to construct a planar graph out of 6 paths as provided by Wessel \cite{wessel-plane} and for illustrative purposes we show how it can be done in the case that each path contains exactly 4 vertices:
  \begin{construction} \label{con:empire-graph-case-2}
    We take 6 paths, $A, A', B, B', C, C'$, note that $A$ and $A'$ need not be related but are called so for convenience. We then describe a general procedure to construct a general planar graph; note that in all cases the adding of extra edges must be done without crossing:
    \begin{enumerate} [(i)]
      \item Represent these 6 paths in the plane without crossing (Figure \ref{fig:empire-graph-1})
      \item Add extra edges by taking one endpoint from each of the $A$ paths and joining them both to all the vertices in one of the $B$ paths (Figure \ref{fig:empire-graph-2})
			\item Add edges by taking the other endpoint from each of the $A$ paths and joining them both to all the vertices in the other $B$ path (Figure \ref{fig:empire-graph-3})
			\item Repeat the previous two steps but with the endpoints of $B$ joining to the $C$ paths and the endpoints of $C$ joining to the $A$ paths (Figure \ref{fig:empire-graph-4})
	  \end{enumerate}
	  
	  \begin{figure}[htb]
	    \begin{center}
	      \includegraphics[height=0.25\textheight]{./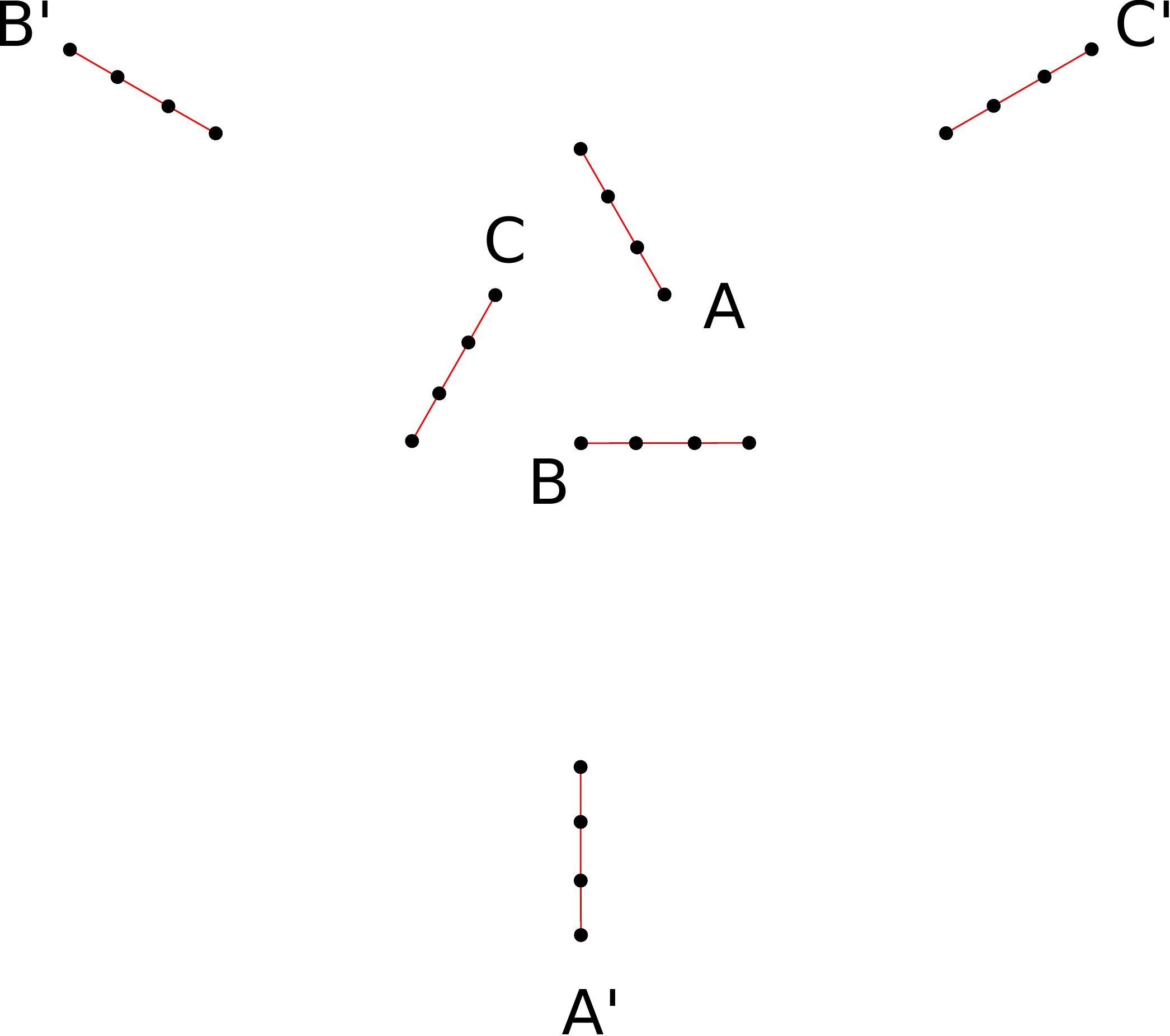}
		\end{center}
		\caption{(Construction \ref{con:empire-graph-case-2}) Representation of 6 paths in the plane without crossing.}
		\label{fig:empire-graph-1}
	  \end{figure}
		
		\begin{figure}[htb]
	    \begin{center}
	      \includegraphics[height=0.25\textheight]{./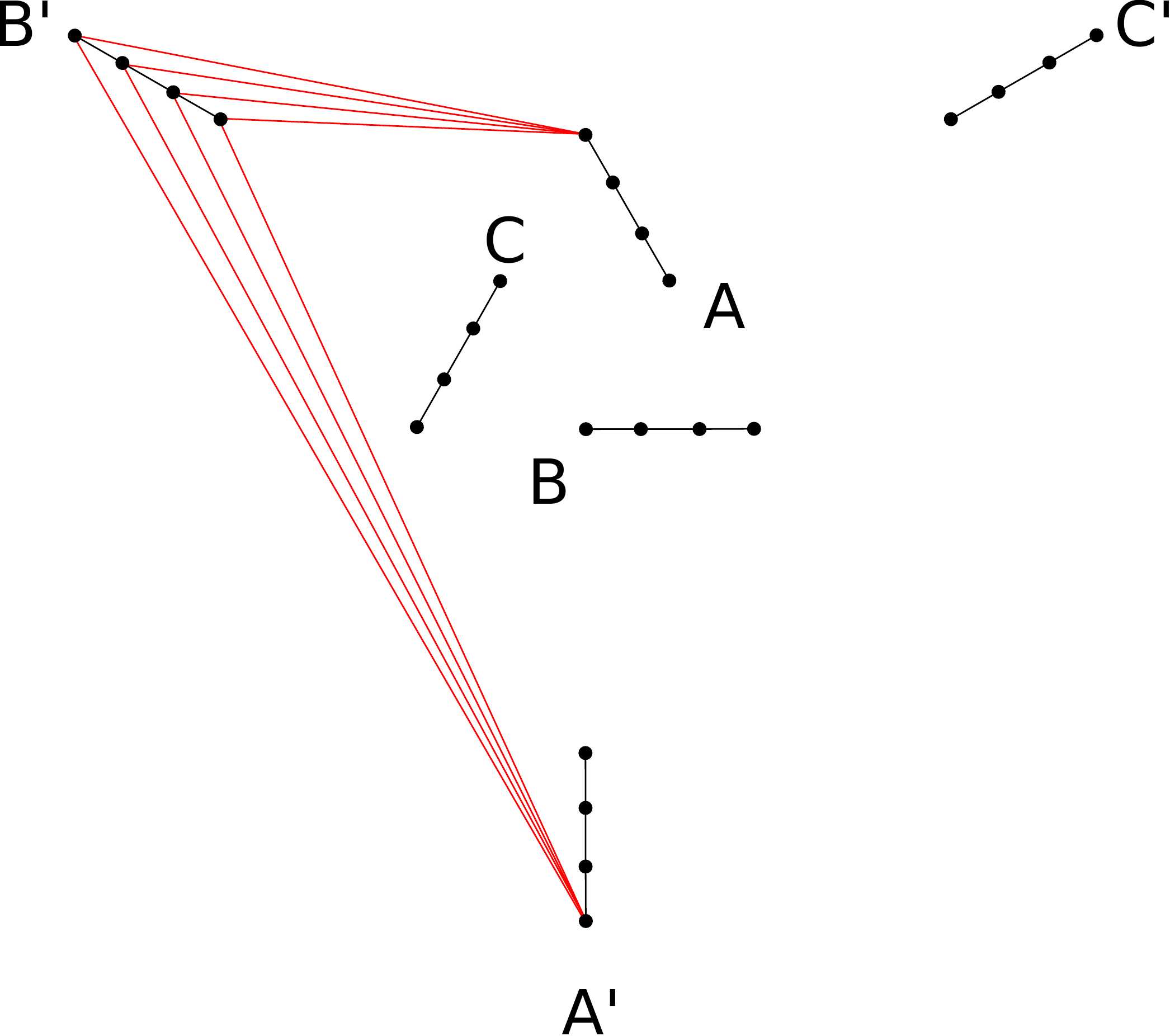}
		\end{center}
		\caption{(Construction \ref{con:empire-graph-case-2}) Joining the endpoints of two $A$ paths with one of the $B$ paths.}
		\label{fig:empire-graph-2}
	  \end{figure}
		
		\begin{figure}[htb]
	    \begin{center}
	      \includegraphics[height=0.25\textheight]{./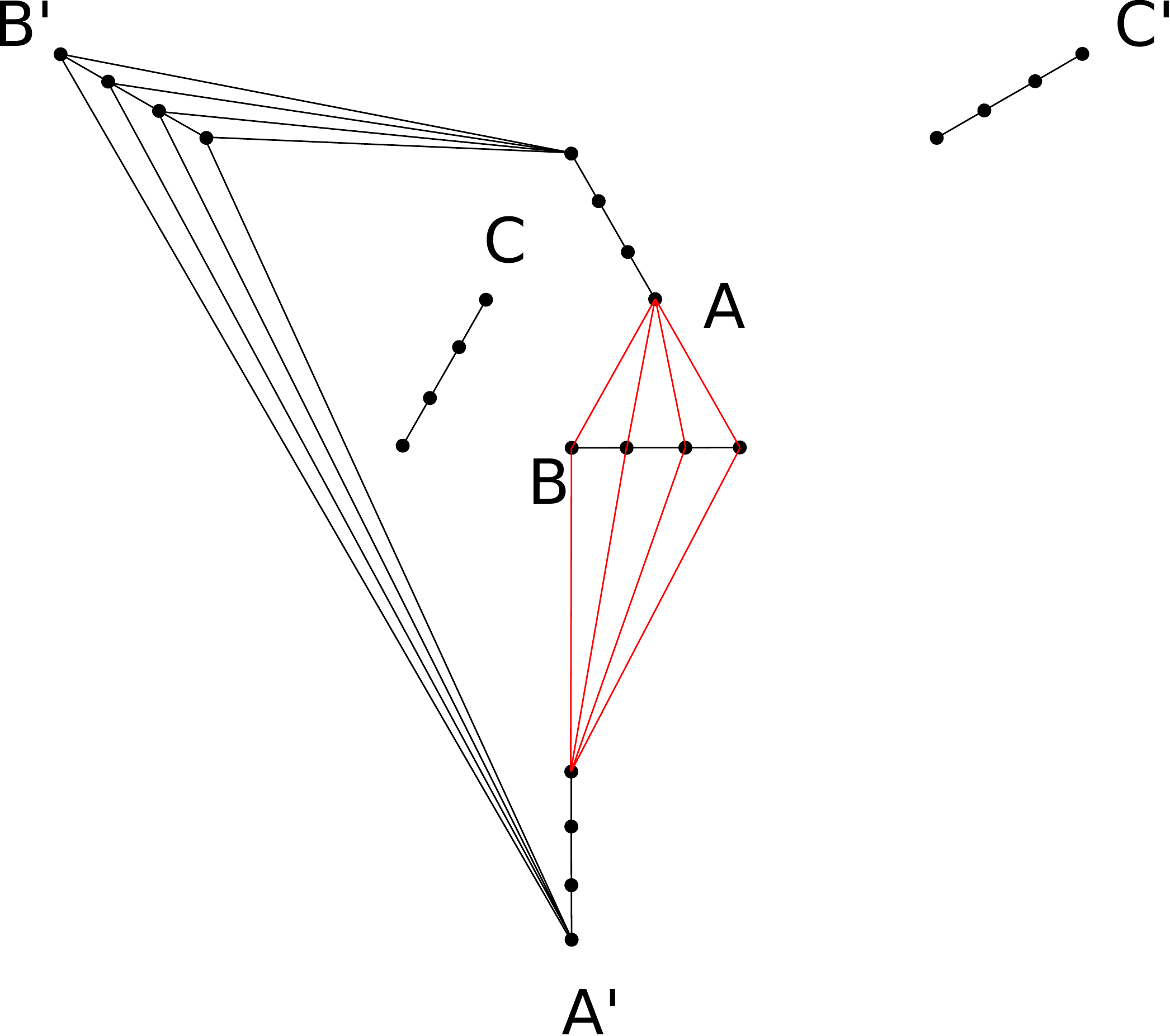}
		\end{center}
		\caption{(Construction \ref{con:empire-graph-case-2}) Joining the other enpoints of the $A$ paths to the other $B$ path.}
		\label{fig:empire-graph-3}
	  \end{figure}
		
		\begin{figure}[htb]
	    \begin{center}
	      \includegraphics[height=0.25\textheight]{./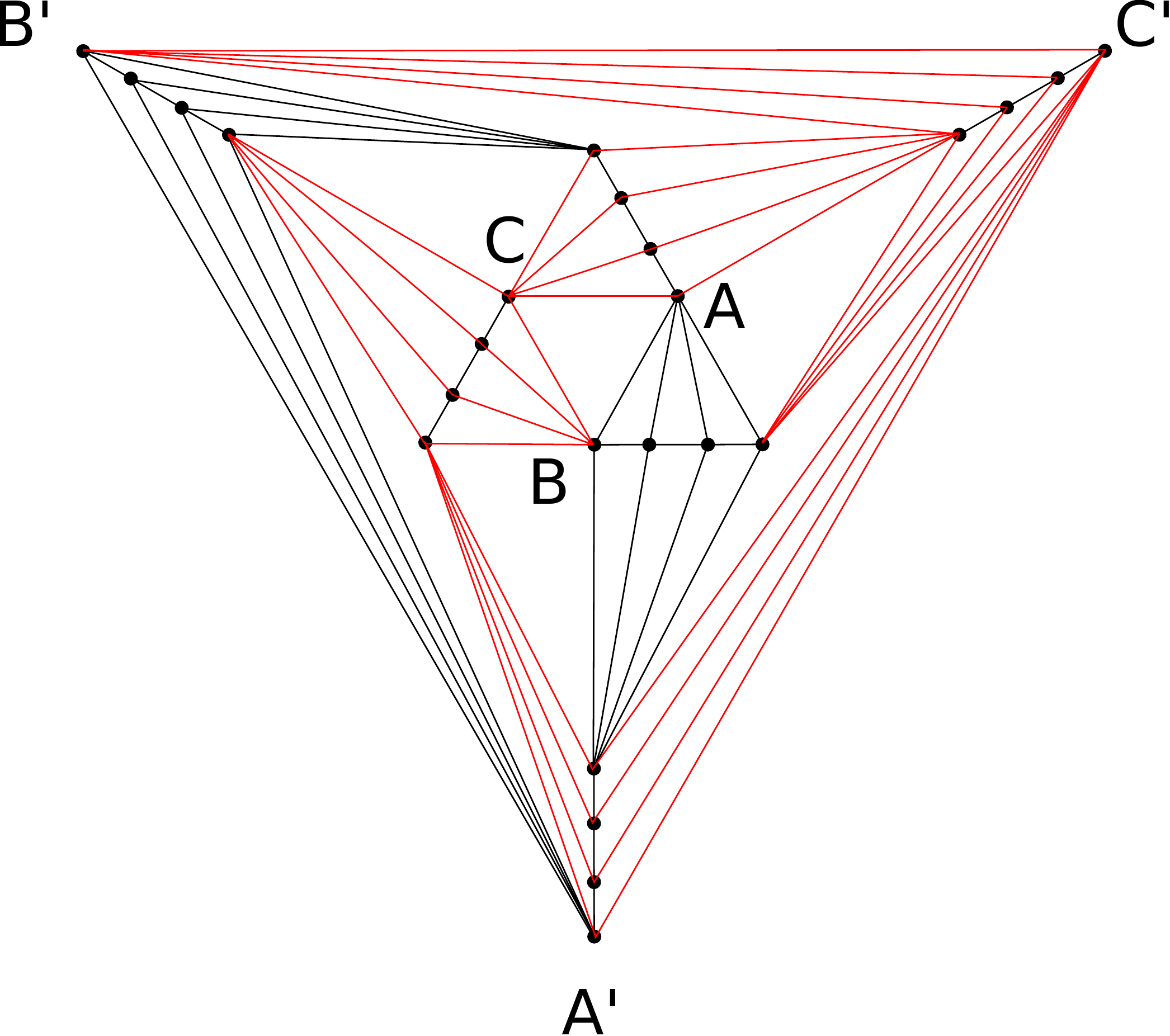}
		\end{center}
		\caption{(Construction \ref{con:empire-graph-case-2}) Repeating the process for $B$ to $C$ and $C$ to $A$.}
		\label{fig:empire-graph-4}
	  \end{figure}
	\end{construction}
	
	\begin{remark}
	  The process as outlined in Construction \ref{con:empire-graph-case-2} is clearly planar. Also the process will work irrespective of the length of any of $A, A', B, B', C, C'$ i.e. each can be arbitrarily large (although finite!) and they can each have a different number of vertices and the construction will still work.
	\end{remark}
  
  We can then use Construction \ref{con:empire-graph-case-2} to prove another case:
  \begin{lemma} \label{lem:empire-planar-case-2}
    There exists a $J(12,2)$ graph which is planar.
  \end{lemma}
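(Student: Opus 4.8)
The plan is to build the desired graph directly with Construction~\ref{con:empire-graph-case-2}, feeding it six carefully chosen paths and then labelling the vertices into empires so that the conditions of Lemma~\ref{lem:complete-empire-sufficient-conditions} can be verified. Since we want $12 = 6 \cdot 2$ empires, each consisting of $m = 2$ vertices, the natural choice is to take three copies of $K_4$ and, by Theorem~\ref{thm:complete-graph-decomposition} applied with $n = 2$, decompose each copy into two edge-disjoint Hamiltonian paths. The first copy supplies the paths $A$ and $A'$, the second supplies $B$ and $B'$, and the third supplies $C$ and $C'$; feeding these into Construction~\ref{con:empire-graph-case-2} immediately yields a planar graph (each path has exactly four vertices, which is precisely the illustrated case). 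Each of the four vertices of a given $K_4$ is declared an empire, and since such a vertex occurs once in each of that copy's two Hamiltonian paths, every empire receives exactly two vertices, giving $12$ empires on $24$ vertices in total.

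It then remains to check the four conditions of Lemma~\ref{lem:complete-empire-sufficient-conditions}. Conditions (a)--(c) are immediate: the graph is simple by construction (no path edge coincides with a cross edge, and no vertex is joined to itself), the vertices partition into the $12$ empires just described, and each empire contains exactly two vertices, hence certainly at most $m = 2$. The substance of the proof, and the step I expect to be the \emph{main obstacle}, is condition (d): that for every pair of empires there are two adjacent vertices, one drawn from each.

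I would split condition (d) into intra-copy and inter-copy adjacencies. For two empires lying in the same copy of $K_4$, the two Hamiltonian paths of that copy together contain every edge of $K_4$, so by Corollary~\ref{cor:touch-every-other-once} the two empires are already adjacent along a path edge. For two empires in different copies I would use the cross edges added in steps (ii)--(iv) of the construction together with Corollary~\ref{cor:endpoint-exactly-once}: the latter guarantees that the four endpoints of a copy's two Hamiltonian paths are precisely that copy's four vertices, each occurring once. Hence when the endpoints of the $A$-paths are joined to all vertices of the $B$-paths, every empire of the first copy (being an endpoint of $A$ or of $A'$) becomes adjacent to every empire of the second copy (each of which appears somewhere along $B$ or $B'$); the cyclic pattern $A \to B \to C \to A$ then delivers the remaining inter-copy pairs symmetrically. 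Since adjacency is symmetric this exhausts all $\binom{12}{2}$ pairs, so (d) holds and Lemma~\ref{lem:complete-empire-sufficient-conditions} certifies the result as a planar $J(12,2)$ graph. The one point demanding care is matching the endpoint assignment of step~(ii) against that of step~(iii) with Corollary~\ref{cor:endpoint-exactly-once}, so that all four endpoints of a copy are genuinely used and no empire is accidentally left unconnected to a neighbouring copy.
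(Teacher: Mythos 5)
Your proposal is correct and follows essentially the same route as the paper: three copies of $K_4$, each decomposed into two edge-disjoint Hamiltonian paths via Theorem~\ref{thm:complete-graph-decomposition}, fed into Construction~\ref{con:empire-graph-case-2} and certified by Lemma~\ref{lem:complete-empire-sufficient-conditions}. The only difference is that where the paper checks condition (d) ``by inspection'' of the resulting figure, you verify it structurally via Corollaries~\ref{cor:endpoint-exactly-once} and~\ref{cor:touch-every-other-once} --- which is precisely the argument the paper itself deploys later for the general even case in Lemma~\ref{lem:empire-planar-case-even}, so if anything your write-up is slightly more rigorous at that step.
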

  \begin{proof}
    We start with $K_4$, then by Theorem \ref{thm:complete-graph-decomposition} we have that we can decompose this into 2 edge-disjoint Hamiltonian paths as demonstrated in Figure \ref{fig:empire-graph-0}.
			
	\begin{figure}[htb]
	  \begin{center}
	    \includegraphics[scale=0.3]{./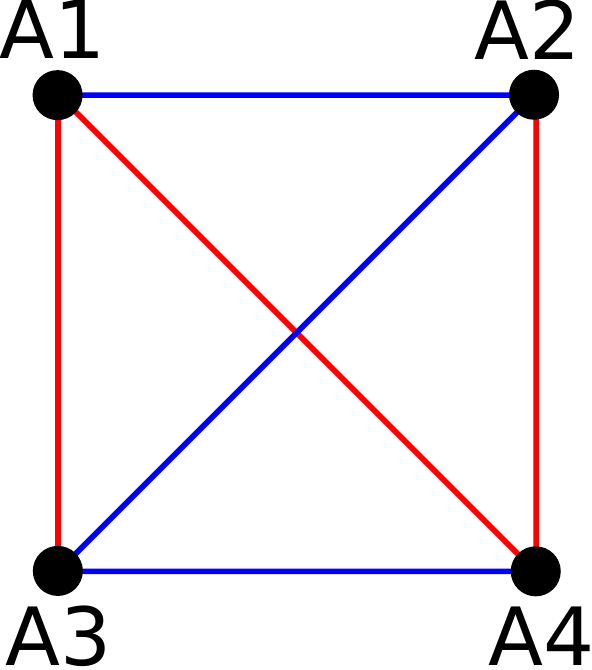}
	  \end{center}
	  \caption{Decomposition of $K_4$ into 2 edge-disjoint Hamiltonian cycles.}
	  \label{fig:empire-graph-0}
	\end{figure}
		  
	We call these two paths $A$ and $A'$ which are both paths on vertices $A_1, A_2, A_3, A_4$. We then take two more copies of $K_4$ and decompose these and call the paths $B, B', C, C'$ and label the vertices similarly to before. We then use Construction \ref{con:empire-graph-case-2} on these 6 paths to produce the graph as in Figure \ref{fig:empire-graph-case-2}.
			
			\begin{figure}[htb]
		    \begin{center}
		      \includegraphics[height=0.25\textheight]{./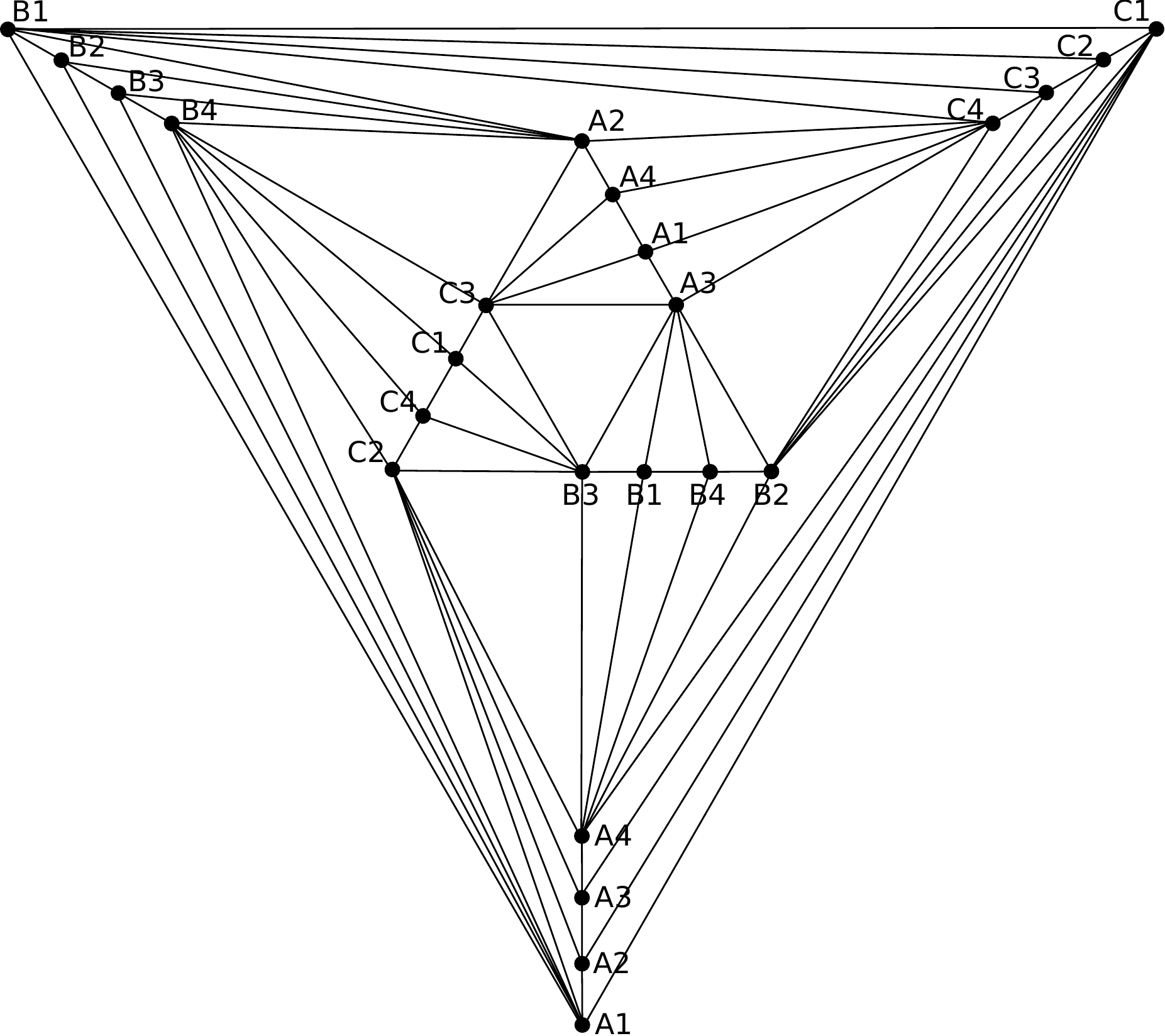}
			\end{center}
			\caption{Graph which proves Lemma \ref{lem:empire-planar-case-2}.}
			\label{fig:empire-graph-case-2}
		  \end{figure}
		  
  The claim is that this graph is the complete empire graph $J(12,2)$ and there are 4 conditions we need to check (from Lemma~\ref{lem:complete-empire-sufficient-conditions}):
    \begin{enumerate} [(a)]
      \item The graph is simple
      \item Its' vertices can be partitioned into exactly $12$ empires
      \item Each vertex empire contains at most $2$ vertices
      \item For any two empires there exist two vertices, one from each, which are adjacent
    \end{enumerate}
    Conditions (a) through (c) are trivially satisfied and by inspection we can see that condition (d) is also satisfied. Thus the graph in Figure \ref{fig:empire-graph-case-2} is indeed $J(12,2)$ and it is clearly planar and so we are done.
  \end{proof}
  
  We can then look at extending this idea to higher $m$ namely:
  \begin{lemma} \label{lem:empire-planar-case-even}
    There exists a $J(6m, m)$ graph which is planar for all $m$ even, $m \geq 2$.
  \end{lemma}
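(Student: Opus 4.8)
The plan is to mimic the $m=2$ argument of Lemma~\ref{lem:empire-planar-case-2}, replacing each copy of $K_4$ by a copy of $K_{2m}$. Concretely, I would take three disjoint complete graphs on $2m$ vertices, calling their vertex sets (the empires) $A_1,\dots,A_{2m}$, $B_1,\dots,B_{2m}$ and $C_1,\dots,C_{2m}$, so that there are $3\cdot 2m = 6m$ empires in all, as a $J(6m,m)$ graph demands. By Theorem~\ref{thm:complete-graph-decomposition} each of these three copies of $K_{2m}$ decomposes into $m$ edge-disjoint Hamiltonian paths. Crucially, since $m$ is \emph{even} I can split the $m$ Hamiltonian paths of the $A$-group into two bundles of $m/2$ paths each, and likewise for the $B$- and $C$-groups; these six bundles will play the roles of the six paths $A,A',B,B',C,C'$ of Construction~\ref{con:empire-graph-case-2}. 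The case $m=2$ is exactly the degenerate situation $m/2=1$, one path per bundle, which recovers Lemma~\ref{lem:empire-planar-case-2}.

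The empire structure is then forced: each vertex $A_i$ of $K_{2m}$ occurs once in each of the $m$ Hamiltonian paths of its group, so the empire $A_i$ consists of exactly $m$ vertices, giving condition (c) of Lemma~\ref{lem:complete-empire-sufficient-conditions} (with equality). Conditions (a) and (b) are immediate: edge-disjointness of the decomposition, together with the fact that cross-bundle edges only ever join distinct groups, keeps the graph simple, and the vertices partition into the $6m$ empires above. For the within-group part of condition (d) I would invoke Corollary~\ref{cor:touch-every-other-once}: the union of the $m$ Hamiltonian paths of one group is the whole of $K_{2m}$, so every pair of empires in the same group is already adjacent.

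The heart of the argument is the cross-group part of condition (d), and here I would lean on the endpoint structure. By Corollary~\ref{cor:endpoint-exactly-once} each empire $A_i$ is the endpoint of exactly one of the $m$ Hamiltonian paths of the $A$-group, so every $A$-empire appears as an endpoint somewhere among the two $A$-bundles. As in steps (ii)--(iv) of Construction~\ref{con:empire-graph-case-2}, I would join each such endpoint to all the vertices of a Hamiltonian path of the $B$-group; since any Hamiltonian path of $K_{2m}$ visits every $B$-empire exactly once, a single such connection makes that $A$-empire adjacent to all $2m$ $B$-empires. Running this for $A\to B$, $B\to C$ and $C\to A$ realises every remaining adjacency, so condition (d) holds and the graph is $J(6m,m)$ by Lemma~\ref{lem:complete-empire-sufficient-conditions}.

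The main obstacle is planarity. In the $m=2$ case the six paths are single strands and Construction~\ref{con:empire-graph-case-2} lays them out without crossings; here each bundle is a sheaf of $m/2$ parallel strands, and I must check that the ``thickened'' construction — joining the $m$ endpoints of a bundle to the vertices of the opposite group's paths — can still be drawn in the plane. I would argue this by nesting: route the joining edges for the innermost pair of strands first and build outwards, exactly as the figures for Construction~\ref{con:empire-graph-case-2} are nested, so that no two families of joining edges cross. Verifying that this nesting genuinely avoids all crossings (and that the assignment of endpoints to opposite paths is consistent around all three groups) is the delicate step; the combinatorial conditions (a)--(d) are, by contrast, routine once the drawing is in place.
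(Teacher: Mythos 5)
Your combinatorial work is exactly the paper's: three copies of $K_{2m}$ decomposed into $m$ edge-disjoint Hamiltonian paths via Theorem~\ref{thm:complete-graph-decomposition}, condition (c) of Lemma~\ref{lem:complete-empire-sufficient-conditions} from each vertex appearing once per path, within-group adjacency from Corollary~\ref{cor:touch-every-other-once}, and cross-group adjacency from Corollary~\ref{cor:endpoint-exactly-once} together with the fact that one fan onto a Hamiltonian path meets every empire of the opposite group. The divergence --- and the gap --- is in the assembly. The paper does not thicken Construction~\ref{con:empire-graph-case-2} at all: it groups the $3m$ paths into $m/2$ sextets, taking \emph{two individual paths} from each of $A$, $B$, $C$ per sextet, applies Construction~\ref{con:empire-graph-case-2} to each sextet separately, and takes the \emph{disjoint union} of the $m/2$ resulting graphs, so planarity is free (a disjoint union of planar graphs is planar) and every fan lives inside its own sextet. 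Your single ``thickened'' drawing, with bundles of $m/2$ parallel strands and fans routed ``by nesting'', is precisely the step you yourself flag as unverified, and it is not routine: each output of Construction~\ref{con:empire-graph-case-2} on six paths of $2m$ vertices has $V = 12m$ and $E = 6(2m-1) + 12\cdot 2m = 36m - 6 = 3V - 6$, i.e.\ it is an edge-maximal planar graph, so there is essentially no slack for threading an inner strand's fan past the closed regions bounded by an outer strand's fan; a fan from an endpoint $v$ onto a whole path $P$ requires $P$ to lie on a single face boundary visible from $v$, and whether a connected, strand-mixing assignment of endpoints to opposite paths admits such a drawing would need a genuine argument, not an appeal to ``build outwards''.

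The repair is immediate: drop the bundles and use the parallel assignment, pairing the paths into $m/2$ disjoint sextets as the paper does; your verification of conditions (a)--(d) then transfers verbatim, since it never uses the bundling. One caveat worth noting: the paper's disjoint union violates the connectivity required of empire graphs by Definition~\ref{def:empire-graph}, a point the paper concedes in a remark after Theorem~\ref{thm:heawood-equality-empire} and patches by joining exterior vertices of different components. So if your nested drawing could actually be completed it would buy a connected graph outright, which is genuinely nicer --- but as written the planarity of the thickened construction is asserted rather than proved, and that is the heart of the lemma.
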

  \begin{proof}
    In a similar fashion to that of Lemma \ref{lem:empire-planar-case-2} we consider the complete graph $A = K_{2m}$ and we decompose it into $m$ edge disjoint Hamiltonian paths. We then take two more compies, $B, C$ and do the same. Then do the following:
    \begin{enumerate} [(i)]
      \item Take two paths from each of the decompositions of $A, B, C$ and apply Construction \ref{con:empire-graph-case-2} to these 6 paths to obtain a graph $G$
      \item Repeat the previous step (using paths which have not yet been used) until there are no paths in the decompositions of $A, B, C$ which are unused
      \item Then let the (disjoint) union of these $\frac{m}{2}$ graphs $G$ be called $G'$
    \end{enumerate}
    
    Then the claim is that $G'$ is in fact the required graph $J(6m,m)$. Again we need to check 5 conditions (from Lemma~\ref{lem:complete-empire-sufficient-conditions}):
    \begin{enumerate} [(a)]
      \item $G$ is simple
      \item Its' vertices can be partitioned into exactly $6m$ empires
      \item Each vertex empire contains at most $m$ vertices
      \item For any two empires there exist two vertices, one from each, which are adjacent
    \end{enumerate}
    
    Condition (a) is satisfied since we never create a loop or join two vertices twice.
    
    Condition (b) is satisfied since we take 3 copies of $K_{2m}$ giving us a total of $6m$ vertices each of which gets copied to create an empire.
    
    Condition (c) is satisfied since we use the $m$ Hamiltonian paths from the decomposition of $K_{2m}$ and as such each vertex only gets copied $m$ times and thus each empire contains at most (in fact in this case, exactly) $m$ vertices
    
    Condition (d) requires a little more work but the first thing to note is that Construction \ref{con:empire-graph-case-2} is symmetric in $A, B, C$ and in fact no where in this proof have we singled out $A, B$ or $C$, thus it is sufficient to check that every vertex in $A$ (say) touches every other vertex in $A$ and also every vertex in $B$. So firstly by Corollary \ref{cor:touch-every-other-once} we have that $A$ does indeed touch every other vertex in $A$ exactly once. Then in Construction \ref{con:empire-graph-case-2} we connect both endpoints of $A$ to all of $B$ so it is sufficient to check that each vertex of $A$ is an endpoint of at least one path, but by Lemma \ref{cor:endpoint-exactly-once} we have that this is indeed the case.
    
    Thus the graph constructed above is indeed $J(6m, m)$ and since it is the disjoint union of planar graphs it is also planar as required.
  \end{proof}
  
  We have now dealt with all even values of $m$ and also $m=3$ and so we now move onto higher odd values of $m$:
  \begin{lemma} \label{lem:empire-planar-case-odd}
    There exists a $J(6m, m)$ graph which is planar for all odd $m$ with $m \geq 5$.
  \end{lemma}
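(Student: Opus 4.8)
The plan is to mimic the even-$m$ argument of Lemma~\ref{lem:empire-planar-case-even} as far as possible and then deal with the parity defect by hand. As before I would take three copies $A,B,C$ of $K_{2m}$ and, using Theorem~\ref{thm:complete-graph-decomposition}, decompose each into $m$ edge-disjoint Hamiltonian paths. In the even case these $m$ paths per letter were grouped into $m/2$ pairs and each triple of pairs fed into Construction~\ref{con:empire-graph-case-2}; the disjoint union of the resulting planar triangulations was the required graph. When $m$ is odd this pairing leaves exactly one unused Hamiltonian path in each of $A,B,C$, and the whole difficulty of the lemma lives in how these three leftover paths are absorbed.

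The first thing I would check is that the obvious fix fails: one cannot simply build a standalone gadget from the three leftover paths $P_A,P_B,P_C$ by fanning both endpoints of each onto the whole next-letter path (as Construction~\ref{con:empire-graph-case-2} does for pairs). Writing $V=6m$ for the vertices of such a gadget, the three Hamiltonian paths contribute $6m-3$ edges and the three fans contribute $3\cdot(2\cdot 2m)=12m$ edges, giving $18m-3$ edges against a planar maximum of $3V-6=18m-6$. So this gadget is non-planar by exactly three edges, and every one of those edges is forced (the two endpoints of $P_A$ appear only as interior vertices in the paired gadgets and so receive no $B$-adjacencies there, hence they must reach all of $B$ inside the leftover gadget). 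The way out is to make the leftover piece bigger rather than smaller: since $m\geq 5$ is odd, $m-3\geq 2$ is even, so I would pair up only $m-3$ of the paths per letter into $(m-3)/2$ ordinary Construction~\ref{con:empire-graph-case-2} gadgets (call their disjoint union $G_0$) and keep three Hamiltonian paths per letter for a single special gadget $G_1$ built from all nine of them.

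The special gadget $G_1$ is the three-paths-per-letter analogue of Construction~\ref{con:empire-graph-case-2}: arrange the nine paths cyclically and fan each of the six $A$-endpoints onto a full $B$-path, each of the six $B$-endpoints onto a full $C$-path, and each of the six $C$-endpoints onto a full $A$-path (two source endpoints per target path). This is exactly the pattern of the explicit planar $J(18,3)$ of Lemma~\ref{lem:empire-planar-case-3}, now with the paths lengthened from $6$ to $2m$ vertices, which the remark after Construction~\ref{con:empire-graph-case-2} tells us the layout tolerates. The crucial point is that the edge budget now works: $G_1$ has $18m$ vertices, $18m-9$ path edges and $36m$ fan edges, a total of $54m-9\leq 54m-6=3(18m)-6$, so there is no longer an arithmetic obstruction to planarity. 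The hard part, and the real content of the lemma, is producing the explicit crossing-free drawing of this nine-path gadget; this is where I expect to spend almost all of the effort, and it is presumably the construction Wessel supplies.

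Finally I would verify the four conditions of Lemma~\ref{lem:complete-empire-sufficient-conditions} for $G=G_0\sqcup G_1$, exactly as in the even case. Simplicity and the facts that there are $6m$ empires each of size $m$ are immediate from the disjoint-union structure. For the adjacency condition (d) I would again use the two corollaries of the decomposition: every pair of same-letter empires is consecutive on exactly one of the $m$ paths (Corollary~\ref{cor:touch-every-other-once}), and since all $m$ paths survive somewhere in $G_0\sqcup G_1$ these internal adjacencies are all realised; and every label is an endpoint of exactly one path (Corollary~\ref{cor:endpoint-exactly-once}), so each label fans onto a full next-letter path in precisely one gadget — a standard gadget if its endpoint-path was paired, and $G_1$ if it was one of the leftover paths — giving all forward cross-adjacencies, with the backward ones following by the cyclic symmetry of the construction. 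This identifies $G$ as a planar $J(6m,m)$ and completes the odd case.
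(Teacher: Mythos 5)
Your reduction and your diagnosis of the naive fix are both sound: the paper likewise pairs off all but one Hamiltonian path per letter, and your edge count showing that a one-path-per-letter leftover gadget with all six fans exceeds the planar bound by exactly three edges is precisely the obstruction that forces something cleverer. But at the crux your argument stops being a proof: the entire content of the lemma is delegated to the asserted existence of a planar nine-path gadget in which both endpoints of every path fan onto a full next-letter path, supported only by the count $54m - 9 \leq 54m - 6$ and an appeal to the $J(18,3)$ figure. An edge count consistent with $E \leq 3V - 6$ does not produce an embedding, and the appeal to Lemma~\ref{lem:empire-planar-case-3} is a misreading: that graph is an ad hoc example (note its empire $1$ contains only two vertices, so it is not the uniform three-paths-per-letter fan gadget), and the remark after Construction~\ref{con:empire-graph-case-2} licenses arbitrary path lengths for six paths, not a different number of paths. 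Worse, your own observation constrains any such gadget: if the endpoint assignment ever closes up a triple of paths $\{A_i, B_j, C_k\}$ whose six fans all stay within the triple, that subgraph has $6m$ vertices and $18m - 3 > 3(6m) - 6$ edges, so whole families of natural assignments are non-planar outright; you give no assignment, and no drawing, for which planarity holds. This is a genuine gap, and it sits exactly where the lemma's difficulty lies.

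The paper escapes by a different idea, which is the one missing from your write-up: condition (d) of Lemma~\ref{lem:complete-empire-sufficient-conditions} only requires \emph{some} vertex of one empire to be adjacent to \emph{some} vertex of the other, so a forced fan need not be realized at the copy of the endpoint sitting in the leftover gadget. The paper keeps exactly one leftover path per letter, fans both $A$-endpoints onto $B$ but only one $B$-endpoint onto $C$ and one $C$-endpoint onto $A$ (this trimmed gadget is comfortably planar), and then realizes the two remaining fans using the \emph{other} copies of the offending empires: one standard paired gadget is redrawn so that the second copy of the endpoint $b$ lies on its exterior and is joined to all of $C$, and a second paired gadget with $c$ exposed is nested inside a suitable face and joined to all of $A$. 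This cross-gadget re-use of copies is also exactly what makes the hypothesis $m \geq 5$ bite: writing $m = 2k+1$, one needs $k \geq 2$ standard gadgets so that $b$ and $c$ can be exposed in distinct ones. To salvage your single-big-gadget route you would have to exhibit an explicit crossing-free drawing of the nine-path gadget for some admissible endpoint assignment (avoiding the closed-triple configurations above); absent that, the proposal does not establish the lemma.
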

  \begin{proof}
    We would like to use an argument similar to that used in the proof of Lemma \ref{lem:empire-planar-case-even} but we do not have an even number of each of the $A, B, C$ paths. We can, however, modify the procedure slightly. So in a similar fashion we consider the complete graph $A = K_{2m}$ and we decompose it into $m$ edge disjoint Hamiltonian paths. We then take two more compies, $B, C$ and do the same. Then since $m$ is odd we have that $m = 2k + 1$ for some $k \geq 2$, we then use $2k$ of each of the $A$, $B$ and $C$ paths and apply the same construction as we used in Lemma \ref{lem:empire-planar-case-even}.
    We now need to add in the last remaining $A, B$ and $C$ paths, ensuring that the endpoints of $A$ are adjacent to the whole of the $B$ path and likewise the $B, C$ endpoints adjacent to the whole of the $C, A$ paths respectively. We use the following procedure (illustrated for the case $m=5$):
  \begin{enumerate}
	\item Select one path from $A, B, C$ and represent them without crossings in the plane (Figure \ref{fig:empire-graph-odd-1})
	\item Add extra edges by connecting both endpoints of $A$ with all the vertices of $B$ (Figure \ref{fig:empire-graph-odd-2})
	\item Add extra edges by connecting one endpoint of $B$ and $C$ with all the vertices of $C$ and $A$ respectively (Figure \ref{fig:empire-graph-odd-3})
  \end{enumerate}
			
  \begin{figure}[htb]
    \begin{center}
      \includegraphics[height=0.25\textheight]{./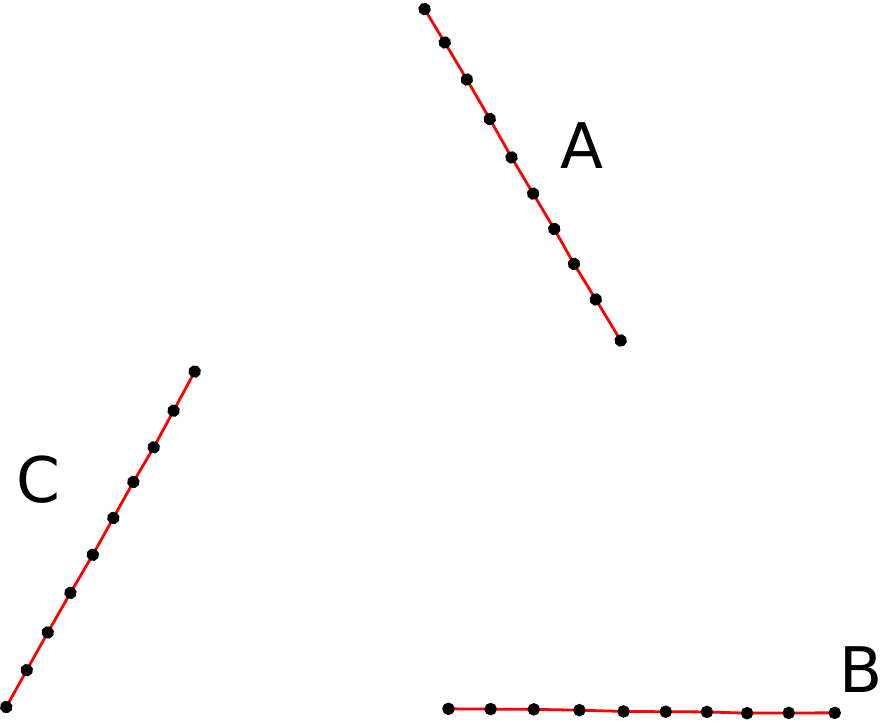}
	\end{center}
	\caption{Drawing one path from each of $A, B, C$ without crossings in the plane.}
	\label{fig:empire-graph-odd-1}
  \end{figure}
	
  \begin{figure}[htb]
    \begin{center}
      \includegraphics[height=0.25\textheight]{./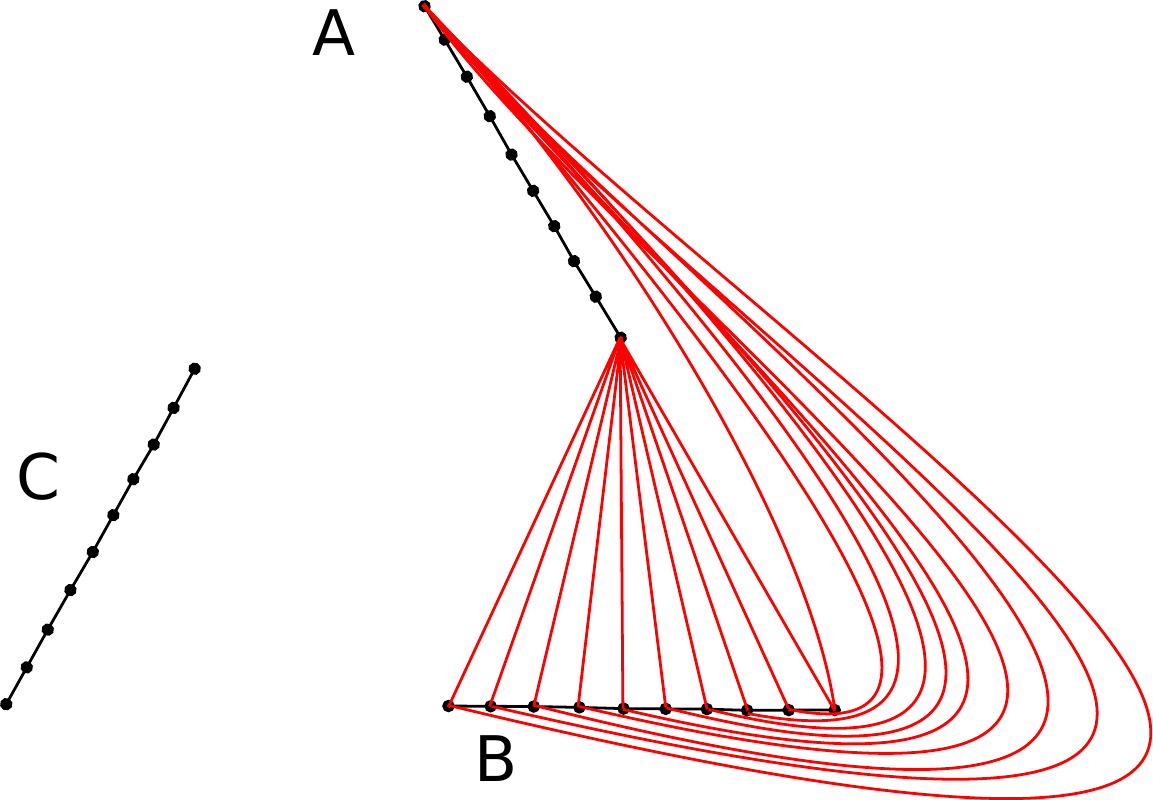}
	\end{center}
	\caption{Connect each endpoint of $A$ with all the vertices of $B$.}
	\label{fig:empire-graph-odd-2}
  \end{figure}
	
  \begin{figure}[htb]
    \begin{center}
      \includegraphics[height=0.25\textheight]{./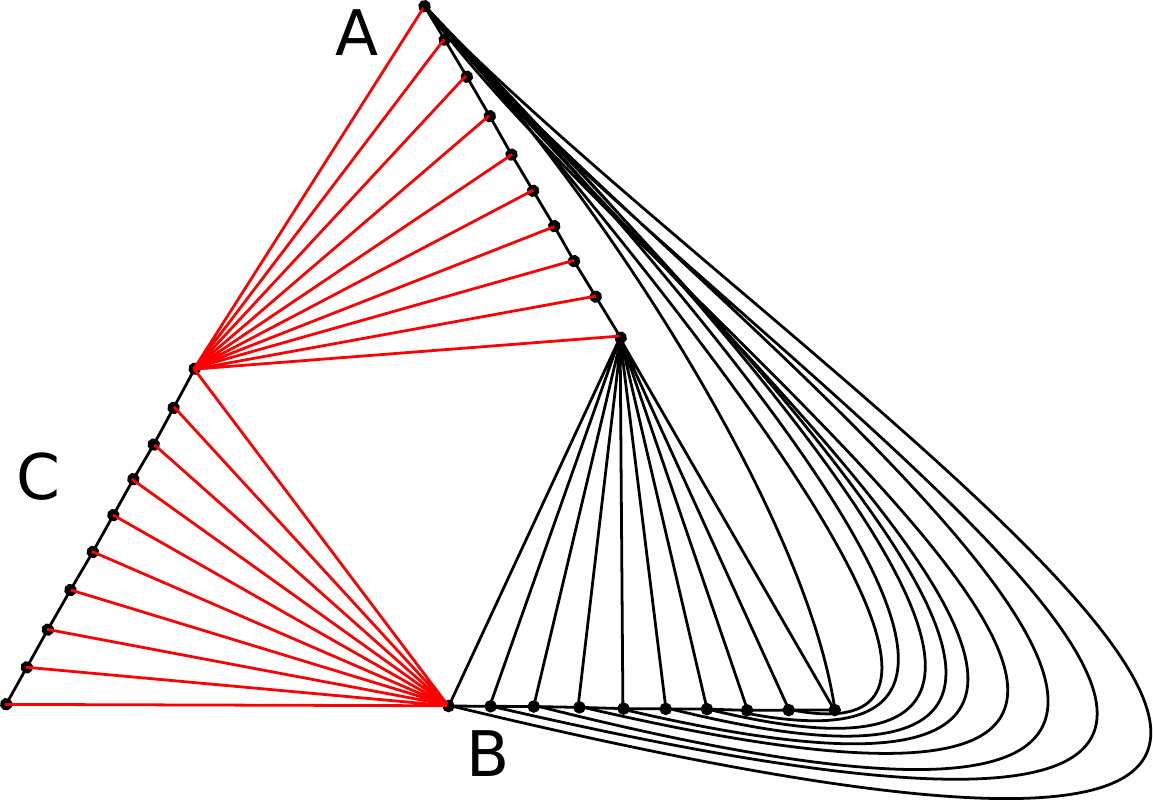}
	\end{center}
	\caption{Connect one endpoint of $B$ and $C$ with all the vertices of $C$ and $A$ respectively.}
	\label{fig:empire-graph-odd-3}
  \end{figure}
  
    We would like to connect the other endpoint of $B$ with all of the vertices of $C$ but we cannot do this on this graph without creating crossings in the plane, we can however connect all the vertices of $C$ to another copy of the endpoint from one of copies of Figure \ref{fig:empire-graph-case-2} which we have created. To do this let the other endpoint of $B$ be denoted by $b$, then we need to redraw (still without crossings) one of the copies of Figure \ref{fig:empire-graph-case-2} so that $b$ is on the exterior, Figure \ref{fig:empire-graph-odd-5} shows how this can be done.
    \begin{remark}
	  We have simply redrawn one edge to expose one more vertex from $B$ which we are taking to be $b$, it should be pointed out that we can always do this since at some point $b$ will be adjacent to an endpoint and we can deliberately chose that path to be drawn on the outside (since we made no specification as to which $B$ path went where) so that we can always do this.
	\end{remark}
	\begin{remark}
	  It should also be mentioned that we are taking a copy of Figure~\ref{fig:empire-graph-case-2} which isn't technically correct as all the paths should contain 10 vertices not 4 but the idea is the same.
	\end{remark}
		  
    So once we have exposed $b$ in this graph we can then connect it to all the vertices of $C$ from our original graph, Figure~\ref{fig:empire-graph-odd-6} shows how this can be done (note we have rotated the graph from the one seen in Figure~\ref{fig:empire-graph-odd-5}). Then finally we would like to connect the final endpoint of $C$ (call it $c$) with all the vertices of $A$, and since we are assuming $m \geq 5$ we know that we have another copy of Figure~\ref{fig:empire-graph-case-2} which we haven't yet used, so we can perform the same trick again to expose $c$. However to connect $c$ to all the vertices of $A$ from our original graph we need to place it inside one of the countries of Figure~\ref{fig:empire-graph-odd-3} and then connect it. Since by this point the graph would be too small to be useful we show in Figure~\ref{fig:empire-graph-odd-7} how this can be done with the large blue circle representing an entire copy of Figure~\ref{fig:empire-graph-case-2} with the edges being specifically connected to the copy of $c$. 
    \begin{remark}
	  Once again we should point out that we know we can expose $c$ by a similar argument to that of exposing $b$ and we can deliberately ensure that the required paths (where $b$ and $c$ are adjacent to an endpoint) are not in the same graph since again there is no requirements on how you pair up the paths from different graphs.
    \end{remark}		
		  	
      \begin{figure}[htb]
	    \begin{center}
	      \includegraphics[height=0.25\textheight]{./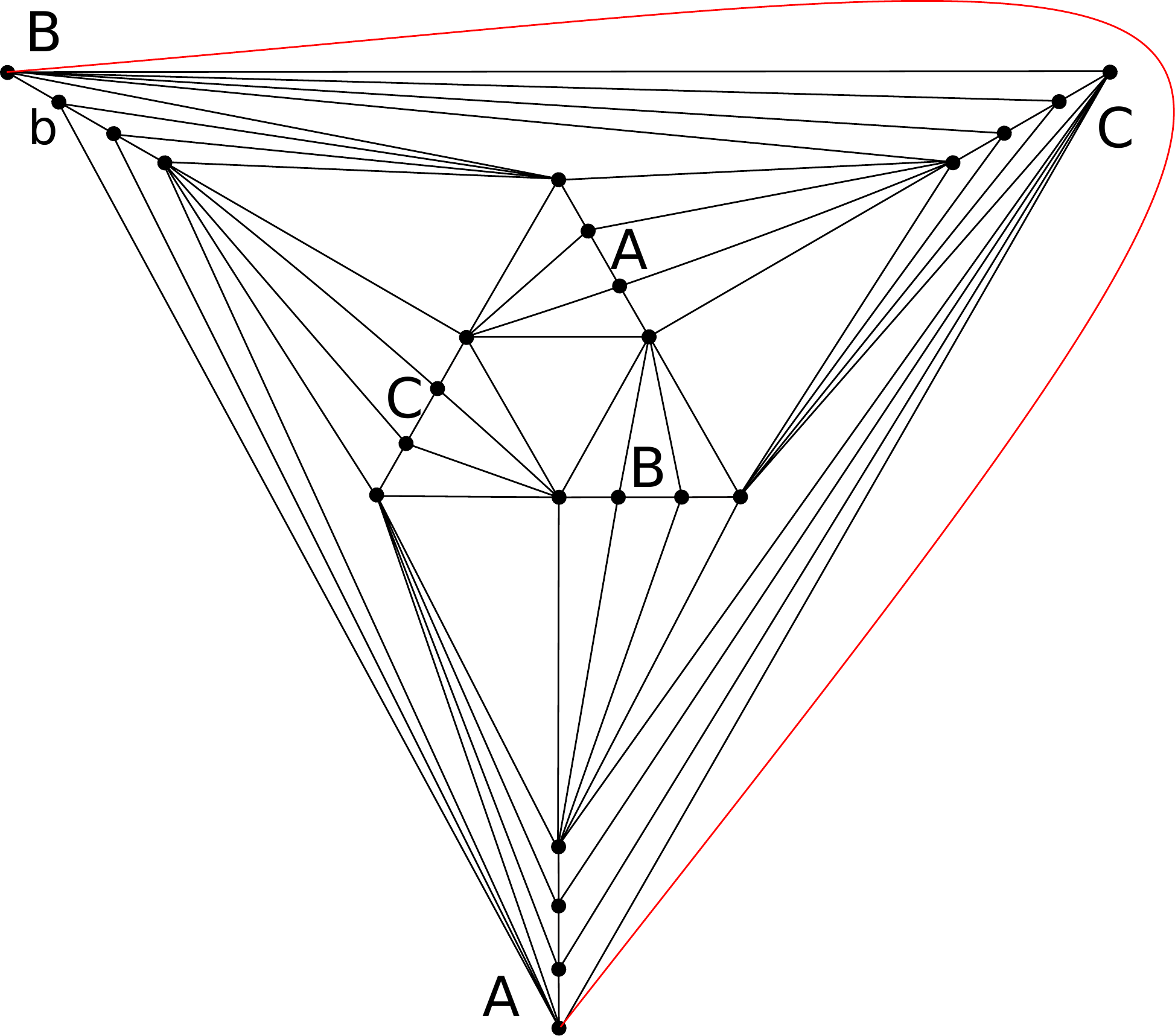}
		\end{center}
		\caption{Redraw a copy of Figure~\ref{fig:empire-graph-case-2} so that $b$ is on the exterior.}
		\label{fig:empire-graph-odd-5}
	  \end{figure}
		
      \begin{figure}[htb]
	    \begin{center}
	      \includegraphics[height=0.25\textheight]{./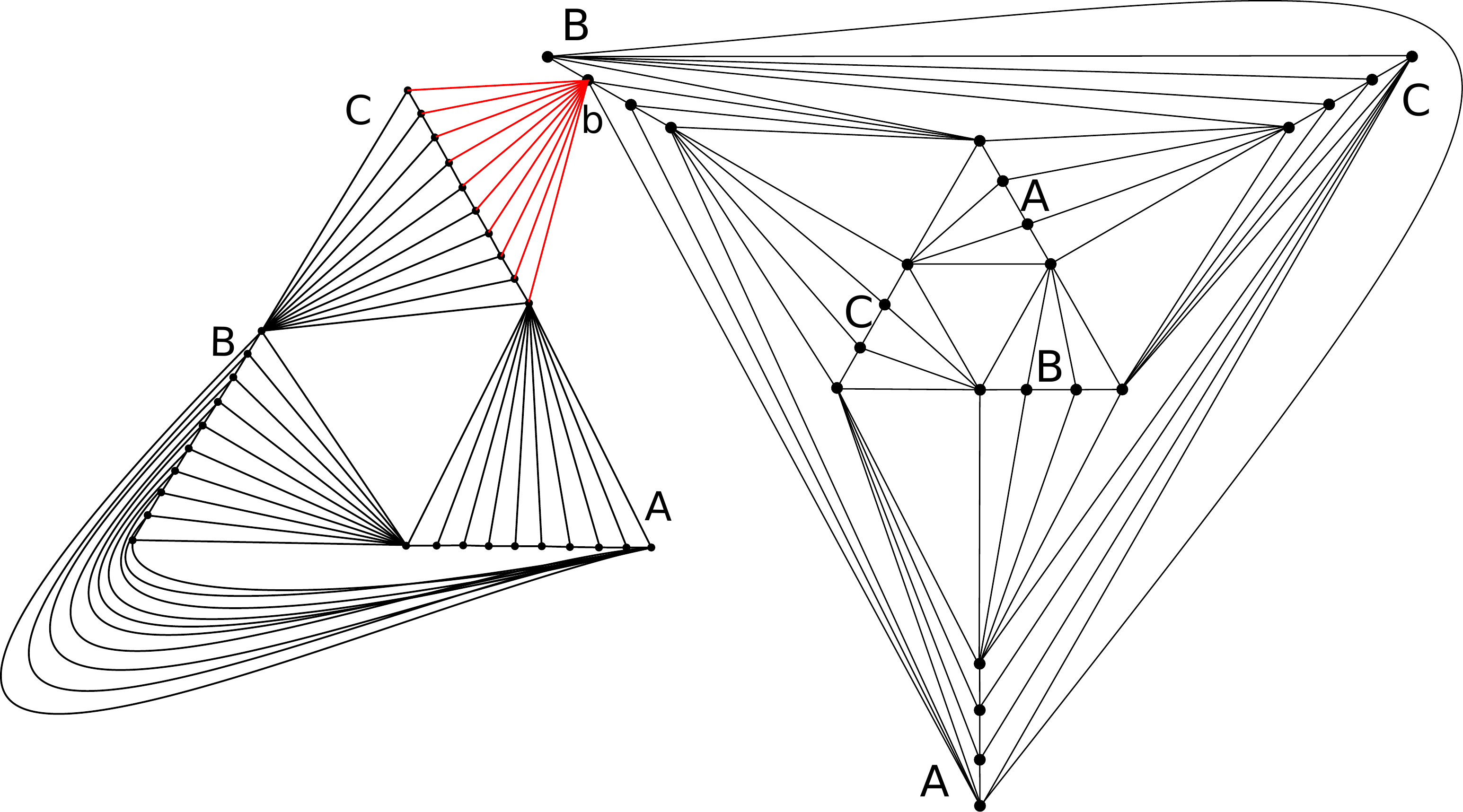}
		\end{center}
		\caption{Connect the exposed copy of $b$ to all vertices of $C$.}
		\label{fig:empire-graph-odd-6}
	  \end{figure}
		
      \begin{figure}[htb]
	    \begin{center}
	      \includegraphics[height=0.25\textheight]{./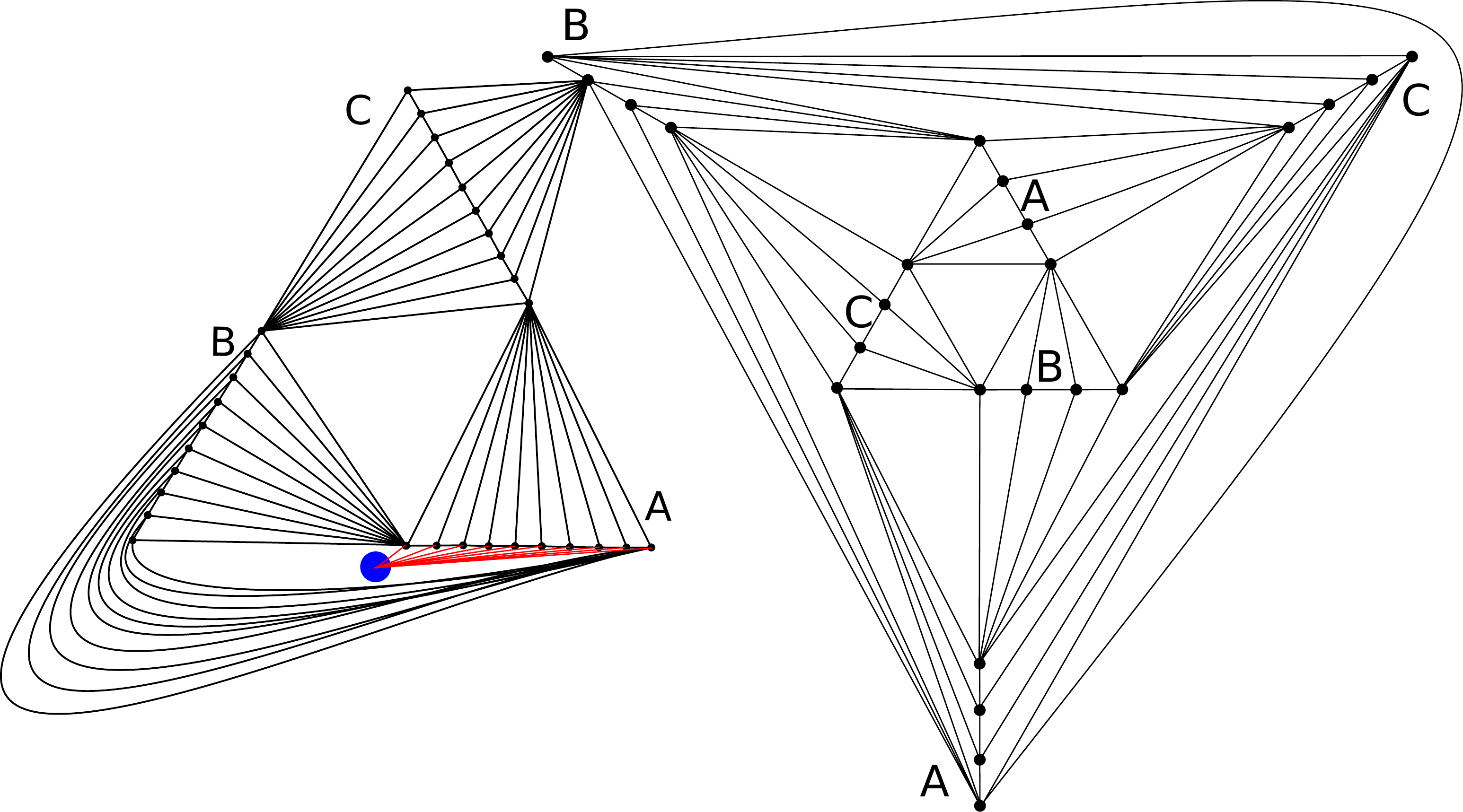}
		\end{center}
		\caption{Place another copy of Figure~\ref{fig:empire-graph-case-2} with $c$ exposed inside Figure~\ref{fig:empire-graph-odd-3} and connect it to all the vertices of $A$.}
		\label{fig:empire-graph-odd-7}
	  \end{figure}
		  
    Thus we have constructed a graph for all $m$ odd with $m \geq 5$, note that this is a single connected graph for $m=5$ and disconnected for all $m \geq 7$. We again need to check conditions (a) through (e) from Definition~\ref{def:complete-empire-graph}, however all the arguments from Lemma~\ref{lem:empire-planar-case-even} still hold since the basic construction principles are used, most importantly:
    \begin{itemize}
      \item The construction is still simple with appropriate degrees of vertices
      \item Paths taken from edge-disjoint Hamiltonian paths of $K_{2m}$ which guarantee the right number of empires
      \item Both endpoints of every $A$ path is adjacent to the whole of one of the $B$ paths a the same with $B, C$ endpoints adjacent to the whole of one of the $C, A$ paths respectively
    \end{itemize}
    Thus we can conclude we have constructed a $J(6m, m)$ graph and it is clearly planar as the union of planar graphs and thus we are done.
  \end{proof}
  
\section{Proof of Heawood's empire conjecture in the plane}
  Thus we are in a position to prove the main theorem which by this stage is reasonably trivial:
  \begin{theorem} \label{thm:heawood-equality-empire}
    $6m$ colours are sufficient to colour any $m$-pire map on a sphere for $m \geq 2$.
  \end{theorem}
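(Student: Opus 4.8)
The statement to be proved is the \emph{sufficiency} (upper-bound) half of Heawood's empire conjecture: that every $m$-pire map on the sphere can be properly coloured with $6m$ colours. This is exactly the assertion of Theorem~\ref{thm:heawood-upper-empire}, whose proof is uniform in $m$; consequently the restriction $m \geq 2$ in the present statement plays no role, the bound in fact already holding for every $m \geq 1$. Rather than merely appeal to that theorem, the plan is to carry out the colouring argument that drives it, since the substance lies there.

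First I would replace the given $m$-pire map $M$ by the collapsed $m$-pire graph $G^*$ obtained from it via Definition~\ref{def:collapsed-empire-graph}. By the correspondence recorded after that definition, assigning one colour to each empire of $M$ so that neighbouring distinct empires differ is the same problem as properly vertex-colouring $G^*$; hence it suffices to exhibit a proper vertex-colouring of $G^*$ using at most $6m$ colours.

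The argument then proceeds by induction on the number of vertices $V^*$ of $G^*$. In the base case $V^* \leq 6m$ one simply colours each vertex distinctly. For the inductive step, assuming $V^* > 6m$, Lemma~\ref{lem:empire-graph-average-bound} supplies a vertex $v$ of degree at most $6m-1$. Deleting $v$ and its incident edges yields a graph $H$ on $V^*-1$ vertices, and Lemma~\ref{lem:proof-modified-claim} exhibits a genuine collapsed $m$-pire graph $L$, on the same vertex set as $H$, of which $H$ is a subgraph. Since $L$ has strictly fewer vertices than $G^*$, the inductive hypothesis colours $L$ in $6m$ colours; discarding the extra edges, this colouring restricts to a proper colouring of $H$. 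Finally, reinstating $v$, its at most $6m-1$ neighbours cannot exhaust a palette of $6m$ colours, so a free colour remains for $v$, completing a $6m$-colouring of $G^*$.

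The one step requiring genuine care is the replacement of $H$ by the collapsed graph $L$: deleting the vertex corresponding to an empire can destroy connectivity or force an additional empire boundary when the removed empire forms a squeezed annulus, so $H$ need not itself be a collapsed $m$-pire graph. This is precisely why the weakened Claim~\ref{clm:modified-claim}, established as Lemma~\ref{lem:proof-modified-claim} in Section~\ref{sec:proof-of-claim}, is invoked in place of the naive removal claim; since $L$ only ever has \emph{more} edges than $H$, any colouring of $L$ suffices for $H$, and with this the induction closes without incident. The theorem follows.
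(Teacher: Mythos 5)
Your induction argument is internally sound, but it proves the wrong statement: it is a re-derivation of Theorem~\ref{thm:heawood-upper-empire}, which the paper established a chapter earlier and which the proof of Theorem~\ref{thm:heawood-equality-empire} disposes of in a single sentence. Despite the loose wording ``sufficient'', this theorem --- labelled as an equality, placed in the section ``Proof of Heawood's empire conjecture in the plane'', and immediately preceded by the announcement that the bound of Theorem~\ref{thm:heawood-upper-empire} ``is actually strict'' --- asserts sharpness: $6m$ colours are also \emph{necessary}, i.e.\ for every $m \geq 2$ there exists an $m$-pire map on the sphere requiring the full $6m$ colours. You noticed yourself that the hypothesis $m \geq 2$ ``plays no role'' in your reading; that was the red flag. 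For the upper bound $m \geq 2$ is indeed vacuous, but for sharpness it is essential: when $m = 1$ the correct chromatic number is $4$ by the Four Colour Theorem (cf.\ Corollary~\ref{cor:map-upper-bound-6} and Remark~\ref{rem:map-upper-bound-5}), so the value $6m$ is attained only from $m = 2$ onward.

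What your proposal omits is the lower-bound construction, which is the substance of the entire chapter. The paper reduces the problem to exhibiting, for each $m \geq 2$, a planar $J(6m,m)$ graph --- a complete $m$-pire graph on $6m$ empires, which by Lemma~\ref{lem:complete-empire-graphs-number-colours} requires $6m$ colours --- and then transfers this to the sphere via duality. The construction decomposes three copies of $K_{2m}$ into $m$ edge-disjoint Hamiltonian paths (Theorem~\ref{thm:complete-graph-decomposition}, together with Corollaries~\ref{cor:endpoint-exactly-once} and~\ref{cor:touch-every-other-once}) and pastes pairs of paths planarly via Construction~\ref{con:empire-graph-case-2}: this settles all even $m$ (Lemma~\ref{lem:empire-planar-case-even}); the case $m = 3$ is handled by Wessel's explicit graph (Lemma~\ref{lem:empire-planar-case-3}); and odd $m \geq 5$ by a modified pasting that exposes extra path endpoints (Lemma~\ref{lem:empire-planar-case-odd}). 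Combining these examples with the already-proved upper bound yields the equality $h_{0,m} = 6m$. None of this appears in your proposal, so as a proof of the theorem the paper intends it has a genuine --- indeed total --- gap on the necessity side; your text would serve only as a (redundant) restatement of the earlier upper-bound proof.
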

  \begin{proof}
    Firstly we know from Theorem~\ref{thm:heawood-upper-empire} that you never require more than $6m$ colours thus to prove the statement it is sufficient to provide an example of a graph which requires $6m$ colours for each $m$. Secondly we recall that we can deal with $m$-pire graphs rather than maps. Thirdly we recall that any planar graphs can be embedded on the sphere so it is sufficient to find planar graphs for each $m$ which require $6m$ colours. Thus we can use Lemma~\ref{lem:complete-empire-graphs-number-colours} and say that we need only to show that there exists a planar $J(6m, m)$ for all $m \geq 2$ but this follows from Lemma~\ref{lem:empire-planar-case-even}, Lemma~\ref{lem:empire-planar-case-3} and Lemma~\ref{lem:empire-planar-case-odd}.
  \end{proof}
  
  \begin{remark}
    The observant reader will notice that there is an inconsistency with the proof provided by Wessel and our definition of an empire graph, namely that an empire graph must be connected. For the cases $m = 2, 3, 5$ the graphs produced by Wessel are indeed connected and fully satisfy the criteria. However, for all other m they are not since they are all the disjoint union of some connected graphs. This can be explained away by how one defines an empire graph; however. I would argue that a disconnected graph makes no physical sense because if you tried to convert back to an empire map you would not be able to. This is because you would have two countries for which there was no way of ``walking" from one to another, i.e. you would have to have bits missing from the world!
    
	This is not a major problem since we can make them connected by simply joining two vertices from different disjoint parts. Since we are not creating any vertices every condition will still be satisfied so long as we do not create any loops or multiple edges. However since we are joining two previously disconnected vertices this will also not be an issue.
	
	So, to describe the process explicitly, to convert one of the graphs produced by Wessel, $G$, into a valid empire graph as defined in this report you can do the following:
	\begin{enumerate}
	  \item Let $G = \sqcup_{i=1}^s G_i$ be the decomposition of G into path connected components
	  \item If $s = 1$ then we are done.
	  \item If $s > 1$ select any $G_i$ , $G_j$ with $i != j$
	  \item Pick any vertex $v \in G_i$ which is on the exterior, similarly for $w ∈ G_j$
	  \item Since $v, w$ are on the exteriors of their graphs you can then simply join them by an edge without create any crossings on the sphere
	  \item Go to Step 1 (with $s$ now one less that before)
	\end{enumerate}
  \end{remark}

  \chapter{Empire maps for higher genus surfaces} \label{chp:torus}
We now move away from the plane to higher genus surfaces e.g. the torus, and explore the colouring of empire maps on these surfaces to see if we can establish similar results to the ones we have achieved in the plane.

\section{Known results}
  First we define the the analogue of the chromatic number of a surface but for empire maps:
  \begin{definition}
    For a closed orientable surface $S$ of genus $g$ we define the \textbf{chromatic empire number}, $h_{g,m}$ as the maximum number of colours required to colour any $m$-pire map on $S$.
  \end{definition}
  
  This area started, as with everything else we have discussed, with Heawood. He provided an upper bound for higher genus surfaces which we provide without proof:
  \begin{theorem} \label{thm:genus-upper-bound}
    For an orientable surface $S$ with genus $g$ and chromatic empire number $h_{g,m}$ we have that:
    \begin{equation} \label{eqn:genus-upper-bound}
      h_{g,m} \leq \left\lfloor \frac{6m + 1 + \sqrt{(6m + 1)^2 + 24(2g - 2)}}{2} \right\rfloor
    \end{equation}
  \end{theorem}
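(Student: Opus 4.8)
The plan is to re-run the argument behind Theorem~\ref{thm:heawood-upper-normal}, but with the empire refinement already present in Lemma~\ref{lem:empire-graph-average-bound} and with the Euler characteristic kept general instead of specialised to the sphere. Throughout I assume $\chi(S)\le 0$, i.e.\ $g\ge 1$ (the sphere case being Theorem~\ref{thm:heawood-upper-empire}); write $c=h_{g,m}$ for the quantity to be bounded and $B$ for the right-hand side of \eqref{eqn:genus-upper-bound}. Since colouring $M$ is equivalent to colouring its collapsed $m$-pire graph $G^{*}$, it suffices to bound $c(G^{*})$. First I would re-prove Lemma~\ref{lem:empire-graph-average-bound} without substituting $\chi(S)=2$: the identical chain — $E^{*}\le E'$ because collapsing only deletes edges, $2E'\le 6V'-6\chi(S)$ because the dual $G'$ is simple (Lemma~\ref{lem:dual-graph-simple}, feeding Lemma~\ref{lem:bound-edges-vertices-euler}), and $V'\le mV^{*}$ — gives, after dividing by $V^{*}$ and using Lemma~\ref{lem:edge-count-vertices},
\[
  A(G^{*}) \;\le\; 6m - \frac{6\chi(S)}{V^{*}}.
\]
For $\chi(S)\le 0$ the correction term is now non-negative, so I can no longer conclude $A(G^{*})<6m$; this is exactly the slack that produces the genus-dependent bound.

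The second ingredient is an empire analogue of ``critical graph'', which I would obtain by a minimal counterexample. Suppose the inequality fails, and among all collapsed $m$-pire graphs arising from $m$-pire maps on $S$ whose chromatic number exceeds $B$, pick one, $G^{*}$, with the fewest vertices; set $c=c(G^{*})\ge B+1$. I claim every vertex of $G^{*}$ has degree at least $c-1$, mimicking Lemma~\ref{lem:critical-graph-vertex-degree}. Indeed, if some $v$ had $\deg(v)\le c-2$, removing $v$ would leave a graph $H$ which, by the surface version of Lemma~\ref{lem:proof-modified-claim}, is a subgraph of a collapsed $m$-pire graph $L$ with $V(L)=V(H)=V^{*}-1$. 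Any $(c-1)$-colouring of $H$ would extend to $G^{*}$, since $v$ sees at most $c-2$ colours; hence $c(H)\ge c$, so $c(L)\ge c\ge B+1$ while $L$ has fewer vertices than $G^{*}$, contradicting minimality. Thus the minimum degree, and therefore the average degree, is at least $c-1$.

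Combining the two ingredients gives $c-1\le A(G^{*})\le 6m-6\chi(S)/V^{*}$. Since a graph never needs more colours than it has empires, $c\le V^{*}$, and as $\chi(S)\le 0$ I may replace $V^{*}$ by $c$ in the denominator (which only weakens the bound), obtaining $c-1\le 6m-6\chi(S)/c$. Clearing denominators yields $c^{2}-(6m+1)c+6\chi(S)\le 0$, and substituting $\chi(S)=2-2g$ from \eqref{eqn:euler-char-genus} turns $6\chi(S)$ into $-6(2g-2)$, so the discriminant becomes $(6m+1)^{2}+24(2g-2)$. Because the constant term $6\chi(S)\le 0$, the two roots straddle $0$, so the positive root bounds $c$ from above; taking the floor (legitimate as $c\in\mathbb{Z}$) gives precisely \eqref{eqn:genus-upper-bound}, contradicting $c\ge B+1$.

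The step I expect to be the real obstacle is invoking Lemma~\ref{lem:proof-modified-claim} on a surface of genus $g\ge 1$: as stated and proved in Section~\ref{sec:proof-of-claim} it lives on the sphere, its proof proceeding by ``filling in'' the blank spaces left when an empire is deleted. That argument is entirely local around the removed empire — pinching annuli, collapsing excess borders, and closing all but one country of the empire — so nothing in it genuinely uses that the ambient surface is the sphere, and it should transfer to any orientable $S$. Making this transfer rigorous, in particular checking that the handle structure never forces an \emph{extra} empire into existence but at worst an extra edge between two surviving empires (exactly as in the planar case), is the one point that needs care; everything else is a routine re-run of the computations above.
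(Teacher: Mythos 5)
The paper states Theorem~\ref{thm:genus-upper-bound} explicitly \emph{without proof}, so there is no in-text argument to measure you against; the nearest internal material is the proof of Theorem~\ref{thm:heawood-upper-normal} together with the empire machinery of Chapter~\ref{chp:empire-plane}, and your proposal is exactly the natural splice of the two. Your computational skeleton is correct and is essentially Heawood's original argument: the chain $2E^{*}\le 2E'\le 6V'-6\chi(S)\le 6mV^{*}-6\chi(S)$ generalising Lemma~\ref{lem:empire-graph-average-bound}, a minimal counterexample standing in for a critical graph to force minimum degree at least $c-1$, the replacement of $V^{*}$ by $c\le V^{*}$ (valid since $\chi(S)\le 0$), and the quadratic $c^{2}-(6m+1)c+6\chi(S)\le 0$ whose roots straddle zero, giving $c\le\lfloor h_2\rfloor$. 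One small caveat: as stated the theorem includes $g=0$, and at $g=0$, $m=1$ the right-hand side of \eqref{eqn:genus-upper-bound} evaluates to $4$ --- the Four Colour Theorem, unreachable by any such counting. Your deferral to Theorem~\ref{thm:heawood-upper-empire} covers $g=0$ only for $m\ge 2$ (where the floor equals $6m$); you should exclude $g=0$, $m=1$ explicitly rather than folding it into ``the sphere case''.

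The one genuine soft spot is the step you flagged yourself, but your diagnosis of it is slightly too optimistic. The case analysis proving Lemma~\ref{lem:proof-modified-claim} (discs, annuli, squeezed and multiple annuli) tacitly assumes every country is a \emph{planar} subsurface; on a surface of genus $g\ge 1$ a country can itself carry handles (e.g.\ be a one-holed torus), a configuration the ``pinching'' argument never meets, so the proof does not transfer verbatim. Fortunately the weaker statement you actually invoke admits a simpler, genuinely surface-independent surgery: delete all countries of the chosen empire and absorb each connected component of the vacated region into one adjacent surviving country by erasing their common border edges. This creates no new empires and no new countries, and leaves every border between surviving countries intact, so the collapsed graph of the new map contains $H$ as a spanning subgraph on the same vertex set --- all your minimality argument needs. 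Better still, you can bypass map surgery entirely: run the minimal-counterexample argument over the slightly larger class of collapses of simple graphs embedded in $S$ whose vertices are partitioned into classes of size at most $m$. Deleting the $\le m$ vertices of the empire corresponding to $v$ from the embedded graph $G'$ leaves a member of this class whose collapse is \emph{exactly} $G^{*}-v$, and nothing in your counting chain (Lemma~\ref{lem:dual-graph-simple} feeding Lemma~\ref{lem:bound-edges-vertices-euler}, plus $V'\le mV^{*}$) uses connectedness or being the dual of a map --- only simplicity, the embedding, and the empire-size bound. With either repair your proof closes.
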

	
  As before Heawood conjectured that this upper bound was sharp for all $g,m$ and it is this conjecture that we will be looking at extending:
  \begin{conjecture} \label{con:empire-genus-equality}
    For an orientable surface $S$ with genus $g$ with chromatic empire number $h_{g,m}$ we have that:
    \begin{equation} \label{eqn:genus-empire-equality}
      h_{g,m} = \left\lfloor \frac{6m + 1 + \sqrt{(6m + 1)^2 + 24(2g - 2)}}{2} \right\rfloor
    \end{equation}
  \end{conjecture}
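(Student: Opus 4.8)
The plan is to settle \eqref{eqn:genus-empire-equality} in exactly the way Heawood's plane conjecture was settled in Chapter \ref{chp:empire-conjecture}: the upper bound is already supplied by Theorem \ref{thm:genus-upper-bound}, so the entire content of the conjecture reduces to a matching \emph{lower} bound. Writing
\begin{displaymath}
  N = \left\lfloor \frac{6m + 1 + \sqrt{(6m+1)^2 + 24(2g-2)}}{2} \right\rfloor,
\end{displaymath}
and recalling that the orientable genus-$g$ surface $S$ has $\chi(S) = 2 - 2g$ by \eqref{eqn:euler-char-genus}, my goal is to exhibit, for each pair $(g,m)$, a complete $m$-pire graph $J(N,m)$ that embeds on $S$. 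By Lemma \ref{lem:complete-empire-graphs-number-colours} any such graph forces exactly $N$ colours, and by the duality between empire graphs and empire maps this produces an $m$-pire map on $S$ needing $N$ colours, so $h_{g,m} \geq N$; combined with Theorem \ref{thm:genus-upper-bound} this gives the equality \eqref{eqn:genus-empire-equality}.

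The construction step should follow the template of Construction \ref{con:empire-graph-case-2} and Lemmas \ref{lem:empire-planar-case-even}--\ref{lem:empire-planar-case-odd}. There one decomposes copies of $K_{2m}$ into $m$ edge-disjoint Hamiltonian paths (Theorem \ref{thm:complete-graph-decomposition}) and wires the path-endpoints together so that every empire meets every other exactly once. On a higher-genus surface the extra handles provide additional routing room, so the same endpoint-joining idea can be attempted with certain edges passing over the handles. How much mutual adjacency can be realised is governed by the empire analogue of Lemma \ref{lem:bound-edges-vertices-euler} together with the collapsing bound $V' \leq mV^*$ used in Lemma \ref{lem:empire-graph-average-bound}; running that count with $\chi(S) = 2 - 2g$ shows that $N$ is precisely the largest number of mutually adjacent empires the Euler-characteristic budget permits. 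Thus the target is extremal, and any successful construction is automatically tight, confirming that the upper bound cannot be beaten once a single witness is produced.

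The hard part, and indeed the reason this stands as a conjecture rather than a theorem, is producing these embeddings \emph{uniformly in $g$ and $m$}. Already in the classical case $m=1$ Heawood could only realise the torus by an irregular ad hoc map (Figure \ref{fig:heawood-torus-construction}), and the full Map Colour Theorem required the current-graph and rotation-system machinery of Ringel and Youngs, split into many residue classes. For empires one must track all $m$ countries of each empire simultaneously while controlling the genus, and no single family of constructions covering every $(g,m)$ is known. I would therefore attack the conjecture the way this report treats its first open instance $(g,m)=(3,2)$ in Chapter \ref{chp:torus}: fix a surface, write down an explicit $J(N,m)$ together with a genus-$g$ rotation system realising it, verify conditions (a)--(d) of Lemma \ref{lem:complete-empire-sufficient-conditions} directly, and then try to propagate the construction to neighbouring values via current graphs organised by residue class. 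Assembling such a family for every genus and every empire size is the genuine obstacle; carrying it through in full generality would constitute a complete proof of Heawood's Empire Conjecture, and at present only isolated cases are known.
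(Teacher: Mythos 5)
Your proposal is correct in the only sense available here: the statement is an open conjecture in the paper (the text explicitly says a complete proof is not known), and you rightly reduce it to exhibiting, for each $(g,m)$, an embedded $J(N,m)$ graph to match the upper bound of Theorem~\ref{thm:genus-upper-bound}, while honestly conceding that no uniform construction over all $(g,m)$ exists. This is exactly the strategy the paper itself follows for the cases it does settle --- case-by-case witnesses such as the $J(14,2)$ embedding on the triple torus in Theorem~\ref{thm:proof-case-g3-m2}, supplemented by the propagation arguments of Lemmas~\ref{lem:simplify-h-higher-genus} and~\ref{lem:extend-results-higher-genus} --- so your assessment and approach coincide with the paper's.
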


\section{Simplifying the upper bound}
  A complete proof of Conjecture~\ref{con:empire-genus-equality} is not known but a large amount of work has been done to make inroads into the problem and here we bring together the known results and then looks towards trying to confirm the conjecture for some of the unknown cases.
  
  To help attack this problem we reproduce two Lemmas which were first given by Heawood and for which we provide a proof. The purpose of these Lemmas is to simplify the upper bound for $h_{g,m}$ under certain conditions:
  \begin{lemma} \label{lem:simplify-h-sphere}
    For $S$ a sphere with $h_{0,m}$ the chromatic empire number we have:
    \begin{displaymath}
      h_{0,m} \leq 6m
    \end{displaymath}
  \end{lemma}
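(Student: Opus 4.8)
The plan is to derive this bound directly from the general upper bound in Theorem~\ref{thm:genus-upper-bound} by specialising to the sphere, i.e.\ by setting $g = 0$. First I would substitute $g = 0$ into the right-hand side of \eqref{eqn:genus-upper-bound}. This turns the term $24(2g - 2)$ into $24(-2) = -48$, leaving
\begin{displaymath}
  h_{0,m} \leq \left\lfloor \frac{6m + 1 + \sqrt{(6m + 1)^2 - 48}}{2} \right\rfloor.
\end{displaymath}

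Next I would bound the square root from above. Since $48 > 0$ we have $(6m+1)^2 - 48 < (6m+1)^2$, and because $6m + 1 > 0$ this yields the strict inequality $\sqrt{(6m+1)^2 - 48} < 6m + 1$. Substituting this into the numerator shows that the argument of the floor function is strictly less than $\frac{(6m+1) + (6m+1)}{2} = 6m + 1$.

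Finally I would invoke the defining property of the floor function: for a real number $x$ with $x < 6m + 1$ and $6m+1 \in \mathbb{Z}$, one has $\lfloor x \rfloor \leq 6m$. Applying this to $x = \frac{6m + 1 + \sqrt{(6m+1)^2 - 48}}{2}$ gives $h_{0,m} \leq 6m$, as required.

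The argument is an essentially routine manipulation, so I do not anticipate a genuine obstacle; the only point requiring care is ensuring that the bound on the square root is \emph{strict}, since this is exactly what guarantees the floor drops to $6m$ rather than possibly reaching $6m+1$. As a consistency check, this agrees with Theorem~\ref{thm:heawood-upper-empire}, which already asserts that any $m$-pire map on the sphere can be coloured with at most $6m$ colours, so an alternative one-line proof would simply quote that theorem together with the definition of $h_{0,m}$ as a maximum.
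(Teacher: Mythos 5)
Your proposal is correct and takes essentially the same route as the paper: both specialise the bound of Theorem~\ref{thm:genus-upper-bound} to $g = 0$ and exploit the strict inequality $\sqrt{(6m+1)^2 - 48} < 6m+1$ together with integrality to pull the floor down to $6m$. The only difference is presentational --- the paper packages the identical algebra as a proof by contradiction (assuming $h_{0,m} \geq 6m+1$ and deriving $0 \leq -48$), whereas you argue directly, which is if anything slightly cleaner.
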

  \begin{proof}
    We use proof by contradiction, so assume that for some $m$ we have that $h_{0,m} \geq 6m + 1$. Then we have the following:
    \begin{displaymath}
      \begin{aligned}
        6m + 1 &\leq \left\lfloor \frac{6m + 1 + \sqrt{(6m + 1)^2 - 48)}}{2} \right\rfloor \\
        &= \left\lfloor 3m + \frac{1}{2} + \frac{\sqrt{(6m + 1)^2 - 48)}}{2} \right\rfloor \\
      \end{aligned}
    \end{displaymath}
    Then we note that the floor function only makes numbers smaller giving us:
    \begin{displaymath}
      \begin{aligned}
        6m + 1 &\leq \left\lfloor 3m + \frac{1}{2} + \frac{\sqrt{(6m + 1)^2 - 48)}}{2} \right\rfloor \\
        \Rightarrow 6m + 1 &\leq 3m + \frac{1}{2} + \frac{\sqrt{(6m + 1)^2 - 48)}}{2} \\
        \Rightarrow 3m + \frac{1}{2} &\leq \frac{\sqrt{(6m + 1)^2 - 48)}}{2} \\
        \Rightarrow 6m + 1 &\leq \sqrt{(6m + 1)^2 - 48)}
      \end{aligned}
    \end{displaymath}
    We then note that since $m \geq 1$ both sides are positive and so we square both sides to get:
    \begin{displaymath}
      \begin{aligned}
        (6m + 1)^2 &\leq (6m + 1)^2 - 48 \\
        \Rightarrow 0 &\leq -48
      \end{aligned}
    \end{displaymath}
    which is clearly a contradiction and so $h_{0,m} < 6m + 1$ or since we are dealing with integers $h_{0,m} \leq 6m$ as required.
  \end{proof}
  
  \begin{lemma} \label{lem:simplify-h-higher-genus}
    For a surface $S$ with genus $g > 0$ we have that:
    \begin{displaymath}
      h_{g,m} \leq 6m + 1
    \end{displaymath}
    for $1 \leq g \leq \frac{1}{2}(m + 2)$.
  \end{lemma}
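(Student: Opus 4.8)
The plan is to mimic exactly the argument of Lemma~\ref{lem:simplify-h-sphere}, arguing by contradiction from the general upper bound in Theorem~\ref{thm:genus-upper-bound}. Since we want to conclude $h_{g,m} \leq 6m+1$, I would assume for contradiction that $h_{g,m} \geq 6m+2$ and feed this into inequality~\eqref{eqn:genus-upper-bound}. The first few steps are purely mechanical: discard the floor function (which only decreases its argument), multiply through by $2$, and isolate the square root to obtain $6m+3 \leq \sqrt{(6m+1)^2 + 24(2g-2)}$. Because $m \geq 1$ and $g \geq 1$ both sides are non-negative and the radicand is non-negative (so the square root is real), hence squaring preserves the inequality.

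After squaring I would obtain $(6m+3)^2 \leq (6m+1)^2 + 24(2g-2)$; moving $(6m+1)^2$ to the left and using the difference of squares $(6m+3)^2 - (6m+1)^2 = 24m + 8$ reduces this to $24m + 8 \leq 24(2g-2)$, i.e. $g \geq \tfrac{m}{2} + \tfrac{7}{6}$. I would then juxtapose this with the standing hypothesis $g \leq \tfrac{1}{2}(m+2) = \tfrac{m}{2} + 1$. Since $\tfrac{m}{2} + 1 < \tfrac{m}{2} + \tfrac{7}{6}$, the two bounds on $g$ are incompatible, yielding the desired contradiction and forcing $h_{g,m} \leq 6m+1$.

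There is no genuinely hard step here --- the whole argument is a careful algebraic manipulation --- but the point that needs real care is the bookkeeping of the constants. The conclusion is exactly $6m+1$ rather than $6m$, so the contradiction must start from $h_{g,m} \geq 6m+2$; starting from $6m+1$ would produce nothing. Likewise the hypothesis bound $\tfrac{1}{2}(m+2)$ is tuned precisely so that it sits just below the threshold $\tfrac{m}{2} + \tfrac{7}{6}$ produced by the algebra: the margin is only $\tfrac{1}{6}$, so any arithmetic slip (for instance mis-evaluating the difference of squares, or dropping the factor of $24$) would wipe out the contradiction. I would therefore double-check the difference-of-squares computation and the final comparison of the two bounds on $g$ rather than worry about any topological or combinatorial input, of which there is none beyond the already-cited Theorem~\ref{thm:genus-upper-bound}.
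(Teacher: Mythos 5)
Your proof is correct and takes essentially the same route as the paper: assume $h_{g,m} \geq 6m+2$, drop the floor in Theorem~\ref{thm:genus-upper-bound}, isolate the radical to get $6m+3 \leq \sqrt{(6m+1)^2 + 24(2g-2)}$, and square. The only cosmetic difference is that the paper substitutes $2g-2 \leq m$ into the radicand \emph{before} squaring, reaching the contradiction $9 \leq 1$, whereas you square first and compare the resulting bound $g \geq \tfrac{m}{2} + \tfrac{7}{6}$ against the hypothesis $g \leq \tfrac{m}{2} + 1$; the two orderings are algebraically equivalent, and your arithmetic ($(6m+3)^2 - (6m+1)^2 = 24m+8$) checks out.
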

  \begin{proof}
     We can rearrange the the second half of the condition on $g$ to get (noting that the first half is satisfied by assumption):
    \begin{displaymath}
      m \geq 2g - 2
    \end{displaymath}
    We then use a similar contradiction argument as used in the proof of Lemma~\ref{lem:simplify-h-sphere} and assume that for some $m, g$ satisfying $m \geq 2g - 2$ we have $h_{g,m} \geq 6m + 2$ so we get:
    \begin{displaymath}
      \begin{aligned}
        6m + 2 &\leq \left\lfloor \frac{6m + 1 + \sqrt{(6m + 1)^2 + 24(2g - 2)}}{2} \right\rfloor \\
        \Rightarrow 6m + 2 &\leq \frac{6m + 1 + \sqrt{(6m + 1)^2 + 24(2g - 2)}}{2} \\
        \Rightarrow 3m + \frac{3}{2} &\leq \frac{\sqrt{(6m + 1)^2 + 24(2g - 2)}}{2} \\
        \Rightarrow 6m + 3 &\leq \sqrt{(6m + 1)^2 + 24(2g - 2)} \\
        \Rightarrow 6m + 3 &\leq \sqrt{(6m + 1)^2 + 24m} \\
        \Rightarrow (6m + 3)^2 &\leq (6m + 1)^2 + 24m \\
        \Rightarrow 36m^2 + 36m + 9 &\leq 36m^2 + 36m + 1 \\
        \Rightarrow 9 &\leq 1
      \end{aligned}
    \end{displaymath}
    which is clearly a contradiction and therefore $h_{g,m} < 6m + 2$ and again since we are working with integer values we have that $h_{g,m} \leq 6m + 1$ as required.
  \end{proof}
  
  We provide one more lemma which although it doesn't actually simplify the upper bound it does allow us to extend work to higher genus immediately. The idea is that if we already have an upper bound on $h_{g,m}$ for all values of $g,m$ and what we would like to do is find a way to provide a lower bound as well, thus reducing the unknown cases. Intuitively the idea is that if we can provide an lower bound on $h_{g,m}$ for a surface $S$ then this lower bound also applies to all surfaces $S'$ which have a higher genus than $S$. More formally:
  \begin{lemma} \label{lem:extend-results-higher-genus}
    For a fixed value of $m \geq 1$, given a surface $S$ with genus $g$ and empire chromatic number $h_{g,m}$, if $h_{g,m} \geq h$ for some $h$, then for any surface $S'$ with genus $g' \geq g$ and empire chromatic number $h_{g',m}$, we have that $h_{g',m} \geq h$.
  \end{lemma}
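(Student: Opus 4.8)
The plan is to reduce the statement to its meaning in terms of maps and then transport a single extremal map up to higher genus. By definition $h_{g,m}$ is the maximum, over all $m$-pire maps on $S$, of the number of colours needed; so the hypothesis $h_{g,m} \geq h$ says precisely that there \emph{exists} an $m$-pire map $M = (M, A, g)$ on $S$ which requires at least $h$ colours to colour properly. To conclude $h_{g',m} \geq h$ it suffices to exhibit a single $m$-pire map $M'$ on $S'$ that also requires at least $h$ colours, since $h_{g',m}$ is again a maximum over all such maps. So the whole argument comes down to producing $M'$ from $M$.

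The construction I would use is to attach handles to $S$ inside a single country of $M$. Since $g' \geq g$, the classification of surfaces together with the additivity of the Euler characteristic (Equation~\eqref{eqn:euler-char-genus}) lets us write $S'$ as the connected sum $S \# T_1 \# \cdots \# T_{g'-g}$ of $S$ with $g'-g$ tori. I would realise each of these connected sums concretely: fix one country $c$ of $M$, and for each torus remove two small open discs lying entirely in the interior of $c$ and glue in a handle (a cylinder) along the two new boundary circles. Because both feet of every handle lie in the interior of the same country $c$, no vertex or edge of $M$ is touched, so the graph underlying $M$ sits unchanged inside $S'$ and defines a map $M'$ on $S'$.

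It then remains to check that $M'$ is a genuine $m$-pire map with the same chromatic requirement. The countries of $M'$ are in bijection with those of $M$: the handle region is absorbed into $c$, which becomes a single connected (but no longer disc-shaped) region, while every other country is unchanged. No handle creates or destroys a shared edge, so two countries are neighbouring in $M'$ exactly when they were in $M$. Inheriting the empire function $g$ and the set $A$ from $M$, each empire still contains at most $m$ countries, so $M'$ is an $m$-pire map whose adjacency-and-empire structure is identical to that of $M$. Hence a colouring of $M'$ is the same data as a colouring of $M$, and $M'$ requires at least $h$ colours; therefore $h_{g',m} \geq h$.

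The step to watch is the placement of the handles: both attaching discs of each handle must be chosen in the \emph{same} country, for otherwise the handle would join two distinct countries and alter the adjacency structure. (That $M$ has at least one country is immediate, as $M$ is a non-trivial map.) A secondary, purely cosmetic, point is that $M'$ need not be a combinatorial surface, since $c$ is no longer a topological disc; but this is irrelevant, as the number of colours depends only on the neighbouring relation between countries, and if one insists on a combinatorial surface one may triangulate the handle region by edges all incident to $c$, which creates no adjacency between empires that were not already adjacent.
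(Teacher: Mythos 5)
Your proof is correct, but it reaches the conclusion by a genuinely different route from the paper. The paper argues through duality: it takes the extremal $m$-pire map on $S$, passes to its dual empire graph $G'$, invokes the general fact (stated earlier in the report without proof) that any graph embeddable on a surface embeds on every surface of higher genus, and then converts the embedded dual graph back into an empire map on $S'$ requiring the same $h$ colours. You instead transport the map itself, realising $S'$ as $S$ with $g'-g$ handles whose feet both lie inside one fixed country, so that the embedded graph --- and with it the neighbouring relation and the empire function --- is literally unchanged. In effect you \emph{prove} the genus-monotonicity of embeddability that the paper merely cites, which makes your argument more self-contained; it also avoids the dual-graph round trip, where the paper leaves implicit the check that the reconstituted map on $S'$ is again an $m$-pire map with the same colour requirement (this rests on the map--dual-graph correspondence of Remark~\ref{rem:map-dual-graph-parity}). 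Your version makes the ``requires at least $h$ colours'' claim immediate, since the adjacency-and-empire data are identical by construction, and your closing caveats (both attaching discs in the \emph{same} country; the country $c$ ceasing to be a disc, which is harmless because colouring depends only on adjacency) flag exactly the right subtleties. The one point you should add is that each handle must be glued in an orientation-compatible fashion, so that the result is the orientable surface of genus $g'$ rather than a non-orientable one; this is guaranteed by the connected-sum description $S \# T_1 \# \cdots \# T_{g'-g}$ you invoke, but it deserves a word since the report restricts attention to orientable surfaces throughout.
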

  \begin{proof}
    Since $h_{g,m} \geq h$ we know that there exists an empire map on $S$ which requires $h$ colours to colour properly. This empire map must have a dual graph $G'$ which can also be embedded on $S$. Then since any graph can be embedded on surfaces of higher genus we can say that $G'$ can also be embedded on $S'$. We can then convert this dual graph back to an empire map on $S'$ which also requires $h$ colours to colour properly and thus $h_{g',m} \geq h$.
  \end{proof}
  
\section{Summary of known results}
  What we will now do is to cover all the known results made towards Conjecture~\ref{con:empire-genus-equality}, most of these results we have already mentioned and in some cases proved but we provide them again here in the new notation and for completeness:
  \begin{corollary} \label{cor:summary-known-results}
    For a surface $S$ of genus $g$ with empire chromatic number $h_{g,m}$ we have that:
    \begin{displaymath}
      h_{g,m} = \left\lfloor \frac{6m + 1 + \sqrt{(6m + 1)^2 + 24(2g - 2)}}{2} \right\rfloor
   \end{displaymath}
   for the following cases:
   \begin{enumerate} [(a)]
     \item $g = 0$, $m \geq 1$
     \item $g = 1$, $m \geq 1$
     \item $g = 2$, $m \geq 1$
    \end{enumerate}
  \end{corollary}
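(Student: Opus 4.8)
The plan is to prove each equality by trapping $h_{g,m}$ between a matching upper and lower bound. The upper bound is free: Theorem~\ref{thm:genus-upper-bound} already gives $h_{g,m} \leq \lfloor \ldots \rfloor$ for every $g$ and $m$, so in each subcase it remains only to exhibit a lower bound equal to the stated floor. I would organise the work by the two rows $m = 1$ and $m \geq 2$, since the simplification of the floor and the origin of the lower bounds differ between them.

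For the row $m = 1$ (ordinary maps) the floor reduces by direct arithmetic: at $g = 0$ it equals $4$, at $g = 1$ it equals $7$, and at $g = 2$ it equals $8$. The matching lower bounds are the classical colouring theorems, namely the Four Colour Theorem for the sphere and the Map Colour Theorem (equation~\eqref{eqn:heawood-conjecture-normal}, Section~\ref{sec:heawood-conjecture-normal}) for $g = 1, 2$, each of which guarantees a map attaining exactly that many colours. Hence equality holds throughout the $m = 1$ row.

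For $m \geq 2$ I would first collapse the floor using the simplification lemmas. On the sphere Lemma~\ref{lem:simplify-h-sphere} shows the bound is $6m$, and one checks the floor equals $6m$ exactly once $m \geq 2$ since $(6m+1)^2 - 48 \geq (6m-1)^2 \iff 24m \geq 48$; the matching construction is Theorem~\ref{thm:heawood-equality-empire}, which furnishes an $m$-pire map needing $6m$ colours. For $g = 1, 2$ the hypothesis $1 \leq g \leq \tfrac{1}{2}(m+2)$ of Lemma~\ref{lem:simplify-h-higher-genus} holds (trivially for $g=1$, and for $g=2$ precisely because $m \geq 2$), so the bound collapses to $6m+1$; a short discriminant check, comparing $\sqrt{(6m+1)^2+48}$ against $6m+1$ and $6m+3$, confirms the floor is exactly $6m+1$ in both genera. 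The lower bound here must come from genuine toroidal and genus-two empire constructions of $J(6m+1,m)$ graphs forcing $6m+1$ colours, cited from the literature, since the extension Lemma~\ref{lem:extend-results-higher-genus} only propagates the sphere value $6m$ upward and therefore falls one colour short.

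The routine part is the floor arithmetic, which the two simplification lemmas essentially package. The genuine obstacle I expect is securing the lower bounds for $g = 1, 2$ at $m \geq 2$: one needs explicit $m$-pire maps embeddable on the torus and double torus that force $6m+1$ colours, and these do not follow from anything proved earlier in the report but must be imported as established results, which is exactly what makes this a \emph{summary} corollary rather than a self-contained theorem.
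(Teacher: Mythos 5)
Your handling of case (a) and of the entire $m=1$ row matches the paper's proof exactly: the upper bound comes from Theorem~\ref{thm:genus-upper-bound} via Lemma~\ref{lem:simplify-h-sphere}, the lower bound from Theorem~\ref{thm:heawood-equality-empire} for $m \geq 2$, and the Four Colour Theorem / Map Colour Theorem cover $m=1$; likewise for $g=1$ the paper, as you propose, imports the toroidal lower bound from Wessel \cite{wessel-torus} without proof, so cases (a) and (b) are fine (your explicit discriminant checks that the floor equals $6m$ on the sphere for $m\geq 2$ and $6m+1$ for $g=1,2$ are if anything more careful than the paper, which leaves them inside Lemmas~\ref{lem:simplify-h-sphere} and~\ref{lem:simplify-h-higher-genus}). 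The genuine gap is in case (c). You assert that Lemma~\ref{lem:extend-results-higher-genus} ``only propagates the sphere value $6m$ upward and therefore falls one colour short,'' and conclude that explicit genus-two constructions of $J(6m+1,m)$ graphs must be imported from the literature. That misreads the lemma: it propagates \emph{any} lower bound from \emph{any} genus to all higher genera, not just from the sphere. Applying it to the torus value $h_{1,m} = 6m+1$, which you have already secured in case (b), immediately gives $h_{2,m} \geq 6m+1$; combined with $h_{2,m} \leq 6m+1$ from Lemma~\ref{lem:simplify-h-higher-genus} (whose hypothesis $g \leq \tfrac{1}{2}(m+2)$ holds for $g=2$ precisely when $m \geq 2$) and the Map Colour Theorem for $m=1$, this settles the double torus with no new construction. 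This torus-to-double-torus propagation is exactly the paper's argument for case (c).

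The point is not merely one of economy. The report explicitly frames the double torus as a \emph{derived} case -- it states the conjecture ``is not known completely for any other surfaces'' beyond those reachable by this propagation -- so there is no stock of established genus-two $m$-pire constructions for you to cite, and your proposal as written leaves case (c) for $m \geq 2$ resting on a reference that does not exist in the paper's bibliography or in the surrounding literature as the paper presents it. The repair is simple: delete the claimed need for genus-two constructions and replace it with the application of Lemma~\ref{lem:extend-results-higher-genus} to the $g=1$ result; everything else in your outline then goes through.
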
 
  \begin{proof}
    We discuss each case separately:
    \begin{enumerate} [(a)]
      \item Case $g=0$ (sphere), $m \geq 1$. We consider two cases, $m=1$ and $m > 1$. Firstly take $m=1$, this then becomes the Four Colour Problem which has been solved and so $h_{0,1}$ should equal 4 and indeed simply calculation shows that this is the case. \newline
     So consider $m > 1$ then we know from Lemma~\ref{lem:simplify-h-sphere} that $h_{0,m} \leq 6m$ but we also know from Theorem~\ref{thm:planar-empire-maps} that there exists a map which requires $6m$ colours and thus $h_{0,m} \geq 6m$ and so we get $h_{0,m} = 6m$ as required.
      \item Case $g=1$ (torus), $m \geq 1$. We do not provide a proof for the torus. It was first proved in a piecemeal fashion by Jackson and Ringel however the proof was long and indirect. In 1997 Walter Wessel \cite{wessel-torus} provided a shorter, elementary, uniform and constructive proof (again which we do not provide). \newline
     Note that by Lemma~\ref{lem:simplify-h-higher-genus} $h_{g,m} \geq 6m + 1$ for $m \geq 2g-2$ which in this case is always true so we have that for the case $g=1$ that $h_{1,m} = 6m + 1$ for all $m \geq 1$.
      \item Case $g=2$ (double torus), $m \geq 1$. We have shown (or stated!) in part (b) that for $g=1$ we have $h_{1,m} = 6m + 1$ so by Lemma~\ref{lem:extend-results-higher-genus} that $h_{2,m} \geq 6m + 1$. But by Lemma~\ref{lem:simplify-h-higher-genus} we have that $h_{2,m} \leq 6m + 1$ for $m \geq 2g - 2 = 2$ so this proves the result for all cases except the case $m=1$. \newline
     For the case $g=2, m=1$ we note that this involves no empires and is therefore the Map Colour Theorem which as we mentioned earlier (Section~\ref{sec:heawood-conjecture-normal}) has been proven and so we are done.
    \end{enumerate}
  \end{proof}
   
  So the it has been shown that Conjecture~\ref{con:empire-genus-equality} holds for the sphere, torus and double torus and in fact it is not known completely for any other surfaces. However there were two arguments that were used in the proof of Corollary~\ref{cor:summary-known-results} for the double torus which hold for all higher genus surfaces and we will state these separately as these are important results in there own right. The first is simply the Map Colour Theorem which we can restate in terms of our new notation:
  \begin{theorem} \label{thm:map-colour-new-notation}
    For all (orientable) surfaces $S$ with genus $g$ and empire chromatic number $h_{g,m}$ we have:
    \begin{displaymath}
      h_{g,1} = \left\lfloor \frac{6m + 1 + \sqrt{(6m + 1)^2 + 24(2g - 2)}}{2} \right\rfloor
    \end{displaymath}
  \end{theorem}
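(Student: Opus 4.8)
The plan is to observe that the case $m=1$ reduces entirely to the classical (non-empire) map colouring problem, so that Theorem~\ref{thm:map-colour-new-notation} is nothing more than the Map Colour Theorem of Section~\ref{sec:heawood-conjecture-normal} rewritten in the notation $h_{g,m}$, with the free $m$ on the right-hand side read as $m=1$ to match the subscript on the left.

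First I would unwind the definitions. By Definition~\ref{def:m-pire-map} a $1$-pire map is an empire map in which each empire contains at most one country; since the associated function is surjective, every empire then contains exactly one country, so the empire structure is trivial and a $1$-pire map is precisely a standard map in the sense of Definition~\ref{def:map}. Consequently the chromatic empire number $h_{g,1}$ coincides with the ordinary chromatic number $c(S)$ of the surface.

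Second I would translate the upper-bound formula. For an orientable surface of genus $g$ there are no projective-plane summands, so Equation~\eqref{eqn:euler-char-genus} gives $\chi(S) = 2 - 2g$, whence $24(2g-2) = -24\chi(S)$. Substituting $m=1$ into the right-hand side of the statement yields
\[
\left\lfloor \frac{7 + \sqrt{49 + 24(2g-2)}}{2} \right\rfloor = \left\lfloor \frac{7 + \sqrt{49 - 24\chi(S)}}{2} \right\rfloor,
\]
which is exactly the right-hand side of the Map Colour Theorem equality~\eqref{eqn:heawood-conjecture-normal}. Combining this with the first step reduces the theorem to showing $c(S)$ equals this floor expression.

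Third I would invoke the known sharpness results, splitting on the sign of $\chi(S)$. For $g \geq 1$ we have $\chi(S) = 2 - 2g \leq 0$, so the Map Colour Theorem of Ringel and Youngs (Section~\ref{sec:heawood-conjecture-normal}) supplies the required equality directly. The one case not covered by that theorem is the sphere $g = 0$, where $\chi(S) = 2$ and the formula evaluates to $4$; here one instead appeals to the Four Colour Theorem of Appel and Haken. The main obstacle, insofar as there is one, is therefore purely a matter of bookkeeping rather than new mathematics: one must read the surviving $m$ as $1$, and one must treat the genus~$0$ case separately since it falls outside the hypothesis $\chi(S) \leq 0$ under which Heawood's inequality was established. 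Both of the deep inputs are quoted as already proved, so no further argument is needed.
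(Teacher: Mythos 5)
Your proposal is correct and matches the paper's treatment exactly: the paper offers no separate proof, asserting only that this theorem is the Map Colour Theorem restated in the notation $h_{g,m}$, and your write-up simply makes that restatement explicit (identifying $1$-pire maps with standard maps, substituting $\chi(S) = 2-2g$, and citing Ringel--Youngs). Your two points of care --- reading the stray $m$ on the right-hand side as $m=1$, and handling $g=0$ via the Four Colour Theorem since Ringel--Youngs requires $\chi(S) \leq 0$ --- are both sound and indeed implicit in the paper's own summary in Corollary~\ref{cor:summary-known-results}.
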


  The second important result is a corollary of the proof of Corollary~\ref{cor:summary-known-results} for the torus:
  \begin{corollary} \label{cor:equality-for-high-m}
    For all (orientable) surfaces $S$ with genus $g > 0$ and empire chromatic number $h_{g,m}$ we have:
    \begin{displaymath}
      h_{g,m} = 6m + 1
    \end{displaymath}
    for all $m \geq 2g - 2$
  \end{corollary}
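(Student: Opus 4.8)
The plan is to establish the claimed equality by squeezing $h_{g,m}$ between matching upper and lower bounds, both of which are already available from earlier results. Since the statement restricts attention to $g > 0$ and $m \geq 2g - 2$, I would first observe that these are precisely the hypotheses under which Lemma~\ref{lem:simplify-h-higher-genus} applies (recall that $m \geq 2g - 2$ is equivalent to $1 \leq g \leq \tfrac{1}{2}(m+2)$). That lemma immediately delivers the upper bound $h_{g,m} \leq 6m + 1$, so no further work is needed on that side.

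For the lower bound, I would use the known torus result, namely $h_{1,m} = 6m + 1$ for all $m \geq 1$, which was recorded in Corollary~\ref{cor:summary-known-results}(b). In particular $h_{1,m} \geq 6m + 1$. Since any surface $S$ of genus $g \geq 1$ has genus at least that of the torus, I would then invoke Lemma~\ref{lem:extend-results-higher-genus} with the torus playing the role of the lower-genus surface and $S$ the higher-genus one, concluding that $h_{g,m} \geq 6m + 1$. Note that this lower bound in fact holds for all $m \geq 1$, with no restriction on $m$ relative to $g$.

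Combining the two bounds gives $h_{g,m} = 6m + 1$ for every $g > 0$ and every $m \geq 2g - 2$, as required. I do not expect any genuine obstacle here: the entire content is packaging the existing torus equality together with the genus-monotonicity of the lower bound (Lemma~\ref{lem:extend-results-higher-genus}) and the upper bound simplification (Lemma~\ref{lem:simplify-h-higher-genus}). The only point requiring a moment's care is checking that the hypotheses of the two lemmas are simultaneously met: the lower bound is unconditional in $m$, whereas the upper bound forces $m \geq 2g - 2$, and it is exactly this latter constraint that appears in the statement of the corollary. This also explains why the equality is asserted only in that range, rather than for all $m$.
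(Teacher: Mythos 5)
Your proof is correct and is essentially the paper's own argument: the paper likewise combines the upper bound of Lemma~\ref{lem:simplify-h-higher-genus} (whose hypothesis $1 \leq g \leq \tfrac{1}{2}(m+2)$ is exactly $m \geq 2g-2$) with the torus equality from Corollary~\ref{cor:summary-known-results} propagated upward in genus by Lemma~\ref{lem:extend-results-higher-genus}. Your write-up simply makes explicit the details that the paper leaves as ``an immediate consequence.''
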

  \begin{proof}
    This follows as an immediate consequence of Corollary~\ref{cor:summary-known-results}, Lemma~\ref{lem:extend-results-higher-genus} and Lemma~\ref{lem:simplify-h-higher-genus}.
  \end{proof}

\section{The triple-torus}
  So we now look to extend upon the known results and the most obvious place to start is with the case $g=3$. However before we can start working with a genus 3 surface we need a way of representing it. We use an approach similar to that used in Section~\ref{sec:classification-of-surfaces} and more specifically we will use the approach we used to represent the torus as in Figure~\ref{fig:making-torus-quotient}. To decide how we can find a similar representation for a triple-torus we use an approach outlined in Gay \cite[Chapter 10]{gay} which allows you to describe polygons as symbols. The first step is to create a way of converting between a series of symbols and a polygon, so we shall start by converting from the image we had of a torus into symbols:
  \begin{enumerate} [(i)]
    \item Start by taking an polygon for which each side has a direction and a label (note each label may appear at most twice)
    \item Pick any vertex
    \item Starting at this vertex go clockwise round the polygon writing down the label of each side in order with the caveat that if the direction of the edge is not going clockwise around the polygon you write it down as the inverse of the label
    \item Stop when you arrive back at your starting vertex
  \end{enumerate}
  
  \begin{figure}[htb]
	\begin{center}
	  \includegraphics[height=0.3\textwidth]{./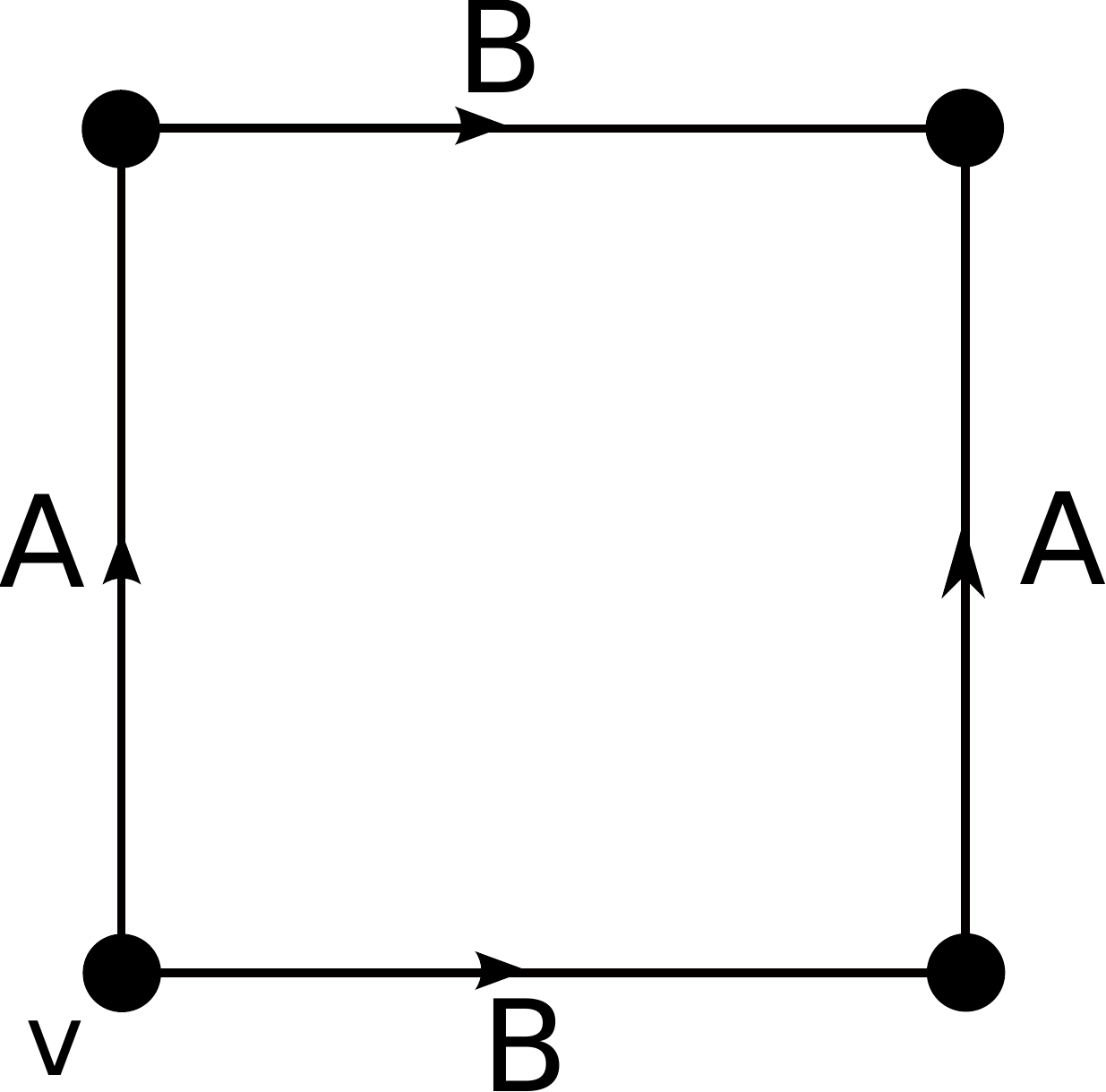}
    \end{center}
    \caption{The torus as the quotient of the square.}
    \label{fig:torus-quotient}
  \end{figure}
  
  \begin{example}
    So we take the representation of the torus as shown in Figure~\ref{fig:torus-quotient} and start at the vertex labelled $v$ and we go clockwise and therefore obtain the series of symbols: $ABA^{-1}B^{-1}$ \newline
    This is then a ``surface symbol" for the torus, it is not the only possible surface symbol for the torus but it does only represent the torus, for a rigorous proof of this see \cite[Chapter 10]{gay}. To convert back from a surface symbol we simply reverse the process as you would expect. \newline
  \end{example}
  
  So now we want to find a representation of the triple-torus, and again this is discussed in Gay \cite[Chapter 10]{gay} which states that for a double torus we simply combine two surface symbols for the torus i.e. $ABA^{-1}B^{-1}CDC^{-1}D^{-1}$ and for the triple torus we get: $ABA^{-1}B^{-1}CDC^{-1}D^{-1}EFE^{-1}F^{-1}$. For convenience we can also manipulate this to obtain equivalent surface symbols and there are many rules given in Gay for how to manipulate surface symbols but we are interested only in the following two rules: (Greek letters can represent a (possibly empty) sequence of symbols, standard letters represent a single symbol)
    \begin{rules} \label{rul:right-to-left}
      \begin{displaymath}
        \alpha X \beta \gamma X^{-1} \delta \sim \alpha X \gamma \beta X^{-1} \delta
      \end{displaymath}
      ``A sequence of symbols to the right of an $X$ can be moved to the left of an $X^{-1}$.
    \end{rules}
    \begin{rules} \label{rul:left-to-right}
      \begin{displaymath}
        \alpha \beta X \gamma X^{-1} \delta \sim \alpha X \gamma X^{-1} \beta \delta
      \end{displaymath}
      ``A sequence of symbols to the left of an $X$ can be moved to the right of an $X^{-1}$.
    \end{rules}
    
    We can then use these two rules to change our surface symbol for a triple-torus as follows:
    \begin{displaymath}
      \begin{aligned}
        ABA^{-1}B^{-1}CDC^{-1}D^{-1}EFE^{-1}F^{-1} &\sim ABA^{-1}CDC^{-1}B^{-1}D^{-1}EFE^{-1}F^{-1} \\
        &\sim ABCDC^{-1}A^{-1}B^{-1}D^{-1}EFE^{-1}F^{-1} \\
        &\sim ABCDA^{-1}B^{-1}C^{-1}D^{-1}EFE^{-1}F^{-1} \\
        &\sim ABCDEFE^{-1}A^{-1}B^{-1}C^{-1}D^{-1}F^{-1} \\
        &\sim ABCDEFA^{-1}B^{-1}C^{-1}D^{-1}E^{-1}F^{-1}
      \end{aligned}
    \end{displaymath}
    
    So we get the following two representations:
    \begin{displaymath}
      ABA^{-1}B^{-1}CDC^{-1}D^{-1}EFE^{-1}F^{-1} \sim ABCDEFA^{-1}B^{-1}C^{-1}D^{-1}E^{-1}F^{-1}
    \end{displaymath}
    
    If we then convert these back to polygons we get the two representations as shown in Figure~\ref{fig:triple-torus-quotient}. As it turns out we will be using the right-hand representation but both are equally valid as are any many other representations which we could arrive at, but these have the virture of simplicity and symmetry.
  
  \begin{figure}[htb]
	\begin{center}
	  \includegraphics[height=0.25\textheight]{./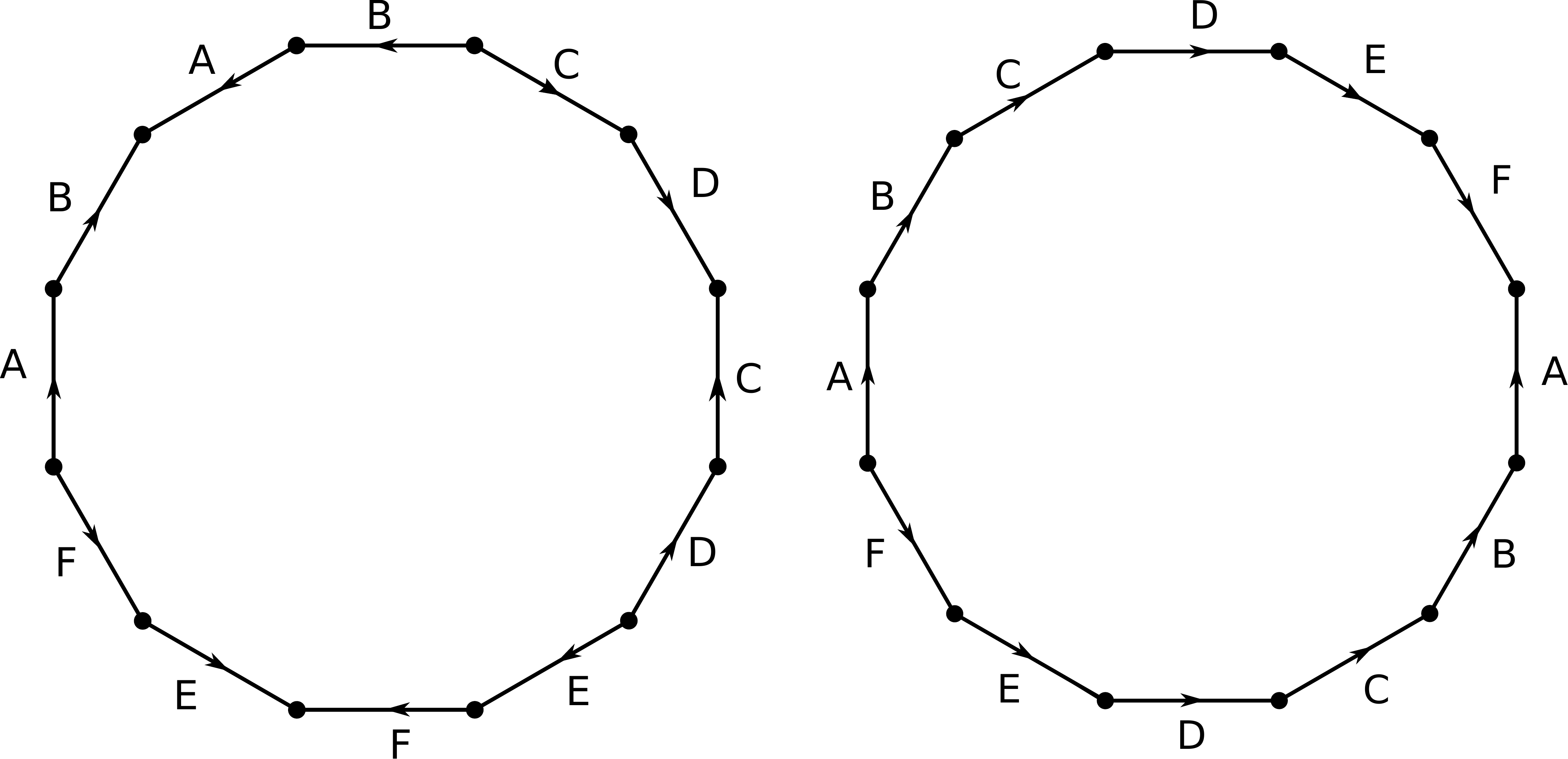}
    \end{center}
    \caption{The triple-torus as the quotient of a polygon.}
    \label{fig:triple-torus-quotient}
  \end{figure}
  
\section{Extending known results} \label{sec:new-proof}
  Now we can deal with the triple torus and have a representation which we can draw graphs on we can began to tackle the unknown cases for a genus 3 surface. Most of the work has been done; Theorem~\ref{thm:map-colour-new-notation} states that for $m=1$ we have equality. Also, Corollay~\ref{cor:equality-for-high-m} says that for $m \geq 2g - 2 = 4$ we have equality. Thus we have only two cases to prove: $m=2$ and $m=3$. We shall be considering the case $m=3$. In this case we get the following:
  \begin{displaymath}
    \begin{aligned}
      h_{3,2} &= \left\lfloor \frac{6 \cdot 2 + 1 + \sqrt{(6 \cdot 2 + 1)^2 + 24(2 \cdot 3 - 2)}}{2} \right\rfloor \\
              &= 14
    \end{aligned}
  \end{displaymath}
  
  \begin{remark}
    We know from Corollary~\ref{cor:equality-for-high-m} that on the torus, with $m = 2$, we have that $h_{1,2} = 13$. Then if we use Lemma~\ref{lem:extend-results-higher-genus} we can see that $h_{3,2} \geq 13$, so the only possibilities are $h_{3,2} =$ 13 or 14.
  \end{remark}

  This then leads us to the following theorem:  
  \begin{theorem} \label{thm:proof-case-g3-m2}
    For a surface $S$ with genus 3 we have that the chromatic 2-pire number is $h_{3,2} = 14$
  \end{theorem}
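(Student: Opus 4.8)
The strategy is to establish the two inequalities $h_{3,2} \le 14$ and $h_{3,2} \ge 14$ separately. The upper bound is immediate: substituting $g = 3$ and $m = 2$ into \eqref{eqn:genus-upper-bound} of Theorem~\ref{thm:genus-upper-bound} gives $h_{3,2} \le \lfloor \frac{13 + \sqrt{265}}{2} \rfloor = 14$, which is exactly the computation carried out just above the statement. Hence everything reduces to producing a single $2$-pire map on the triple torus that genuinely requires $14$ colours. For the lower bound I would pass to the dual, exactly as in the planar chapter: by the empire-map/empire-graph duality developed in Chapter~\ref{chp:empire-plane} together with Lemma~\ref{lem:complete-empire-graphs-number-colours}, it suffices to exhibit a $J(14,2)$ graph that embeds on the genus-$3$ surface, since such a graph forces $14$ colours and converting it back to an empire map on $S$ yields a $2$-pire map needing $14$ colours, whence $h_{3,2} \ge 14$.

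To confirm that a concrete drawing really is a $J(14,2)$ graph I would verify the four sufficient conditions of Lemma~\ref{lem:complete-empire-sufficient-conditions}: simplicity, a partition into exactly $14$ empires, at most $2$ vertices per empire, and --- the substantive one --- that every one of the $\binom{14}{2} = 91$ pairs of empires contains an adjacent pair of vertices. The construction itself would be drawn on the polygon (surface-symbol) model of the triple torus developed in the previous section, using the representation $ABCDEFA^{-1}B^{-1}C^{-1}D^{-1}E^{-1}F^{-1}$ from Figure~\ref{fig:triple-torus-quotient}, in which edges of the graph are permitted to run across the identified sides of the polygon. One places $28$ vertices grouped into $14$ empires of two countries each and joins them, without crossings and respecting the boundary identifications, so that the collapsed adjacency pattern realises all $91$ empire--empire contacts.

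Conditions (a)--(c) are then read off by inspection, and condition (d) is checked empire by empire in the bookkeeping style of Lemma~\ref{lem:empire-planar-case-3}: for each empire one lists the labels its two vertices touch and confirms that each of the other $13$ labels occurs at least once. The main obstacle is the construction together with the verification of (d), and a degree count explains why it is delicate. With $V = 28$ and $\chi(S) = 2 - 2g = -4$ by \eqref{eqn:euler-char-genus}, the Euler relation gives $C = E - 32$; since the graph is simple, Lemma~\ref{lem:simple-graph-three-sides} forces $3C \le 2E$, i.e. $E \le 96$, while realising all $91$ adjacencies requires $E \ge 91$ (each edge can introduce at most one new empire pair). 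Thus the embedding must live in the narrow window $91 \le E \le 96$: it has to be almost a triangulation in which almost every edge is spent introducing a brand-new pair of neighbouring empires, with hardly any repeated or wasted contacts. Producing such an efficient drawing that simultaneously respects the side-identifications of the polygon is the crux of the argument; once the figure is in hand, checking condition (d) is tedious but entirely routine.
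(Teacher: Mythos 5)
Your overall strategy is exactly the one the paper uses: upper bound from Theorem~\ref{thm:genus-upper-bound}, lower bound by exhibiting a $J(14,2)$ graph embedded on the triple torus (drawn on the polygon model with identifications), verified against the four conditions of Lemma~\ref{lem:complete-empire-sufficient-conditions}, with Lemma~\ref{lem:complete-empire-graphs-number-colours} converting the graph into a $2$-pire map needing $14$ colours. Your feasibility analysis is also sound and in fact mirrors the ``slack'' discussion in Appendix~\ref{app:tools}: with $\chi(S) = -4$ the embedding is forced to be nearly a triangulation, so nearly every edge must realise a brand-new empire pair.

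However, there is a genuine gap, and you have identified it yourself: you never actually produce the graph. The entire mathematical content of the lower bound $h_{3,2} \geq 14$ is the explicit construction, which the paper supplies as Figure~\ref{fig:proof-g3-m2} together with a complete adjacency table listing, for each of the $14$ empires, the neighbours of its one or two vertices, so that all $91$ pairs can be checked to occur. Your counting argument shows only that such an embedding is not ruled out by Euler-characteristic considerations ($91 \leq E \leq 96$ in your normalisation); as the appendix stresses, non-negative slack never guarantees that the embedding exists, so the window being nonempty proves nothing. Until the figure is exhibited and condition (d) is verified pair by pair, the statement $h_{3,2} = 14$ remains unproven --- the proposal is a correct plan, not a proof. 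One small further point: you assume $28$ vertices with exactly two per empire, but the paper's graph uses $27$ (one empire consists of a single vertex labelled $13$), which is permitted since Definition~\ref{def:m-pire-graph} only bounds empires by \emph{at most} $m$ vertices; insisting on the full $28$ would tighten your edge window unnecessarily and, per the slack analysis, the successful construction in fact exploits the dropped vertex.
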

  \begin{proof}
    To prove this we simply need to provide a graph which requires 14 colours and indeed we can do this by producing a $J(14,2)$ graph which we can embed on a triple torus and I claim the graph given in Figure~\ref{fig:proof-g3-m2} is such a graph. An introduction to methods used when looking for such graphs is given in Appendix~\ref{app:tools}.
	
	\begin{figure}[htb]
	  \begin{center}
	  	\includegraphics[scale=0.6]{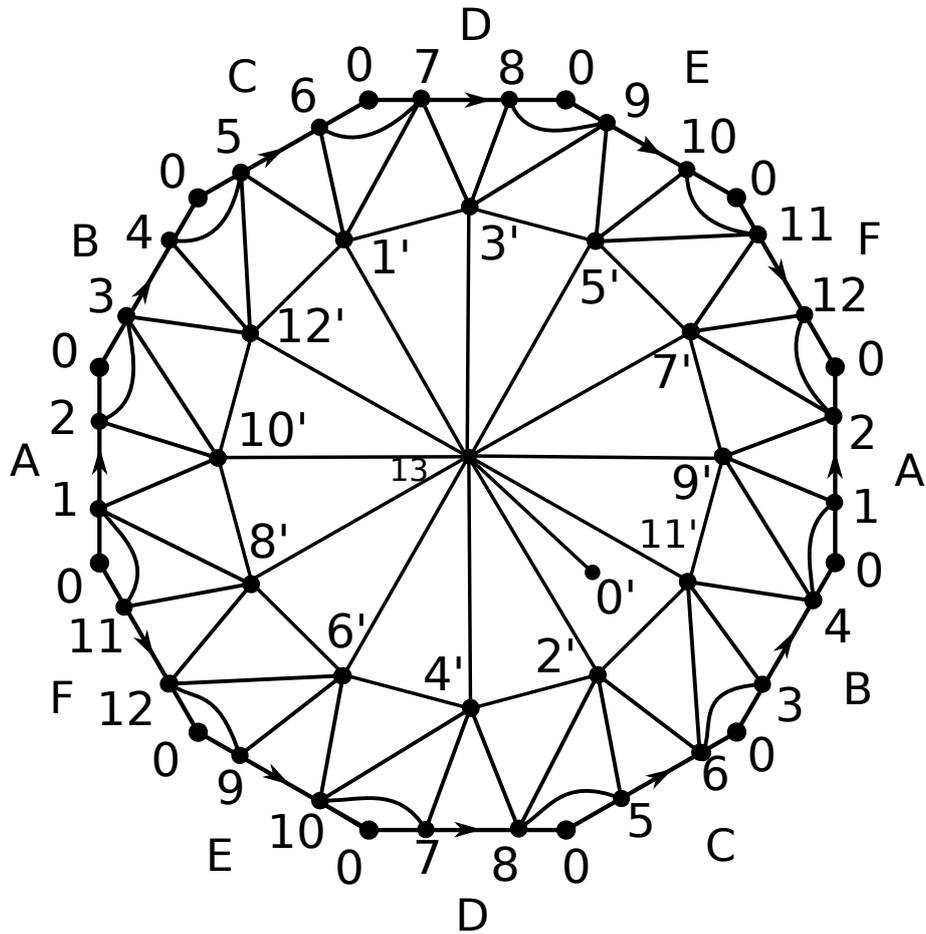}
	  \end{center}
	  \caption{Proof of Theorem~\ref{thm:proof-case-g3-m2},}
	  \label{fig:proof-g3-m2}
	\end{figure}
	
	\begin{remark}
	  It is important to realise that the graph is still embedded on the quotient of a polygon, as shown in Figure~\ref{fig:triple-torus-quotient}, so the sides labelled $A$ are glued together, similarly for $B, C$ etc. This means that all 12 vertices labelled 0 actually end up becoming a single vertex on the triple-torus, similarly the two vertices labelled 1 become the same vertex on the triple torus etc.
	\end{remark}

    We need to check that the graph provided is indeed a $J(14, 2)$ graph. To do this we check the four conditions given in Lemma~\ref{lem:complete-empire-sufficient-conditions} which were:
	\begin{enumerate} [(a)]
      \item The graph is simple
      \item The graph can be partitioned into exactly 14 vertex empires
      \item Each vertex empire contains at most 2 vertices
      \item For any two empires there exist two vertices, one from each, which are adjacent
	\end{enumerate}
    So we check each condition:
    \begin{enumerate} [(a)]
      \item Inspection shows that there are no loops or multiple edges
      \item If we let $0, 0'$ be a vertex empire, i.e. $0$ and $0'$ represent different countries from the same empire, and similarly $1, 1'$ and $2, 2'$ etc. we can see that there are exactly 14 vertex empires from 0 to 13
      \item Again inspection shows that there are exactly 2 vertices in each empire except the last empire where $13'$ isn’t present so that empire contains only one vertex, labelled 13
	  \item To show this we provide an adjacency table for all the vertices in the graph. It can be clearly seen that one of the vertices in each empire is adjacent to a vertex from every other empire: \newline
      \begin{tabular} {l|l}
        Vertex & Adjacent vertices \\ \hline
        $0$   & $1,2,3,4,5,6,7,8,9,10,11,12$   \\
        $0'$  & $13'$                          \\ \hline
        $1$   & $0, 2, 4, 8', 9', 10', 11$     \\
        $1'$  & $3', 5, 6, 7, 12', 13$         \\ \hline
        $2$   & $0, 1, 3, 7', 9', 10', 12$     \\
        $2'$  & $4', 5, 6, 8, 11', 13$         \\ \hline
        $3$   & $0, 2, 4, 6, 10', 11', 12'$    \\
        $3'$  & $1', 5', 7, 8, 9, 13$          \\ \hline
        $4$   & $0, 1, 3, 5, 9', 11', 12'$     \\
        $4'$  & $2', 6', 7, 8, 10, 13$          \\ \hline
        $5$   & $0, 1', 2', 4, 6, 8, 12'$      \\
        $5'$  & $3', 7', 9, 10, 11, 13$        \\ \hline
        $6$   & $0, 1', 2', 3, 5, 7, 11'$      \\
        $6'$  & $4', 8', 9, 10, 12, 13$        \\ \hline
        $7$   & $0, 1', 3', 4', 6, 8, 10$      \\
        $7'$  & $2, 5', 9', 11, 12, 13$        \\ \hline
        $8$   & $0, 2', 3', 5, 7, 9$           \\
        $8'$  & $1, 4', 6', 10', 11, 12, 13$   \\ \hline
        $9$   & $0, 3', 5', 6', 8, 10, 12$     \\
        $9'$  & $1, 2, 4, 7', 11', 13$         \\ \hline
        $10$  & $0, 4', 5', 6', 7, 9, 11$      \\
        $10'$ & $1, 2, 3, 8', 12', 13$         \\ \hline
        $11$  & $0, 1, 5', 7', 8', 10, 12$     \\
        $11'$ & $2', 3, 4, 6, 9', 13$          \\ \hline
        $12$  & $0, 2, 5, 6', 7', 8', 9, 11$   \\
        $12'$ & $1', 3, 4, 10', 13$            \\ \hline
        $13$  & $0,1,2,3,4,5,6,7,8,9,10,11,12$
      \end{tabular} \newline 
    \end{enumerate}
    
    Thus we have constructed an embedding of a $J(14, 2)$ on the triple-torus which by Lemma~\ref{lem:complete-empire-graphs-number-colours} requires 14 colours. Thus we have that $h_{3,2} \geq 14$ but by Theorem~\ref{thm:genus-upper-bound} $h_{3,2} \leq 14$ and thus $h_{3,2} = 14$ as required.
  \end{proof}

  \chapter{Conclusion} \label{chp:conclusion}

\section{Recap of Results Covered}
  There are four main results which we have discussed in this report:
  \begin{enumerate}
    \item \textbf{The Four Colour Theorem:} proven by Appel and Haken using computers in 1976
    \item \textbf{The Four Colour Theorem:} proven by Ringel and Youngs in 1968
    \item \textbf{Heawood’s Conjecture for Empire Maps in the Plane:} proven by Jackson and Ringel in 1984 with a constructive proof by Wessel in 1997 which is comprehensively reviewed in Chapter~\ref{chp:empire-conjecture}
    \item \textbf{Heawood’s Conjecture for Empire Maps on the Torus:} proven by Wessel in 1997
  \end{enumerate}

  These four results (and some trivial corollaries) currently hold the entirety of what we know about colouring empire maps on orientable surfaces.
  
  In this report we have introduced some mathematical grounding about such areas and discussed briefly some of the mathematics underpinning some of the key results.

  One thing to note when discussing this area is that so much work was originally done by Heawood in his paper of 1890 which is not easily accessible. Thus many of the results which are known and quoted must be taken as read with his particular methods not always known. Also for the smaller results original proofs must be found, as was largely the case in the smaller results of Chapter~\ref{chp:torus}.

  Also particularly important was the completion of the proof by Hutchinson. A result which was again originally proven by Heawood, but with Hutchinson leaving a key part as an exercise, and without Heawood’s paper to fall back on, an original proof had to be found. Thus there were 3 main bits of original work in this report:
  \begin{enumerate}
    \item Proof of an essential claim (left as an exercise) in a proof by Hutchinson. Proved in Section~\ref{sec:proof-of-claim}.
    \item Proof of a previously unknown case towards Heawood’s Empire Conjecture, specifically, 2-pire maps on the triple torus. Proved in Section~\ref{sec:new-proof}.
    \item Introduction of slack and proof that slack is always non-zero for $J(m, n)$ graphs. Proved in the appendix.
  \end{enumerate}
  
  I will conclude this section by making a few comments about reading around the subject. Firstly, since most of the work completed in this area (Heawood excluded!) was completed relatively recently, it is freely available online.

  Following on from this, my second comment is that the terminology in this area is not standardised. In each paper you will see different terminology for the same objects. In particular, the definitions of empire maps and empire graphs as given in this report (as triples, see Definition~\ref{def:empire-map}) were original definitions. The underlying object is a common one but rarely rigourously defined. Also the
concept of a collapsed empire graph in some places, e.g. in Hutchinson \cite{hutchinson}, is just referred to as an empire graph. So be aware when reading each paper that the terminology may vary.

  Lastly, as has been mentioned before in the report many of the results also hold for non-orientable surfaces. However, similar to the original proofs by Ringel and others, they are not constructive and so provide little guidance on constructing the limiting graphs.

\section{Unanswered Questions}
  The area of map colouring is still a reasonably fertile area of mathematics. Most obviously there are still an infinite number of unknown cases towards Heawood's Empire Conjecture. Further single results can be found in this area as was done in this report using techniques outlined in the Appendix. However a general result could be difficult, certainly by producing graphs, due to the irregularity of the upper bound, arising from the floor function. If one wanted to find a general result they might have to simplify the upper bound first, in a way similar to that in Lemma~\ref{lem:simplify-h-higher-genus}.

  Thus I believe there are a few main directions for further research:
  \begin{enumerate}
    \item Find a simpler proof to the Four Colour Theorem which doesn’t rely on computers
    \item Complete some more unknown cases towards Heawood’s Empire Conjecture
    \item Find a general solution of Heawood’s Empire Conjecture
    \item Investigate other extensions, e.g. on disconnected surfaces, as mentioned in Section~\ref{sec:history}
  \end{enumerate}

  \appendix
  \chapter{Investigative Tools} \label{app:tools}

This appendix contains some of the wisdom which I have gleaned whilst investigating this project and is aimed at those looking to extend the work in a similar direction to the one outlined. It is primarily centered around looking for $J(a, b)$ graphs which prove unknown cases of Conjecture~\ref{con:empire-genus-equality}.
  
\section{A Special Graph}
  We start by going back a step; before settling on the idea of $J(a, b)$ graphs one might naturally (as I did) first think about a more specific type of graph, namely complete uniform graphs, as defined in Section~\ref{sec:intro-empire-maps}. The conditions are stricter than $J(a, b)$ graphs:
  \begin{definition}
    A \textbf{complete uniform $m$-pire graph on $n$ empires}, is a complete $m$-pire graph, $(G, A, h)$, with the following extra conditions:
    \begin{enumerate} [(a)]
      \item There are exactly $nm$ vertices in $G$. Or equivalently, each vertex empire contains exactly $m$ vertices
      \item For any two vertex empires, there exists \emph{uniquely} a pair of vertices, one from each empire, which are adjacent
    \end{enumerate}
    For ease of notation these graphs shall be denoted $\bar{J}(n, m)$ graphs
  \end{definition}
  
  \begin{remark}
    Note that these $\bar{J}(n, m)$ graphs are still a property of graphs as in the case with $J(n, m)$ graphs but just much stricter conditions. In fact, two distinct $\bar{J}(n, m)$ graphs can only differ very slightly, namely one can alter the degree of each vertex within an empire. Although the total degree of all vertices in one empire must always stay the same.
  \end{remark}
  
  We start by producing some very simple lemmas about these graphs:
  \begin{lemma} \label{lem:complete-uniform-counting}
    A $\bar{J}(n, m)$ graph contains exactly $nm$ vertices and exactly $\frac{1}{2}n(n - 1)$ edges.
  \end{lemma}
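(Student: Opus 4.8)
The plan is to treat the two counts separately, since each follows almost directly from the defining conditions of a $\bar{J}(n,m)$ graph. The vertex count is immediate: condition (a) in the definition of a complete uniform $m$-pire graph already asserts that there are exactly $nm$ vertices (equivalently, each of the $n$ vertex empires contains exactly $m$ vertices). Since $h$ is a function, every vertex lies in exactly one empire, so the empires partition the vertex set and summing over the $n$ empires confirms the total of $nm$ vertices. There is nothing to do here beyond quoting the definition.

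For the edge count, I would show that the edges of $G$ are in bijection with the unordered pairs of distinct empires. First I would classify each edge by the empires of its two endpoints. By condition (b) of the definition, for every unordered pair of distinct empires there exists \emph{uniquely} an adjacent pair of vertices, one drawn from each empire; that is, exactly one edge joins each pair of empires. Existence is in fact guaranteed twice over, since a $\bar{J}(n,m)$ graph is in particular a complete $m$-pire graph (Definition~\ref{def:complete-empire-graph}), in which every two empires are neighbouring. The number of unordered pairs of distinct empires among the $n$ empires is $\binom{n}{2} = \tfrac{1}{2}n(n-1)$, which yields the claimed total \emph{provided} no edge has both of its endpoints in a single empire.

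The one point that needs care—and the main (if minor) obstacle—is ruling out intra-empire edges, which condition (b) does not explicitly forbid. I would handle this by appealing to the intended role of these graphs: a $\bar{J}(n,m)$ graph is the uniform analogue of $K_n$ in which each empire behaves as a single ``super-vertex'', so edges joining two vertices of the same empire are excluded (they are precisely the edges that would become loops, and hence be deleted, on passing to the collapsed empire graph of Definition~\ref{def:collapsed-empire-graph}, and they play no role in colouring since same-empire vertices always receive the same colour). With intra-empire edges absent, the classification of edges by endpoint-empires is exhaustive, the correspondence between edges and unordered pairs of distinct empires is a genuine bijection, and the edge count is exactly $\binom{n}{2} = \tfrac{1}{2}n(n-1)$, as required.
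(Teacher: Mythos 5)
Your proof is correct and follows essentially the same route as the paper: the vertex count is read off from condition (a), and the edge count comes from the bijection between edges and unordered pairs of distinct empires guaranteed by the uniqueness in condition (b), giving $\binom{n}{2} = \tfrac{1}{2}n(n-1)$ (the paper phrases this as $n(n-1)$ with a double-counting correction). Your extra step ruling out intra-empire edges addresses a point the paper's proof silently assumes, and your reading matches the paper's intent, since the appendix later lists ``allowing edges between two vertices in the same vertex empire'' as a \emph{relaxation} of the $\bar{J}(n,m)$ conditions.
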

  \begin{proof}
    The number of vertices is given in the definition. 
    
    We can calculate the number of edges by noting that condition (b) requires that there is exactly one edge between any two vertex empires, thus since each of the $n$ vertex empires is joined to each of the other $(n - 1)$ vertex empires we get $n(n - 1)$ edges, although we have clearly double counted giving us $\frac{1}{2} n(n - 1)$ edges.
  \end{proof}

\section{Slack}
  So we have a special subset of $J(n, m)$ graphs and as such can be seen as a base point from which all other $J(a, b)$ graphs can be obtained, the question being how much can they be changed? One way to explore this question is through a concept I called slack which is based around the result in Lemma~\ref{lem:simple-graph-three-sides}. The idea is that if we can embed a simple graph on a surface then we have that $2E \geq 3C$, but exactly how much bigger:
  \begin{definition}
    For a simple graph $G$ embedded on a surface $S$ we define the \textbf{slack}, $s(G, S)$ as:
    \begin{displaymath}
      s(G, S) = 2E - 3C
    \end{displaymath}
    where $E$ and $C$ are the number of edges and countries in the embedding of $G$ respectively.
  \end{definition}
  
  \begin{example}
    One of the benefits of considering slack is to show that a graph \textbf{cannot} be embedded in a surface $S$. For example we do like we did for proving $K_5$ isn't planar and assume that a graph $G$ \textbf{can} be embedded on a surface $S$. Then we calculate the slack $s = s(G, S)$, then if $s < 0$ we know that $G$ can't be embedded in $S$. Note that if $s \geq 0$ this doesn't guarantee that $G$ can be embedded.
  \end{example}
  
  So returning back to the problem of proving cases towards Conjecture~\ref{con:empire-genus-equality}. We consider the following scenario; let $g$ and $m$ both be fixed and let $h$ be defined as:
  \begin{displaymath}
    h = \left\lfloor \frac{6m + 1 + \sqrt{(6m + 1)^2 + 24(2g - 2)}}{2} \right\rfloor
  \end{displaymath}
  then what we would like to do is embed $\bar{J}(h, m)$ on a surface $S$ of genus $g$. So let's assume that we can embed a $\bar{J}(h, m)$ graph on $S$, what is the slack? Is it ever going to be less than zero? Well we can show that it is always non-negative:
  \begin{theorem}
    For a surface $S$ of genus $g > 0$ and for fixed $m \geq 1$, if we let $h$ be defined as:
    \begin{displaymath}
      h = \left\lfloor \frac{6m + 1 + \sqrt{(6m + 1)^2 + 24(2g - 2)}}{2} \right\rfloor
    \end{displaymath}
    if we let $G$ be a $\bar{J}(h, m)$ graph then $s(G, S) \geq 0$.
  \end{theorem}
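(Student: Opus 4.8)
The plan is to turn the inequality $s(G,S) \geq 0$ into a quadratic inequality in $h$ and then to observe that $h$ sits between the two roots of that quadratic. The whole argument is essentially the derivation of Heawood's bound read in reverse.

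First I would use Lemma~\ref{lem:complete-uniform-counting} to record that a $\bar{J}(h,m)$ graph has $V = hm$ vertices and $E = \tfrac{1}{2}h(h-1)$ edges. Since $S$ is an orientable surface of genus $g$ its Euler characteristic is $\chi(S) = 2 - 2g$, so Euler's formula determines the number of countries as
\[
C = \chi(S) - V + E = (2 - 2g) - hm + \tfrac{1}{2}h(h-1).
\]
Substituting $E$ and $C$ into $s(G,S) = 2E - 3C$ and clearing denominators, a routine simplification yields
\[
2\,s(G,S) = -h^2 + (6m+1)h + 12g - 12,
\]
so that $s(G,S) \geq 0$ is equivalent to the quadratic inequality
\[
h^2 - (6m+1)h + (12 - 12g) \leq 0.
\]

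The key step is to recognise the roots of the left-hand side. Since $-4(12 - 12g) = 24(2g - 2)$, the quadratic formula gives the two roots
\[
h_{\pm} = \frac{(6m+1) \pm \sqrt{(6m+1)^2 + 24(2g - 2)}}{2},
\]
and the larger root $h_{+}$ is exactly the real number whose floor defines $h$. As the parabola opens upwards, it suffices to show $h_{-} \leq h \leq h_{+}$. The upper bound is immediate, since $h = \lfloor h_{+} \rfloor \leq h_{+}$. For the lower bound I would use the hypothesis $g > 0$, i.e.\ $g \geq 1$: then $24(2g - 2) \geq 0$, so $\sqrt{(6m+1)^2 + 24(2g-2)} \geq 6m + 1$, which forces $h_{-} \leq 0$; the same inequality gives $h_{+} \geq 6m + 1 \geq 7$, so $h = \lfloor h_{+} \rfloor \geq 6m + 1 > 0 \geq h_{-}$. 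Hence $h$ lies between the two roots, the quadratic inequality holds, and therefore $s(G,S) \geq 0$.

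I do not expect a genuine obstacle: the only real content is the observation that the expression defining $h$ is precisely the upper root of the quadratic produced by the slack computation, after which the floor function supplies the upper bound for free and the condition $g \geq 1$ supplies the lower one. The one point deserving a word of care is the use of Euler's formula for $C$, which presumes the $\bar{J}(h,m)$ graph is embedded so that every country is a disc; this is the natural reading of the slack under which the quantity is being computed, and is consistent with the way slack was used earlier to test embeddability (for instance for $K_5$ in Theorem~\ref{thm:K5-not-planar}).
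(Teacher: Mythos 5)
Your proposal is correct and is essentially the paper's own argument: both use Lemma~\ref{lem:complete-uniform-counting} together with Euler's formula to reduce $s(G,S)\geq 0$ to the quadratic $h^2-(6m+1)h-6(2g-2)\leq 0$, identify its roots as $h_{\pm}=\frac{1}{2}\left(6m+1\pm\sqrt{(6m+1)^2+24(2g-2)}\right)$, and conclude from $g>0$ (giving $h_{-}\leq 0$) and $h=\lfloor h_{+}\rfloor\leq h_{+}$ that $h$ lies between the roots. The only difference is presentational: the paper phrases this as a proof by contradiction starting from $s<0$, whereas you argue directly, and your closing remark about Euler's formula presupposing disc countries matches the paper's standing assumption on combinatorial surfaces.
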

  \begin{proof}
    Throughout this proof we let $C, E, V$ be the counties, edges and vertices in the embedding of $G$ respectively. Then the slack is $s = s(G, S) = 2E - 3C$ so we use proof by contradiction and assume that for some value of $g$ and some value of $m$ we have $s = 2E - 3C < 0$.
    
    Now we can use $G$ to calculate the Euler characteristic of $S$:
    \begin{displaymath}
      \begin{aligned}
        \chi(S) &= C - E + V \\
        \Rightarrow C &= E - V + \chi(S) \\
        \Rightarrow 3C &= 3E - 3V + 3\chi(S) \\
        \Rightarrow 3C &= 3E - 3V + 3(2 - 2g)
      \end{aligned}
    \end{displaymath}
    
    Then we can combine with the assumption to get:
    \begin{displaymath}
      2E - 3C = 2E - (3E - 3V + 3\chi(S)) = 3V - 3(2 - 2g) - E < 0
    \end{displaymath}
    
    But we know from Lemma~\ref{lem:complete-uniform-counting} how many vertices and edges there are, so substituting in we get:
    \begin{displaymath}
      3V + 3(2g - 2) - E = 3hm + 3(2g - 2) - \frac{1}{2}h(h-1) < 0
    \end{displaymath}
    
    We can then multiply through by -2 and we get a quadratic in $h$:
    \begin{displaymath}
      \begin{aligned}
        \Rightarrow h(h-1) - 6hm - 6(2g - 2) &> 0 \\
        \Rightarrow h^2 - h(6m + 1) - 6(2g - 2) &> 0 \\
        \Rightarrow (h - h_1)(h - h_2) &> 0
      \end{aligned}
    \end{displaymath}
    with $h_1$ and $h_2$ as follows:
    \begin{displaymath}
      h_1 = \frac{1}{2} \left( 6m + 1 - \sqrt{(6m + 1)^2 + 24(2g - 2)} \right)
    \end{displaymath}
    \begin{displaymath}
      h_2 = \frac{1}{2} \left( 6m + 1 + \sqrt{(6m + 1)^2 + 24(2g - 2)} \right)
    \end{displaymath}
    Then since the whole thing must be positive we know that the two factors must both be positive or both be negative. Then since we are considering $g > 0$ we get:
    \begin{displaymath}
      \begin{aligned}
        g > 0 &\Rightarrow 2g - 2 \geq 0 \\
        &\Rightarrow (6m + 1)^2 + 24(2g - 2) \geq (6m + 1)^2  \\
        &\Rightarrow h_1 = 6m + 1 - \sqrt{(6m + 1)^2 + 24(2g - 2)} \leq 0
      \end{aligned}
    \end{displaymath}
    So we get that $(h - h_1) \geq h > 0$ so we know that one factor is strictly positive and so the other factor must be strictly positive as well. But we have that $h = \lfloor h_2 \rfloor$ and so we get $h \leq h_2$ so $(h - h_2) \leq 0$ which isn't strictly positive and so we get a contradiction so $2E - 3C = s \geq 0$ as required.
  \end{proof}
  
  \begin{remark}
    The very important thing to note is that this does not say that we can always embed a necessary $\bar{J}(h, m)$ as this would have proven the conjecture for all cases! It merely says that one possible argument for showing it is impossible doesn't work!
  \end{remark}
  \begin{remark}
    One can also note that we only considered $g > 0$, the case $g=0$, the sphere, could be considered separately and one could also show that in this case the slack is always non-negative but since Corollary~\ref{cor:summary-known-results} has completely settled the case $g=0$ this seems a slightly pointless exercise!
  \end{remark}
  \begin{remark}
    From this proof we can also see that the slack is exactly zero if and only if $h = h_2$ i.e. $h$ is an integer so for example in the case $g=1$ we know that $h = 6m + 1$ and so the for all values of $m$ is zero.
  \end{remark}
  
\section{Using Slack}
  So we can continue our search for some $J(n, m)$ graph with a little more optimism! We have shown that there is always a possibility that we can embed some $\bar{J}(h, m)$ but what about a more general $J(h, m)$ graph. Well before I mentioned that $\bar{J}(h, m)$ was like a base graph and so we can obtain all the other $J(h, m)$ graphs by relaxing restrictions placed on $\bar{J}(h, m)$ such as:
  \begin{enumerate}
    \item Allowing fewer than $m$ vertices in each vertex empire
    \item Allowing more than one edge between any two vertex empires
    \item Allowing edges between two vertices in the same vertex empire
  \end{enumerate}
  These are all valid, however, the bottom two are slightly trivial for the following reasons. The second option of allowing more than one edge between any two vertex empires is counter-productive. Adding an extra edge also creates one extra country so the slack decreases by 1 so it gives you less freedom and gives you nothing you didn’t have before so there is not a lot of point considering this possibility.

  Considering the third option; if we allow the addition of edges between two vertices in the same vertex empire this is equivalent to letting two countries on the same empire neighbour each other. But this is the same as having one larger country, or in this case identifying the two vertices which is equivalent to the first change above.

  Thus we are left with one potential change, to have fewer than $m$ vertices in any one empire. What effect does this have on the slack? Well the number of edges stays the same as each empire must still be connected to every other empire. Similarly the surface doesn't change to if we look at the following formula from above:
  \begin{displaymath}
    C = E - V + \chi(S)
  \end{displaymath}
  we can see that removing one vertex means that the number of countries increases by one. Thus $2E - 3C$ will decrease by 3.
  
  So how does this help. Well we can notice a few things:
  \begin{enumerate}
    \item If $s(G, S) = 0$ then we have a triangular graph, i.e. every country is bounded by exactly 3 edges
    \item Thus if $s(G, S) = 2$ then we have one country with 5 bounding edges, or two with 4 bounding edges and the rest all triangular
    \item If $s(G, S) < 3$ you cannot reduce the number of vertices
    \item If $3 \leq s(G, S) < 6$ you can remove at most one vertex, etc.
  \end{enumerate}
  
  Thus we can learn something about the graph we haven't yet produced and know that we can exclude certain possibilities.
  \begin{example}
    The slack in the case $g=3, m=2$ is 5 and in the graph which proves the case (Figure~\ref{fig:proof-g3-m2}) we have one fewer vertex than you would get in $\bar{J}(14, 2)$. This reduces the slack to just 2 and you can see that there is indeed one country which has 5 bounding edges.
  \end{example}
  \begin{example}
    The slack in the case $g=3, m=3$ is only 2. Thus a similar approach of having a central vertex which is the only one in it's empire and is connected to all others in a ``wheel" will not work in this case. In fact every empire must have the full 3 vertices. Also we know that there must be 1 or 2 countries which will be non-triangular but that most of the countries will have to be triangular.
  \end{example}
  
  In general however, these graphs can be difficult to generalise and searching for them can be tedious. Hopefully with these tools the job might be a little quicker. Good luck!


\end{document}